\definecolor{MyLinkColor}{rgb}{0,0,0.4}
\numberwithin{equation}{section}
\newcommand{\re}{\mathop{\rm Re}\nolimits}
\newcommand{\PV}{\mathop{\rm PV}\nolimits}
\newcommand{\0}{\Omega}
\newcommand{\e}{\varepsilon}
\newcommand{\p}{\partial}
\newcommand{\wt}{\widetilde}
\newcommand{\ov}{\overline}
\newcommand{\wh}{\widehat}
\newcommand{\oo}{\ov\omega}
\newcommand{\bA}{\mathbb{A}}
\newcommand{\bX}{\mathbb{X}}
\newcommand{\B}{\mathcal{B}}
\newcommand{\bB}{\mathbb{B}}
\newcommand{\cO}{\mathcal{O}}
\newcommand{\kF}{\mathcal{F}}
\newcommand{\kH}{\mathcal{H}}
\newcommand{\kL}{\mathcal{L}}
\newcommand{\C}{\mathbb{C}}
\newcommand{\R}{\mathbb{R}}
\newcommand{\N}{\mathbb{N}}
\DeclareMathOperator{\supp}{supp}
\newtheorem{thm}{Theorem}[section]
\newtheorem{prop}[thm]{Proposition}
\newtheorem{lemma}[thm]{Lemma}
\newtheorem{rem}[thm]{Remark}
\numberwithin{equation}{section}
\title[The Muskat problem in 2D]{Viscous displacement in porous media: \\the Muskat problem in 2D}
\author[B.--V. Matioc]{Bogdan--Vasile Matioc}
\address{Institut for applied mathematics, Leibniz University Hanover, Welfengarten~1, 30167 Hanover, Germany.}
\email{matioc@ifam.uni-hannover.de}
\subjclass[2010]{35R37; 35K59; 35K93;  35Q35; 42B20}
\keywords{Muskat problem; Rayleigh-Taylor condition; Surface tension; Singular integral operator}
\begin{document}
  
\begin{abstract}
We consider the Muskat problem describing the viscous displacement  in a two-phase fluid system located in an unbounded two-dimensional porous medium or Hele-Shaw cell. 
\mbox{After} formulating the mathematical model as an evolution problem for the sharp interface between the fluids, 
we show that Muskat problem with surface tension is a quasilinear parabolic problem, whereas, in the absence of surface tension effects, 
the Rayleigh-Taylor condition identifies a domain of parabolicity  for the fully nonlinear problem.
Based upon these aspects, we then establish  the local well-posedness for arbitrary large initial data in $H^s$, $s>2$, 
if surface tension is taken into account, respectively for arbitrary large initial data in $H^2$ that additionally satisfy
the Rayleigh-Taylor condition if surface tension effects are neglected. 
We also show that the problem exhibits the parabolic smoothing effect  and we provide criteria for the global existence of solutions. 
\end{abstract}

\maketitle

\tableofcontents

\section{Introduction and main results}\label{Sec0}
The Muskat problem is a model proposed by M. Muskat in \cite{Mu34} to describe the encroachment  of water into an oil sand.
This problem is related to the secondary phase of the oil extraction process  where  water   injection is sometimes used to increase the pressure in the oil reservoir and to drive the oil towards the extraction well. 
In this paper we consider an unbounded fluid system,  consisting of two immiscible and incompressible fluid phases,   which moves with constant speed $|V|\geq0$, either in a  horizontal or a vertical Hele-Shaw cell (or a homogeneous porous medium). 
Furthermore, we assume that the flows are two-dimensional and that the velocities are asymptotically  equal to $(0,V)$ far away from the origin.
In a reference frame which moves with the same speed as the fluids and in the same direction, 
the Muskat problem can be formulated as an evolution problem for the pair $(f,\ov\omega)$, where  $[y=f(t,x)+Vt]$ is a parametrization for the sharp interface that separates  the fluids, with $f$ asymptotically flat for large $x\in\R$, 
and $\ov\omega/\sqrt{1+f'^2}$ is the jump of the velocity at the free interface in 
tangential direction (see \eqref{vor}).
Mathematically,  we are confronted  with the following    evolution problem
\begin{subequations}\label{P}
\begin{equation}\label{P:1}
\left\{
\begin{array}{rlll}
 \displaystyle\p_tf(t,x)\!\!&=&\!\! \displaystyle\frac{1}{2\pi}\PV\int_\R\frac{y+f'(t,x)(f(t,x)-f(t,x-y))}{ y^2+(f(t,x)-f(t,x-y))^2 }\ov\omega(t,x-y)\, dy,\quad \text{$ t>0$, $x\in\R$},\\[2ex]
f(0)\!\!&=&\!\! f_0,
\end{array}
\right.
\end{equation} 
 where $f$ and $\ov\omega$  are additionally coupled  through the following relation
\begin{equation}\label{P:2}
\begin{aligned}
 &\Big[\sigma\kappa(f)-\Big(g(\rho_--\rho_+)  +\frac{\mu_--\mu_+}{k}V\Big)f\Big]'(t,x) \\[1ex]
 &\hspace{1cm} =\displaystyle\frac{\mu_-+\mu_+}{2k}\ov\omega(t,x) + \displaystyle\frac{\mu_--\mu_+}{2\pi k} \PV\int_\R\frac{ yf'(t,x)-(f(t,x)-f(t,x-y)) }{ y^2+(f(t,x)-f(t,x-y))^2 }\ov\omega(t,x-y)\, dy 
\end{aligned}
\end{equation}
for $ t>0$ and $x\in\R$.
\end{subequations}
We denote by  $(\,\cdot\,)'$ the spatial derivative  $\p_x,$ 
  $g$ is the Earth's gravity, $k$ is   the permeability of the homogeneous porous medium, $\sigma$ is the surface tension coefficient at the free boundary,  
  $\rho_\pm$ is the density and   $\mu_\pm$  the viscosity of the fluid located at $\0_\pm(t)$,  where
\[ \0_-(t):=[y<f(t,x)+Vt]\qquad\text{and}\qquad \0_+(t):=[y>f(t,x)+Vt].\]
Moreover,  $\kappa(f(t))$ is the curvature of the graph $[y=f(t,x)+tV]$ and
$\PV$ denotes the principal value which, depending on the regularity of the functions under the integral, is taken at zero and/or infinity.
If    $V$ is positive, then the fluid  $-$ expends into the region occupied by the fluid $+$  and vice versa, if $V$ is negative, then the fluid  $+$ expends into the region occupied by the fluid $-$ 
(see Section \ref{Sec1} for rigorous a derivation of \eqref{P}). 
When neglecting  surface tension effects we set $\sigma=0$ and we require that the first equation of \eqref{P:1} and the equation \eqref{P:2} hold also at $t=0$.

In the recent years the Muskat problem has received, due to its physical relevance, much attention especially in the field of applied mathematics.
In the absence of surface tension effects the local  existence of solutions has been first addressed by F. Yi in \cite{Y96} under the assumption that the Rayleigh-Taylor condition holds.
The Rayleigh-Taylor condition \cite{ST58} is a sign restriction on the jump of the pressure gradients in normal direction at the interface  $[y=f_0(x)]$, and it reads
\begin{align}\label{RT}
 \p_\nu p_-< \p_\nu p_+ \qquad\text{on $[y=f_0(x)]$,}
\end{align}
where  $p_\pm$ is the pressure of the fluid $\pm$ and  $\nu$ the outward normal at $[y=f_0(x)]$ with respect to $\0_-(0)$.
Thereafter, questions related to the well-posedness of the Muskat problem  and other qualitative aspects of the  dynamics have been studied in
\cite{A04, BCG14, BCS16, BS16x, CGSV15x, CCG11, CCG13b, CG07, CG10, CGO14, CGCSS14x, SCH04, BV14, EMM12a, EM11a,  EMW15, M16x, CCGS13, GB14, CCFG13, CCFGL12, GG14, CGFL11,    GS14} in several physical scenarios and with 
various  methods.
These references show the Rayleigh-Taylor condition is crucial in the analysis of this problem.   
In the regime where the Rayleigh-Taylor condition  holds with reverse inequality sign, for example if a less viscous fluid displaces a more viscous one, or when a more dense fluid sits on top of a less dense one, physical experiments evidence  
the occurrence of  viscous fingering, cf. \cite{H87, ST58}, and the Muskat problem is ill-posed, cf. e.g. \cite{SCH04, EM11a, CG07}. 
On the other hand, it  was recently shown in \cite{EMW15}, in a  bounded and periodic striplike geometry, that the Rayleigh-Taylor condition actually
identifies   a domain of parabolicity for the Muskat problem.

When surface tension effects are taken into consideration, it was proven in \cite{EMW15, PS16, PS16x}, in bounded geometries, that the Muskat problem is a quasilinear parabolic problem for arbitrary large initial data, 
without  any kind of restrictions.
Also in this setting, the solvability of the problem has been addressed in several physical scenarios with quite intricate methods \cite{A14, FT03, HTY97,EMM12a, EM11a, T16}.

The first goal of this paper is to    prove that the classical formulation of the Muskat problem, see Section \ref{Sec1}, is equivalent to  the evolution problem \eqref{P}, cf. Proposition~\ref{PE}. 

Our second goal   is to extend the methods that have been recently applied in \cite{M16x}, in the particular case of fluids with equal viscosities, 
to the general case considered herein in order to establish the 
local well-posedness for the Muskat problem with and without surface tension by similar strategies and in a very general context.
If the fluids have equal viscosities, the equation  \eqref{P:2}  determines $\oo$ as a function of $f$,  and \eqref{P} becomes an evolution problem for $f$ only. 
Surprisingly, the analysis in  \cite{M16x} shows that the corresponding evolution problem is of  quasilinear parabolic type
in both regimes, that is for $\sigma>0$, or when $\sigma=0$ and the Rayleigh-Taylor condition holds.
However, for $\mu_-\neq \mu_+$, the equation \eqref{P:2} is  implicit and this fact enhances the nonlinear and nonlocal character of the problem and makes the analysis more involved.

In the case when $\sigma=0$ and the Rayleigh-Taylor condition holds, the  well-posedness of the problem is still an open question. 
Local existence of solutions to \eqref{P} has been first addressed  in \cite{CCG11} for arbitrary large  data in $H^3$, 
and in three space dimensions in \cite{CCG13b} for initial data in $H^4$. 
Global existence is established in  \cite{SCH04}  in the periodic case and for small initial data.
Quite recently, the authors of \cite{BCS16} have proven the existence and uniqueness of solutions which satisfy an additional energy estimate  for initial data in $H^2$ which are small with respect to some $H^{3/2+\e}$-norm.   
In Theorem~\ref{MT2} we show that the Muskat problem without surface tension is well-posed for arbitrary large initial data in $H^2$.
To achieve this result we formulate \eqref{P} as a fully nonlinear  evolution problem for $f$ and we prove that the set of initial data for which the Rayleigh-Taylor condition holds defines, also in this geometry,
a domain of parabolicity for the Muskat problem. 
It is worth emphasizing that the quasilinear character, present for $\mu_-=\mu_+$, is not preserved when $\mu_-\neq\mu_+$ and this makes the Muskat problem without surface tension more difficult to handle.

For $\sigma>0$, the local well-posedness of \eqref{P} has been addressed in \cite{A14} for initial data in $H^s$, with $s\geq6$ (see also \cite{T16} for a global existence result for small initial data in $H^s$, with $s\geq6$).
Exploiting the quasilinear structure of   the curvature term,  we show that in this regime \eqref{P} can be formulated  as a quasilinear parabolic evolution problem.
This property enables us to establish the local 
well-posedness of \eqref{P} for arbitrary large initial in $H^s$, with $s>2$, cf. Theorem~\ref{MT1}. 
In particular, we may chose the initial data such that the curvature is unbounded or discontinuous.

Moreover, we show that the Muskat problem features the effect of parabolic smoothing: solutions (which possess additional regularity when $\sigma=0$) become instantly real-analytic in the time-space domain.
Besides, we provide criteria for the global existence of solutions.

The first main result of this paper is the following theorem.
\begin{thm}[Well-posedness: with surface tension]\label{MT1}
Let $\sigma>0$.
The problem \eqref{P} possesses  for each   $f_0\in H^s(\R)$,  $s\in(2,3),$ a unique maximal solution
\[ f:=f(\,\cdot\,; f_0)\in C([0,T_+(f_0)),H^s(\R))\cap C((0,T_+(f_0)), H^3(\R))\cap C^1((0,T_+(f_0)), L_2(\R)), \] 
with $T_+(f_0)\in(0,\infty]$, and $[(t,f_0)\mapsto f(t;f_0)]$ defines a  semiflow on $H^s(\R)$. Additionally, if   
\[
\sup_{[0,T_+(f_0))\cap[0,T]}\|f(t)\|_{H^s}<\infty\qquad\text{for all $T>0$},
\]
then  $T_+(f_0)=\infty$.
 Moreover, given $k\in\N$, it holds that
 \[
 f\in C^\omega((0,T_+(f_0))\times\R,\R)\cap C^\omega ((0,T_+(f_0)), H^k(\R))\footnote{Here and in the following  $C^\omega$ stands  for real-analyticity, while $C^{1-}$ denotes local Lipschitz continuity.}.
 \] 
  In particular, $f(t,\,\cdot\,)$ is real-analytic for each $t\in(0,T_+(f_0)).$
 \end{thm}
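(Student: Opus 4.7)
The plan is to reduce \eqref{P} to an autonomous quasilinear parabolic evolution equation for $f$ alone and then invoke abstract maximal-regularity theory. First, I would treat \eqref{P:2} as an implicit equation determining $\oo$ as a function of $f$. Writing it in the form $A(f)\oo=(\sigma\kappa(f)-\alpha f)'$ with $A(f):=\frac{\mu_-+\mu_+}{2k}\mathrm{Id}-\frac{\mu_--\mu_+}{2\pi k}\mT(f)$ and $\mT(f)$ the singular integral appearing on the right-hand side of \eqref{P:2}, I would prove that $A(f)$ is an isomorphism of $H^{s-2}(\R)$ by combining Calder\'on-commutator type bounds $\|\mT(f)\|_{\mathcal{L}(H^{s-2})}\leq C\|f\|_{H^s}$ for $s\in(2,3)$ with the strict inequality $|\mu_--\mu_+|<\mu_-+\mu_+$ via a Neumann-series argument. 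This yields $\oo=\oo(f)\in H^{s-1}(\R)$ depending real-analytically on $f\in H^s(\R)$.

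Second, substituting $\oo(f)$ into \eqref{P:1} produces an evolution equation $\p_tf=\Phi(f)$ whose quasilinear structure hides in the curvature term. After inverting $A(f)$ and isolating the highest-order contribution coming from $\sigma\kappa(f)=\sigma(f'/\sqrt{1+f'^2})'$, one finds $\Phi(f)=-\sigma\A(f)[f]+F(f)$ where $\A(f)$ has a principal symbol of order three in $\xi$ with positive coefficient $a(f,f')$ bounded away from zero on bounded sets of $H^s(\R)$, and $F(f)$ collects the lower-order terms. Freezing coefficients and Fourier-localizing, $-\A(f)\in\mathcal{L}(H^3(\R),L_2(\R))$ generates an analytic semigroup on $L_2(\R)$, uniformly for $f$ in bounded subsets of $H^s(\R)$, and $f\mapsto\A(f)$, $f\mapsto F(f)$ are locally Lipschitz into $\mathcal{L}(H^3(\R),L_2(\R))$ and $L_2(\R)$, respectively.

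Third, placing the problem in Amann's framework for quasilinear parabolic equations in the continuous interpolation class (or equivalently the Da Prato--Grisvard--Angenent setting), the abstract theorem yields a unique maximal solution with the stated regularity $C([0,T_+),H^s)\cap C((0,T_+),H^3)\cap C^1((0,T_+),L_2)$, the semiflow property, and the blow-up criterion: continuation is possible as long as $\|f(t)\|_{H^s}$ remains bounded on bounded time intervals. Note that no Rayleigh--Taylor restriction is needed here, since $\sigma>0$ supplies the ellipticity unconditionally.

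Finally, real-analyticity follows from the parameter trick of Escher and Simonett. Introduce the two-parameter family $f_{\lambda,\mu}(t,x):=f(\lambda t, x+\mu t)$ for $(\lambda,\mu)$ close to $(1,0)\in\R^2$, observe that it satisfies a quasilinear parabolic problem of the same form with an additional transport term $\mu f_{\lambda,\mu}'$ and a rescaling by $\lambda$, and apply the implicit function theorem to the corresponding fixed-point equation in a maximal-regularity Banach space. This yields analyticity of the solution map in $(\lambda,\mu)$; specialising to $\lambda=1,\mu=0$ gives joint real-analyticity in $(t,x)$, while a bootstrap using the smoothing of the analytic semigroup upgrades it to $C^\omega((0,T_+),H^k(\R))$ for every $k$. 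The main obstacle throughout is the implicit coupling between $f$ and $\oo$: the Calder\'on-commutator estimates for $\mT(f)$ and its derivatives must be pushed to the threshold $s\in(2,3)$, and one must carefully verify that after inverting $A(f)$ the surface-tension term still yields a genuine third-order elliptic principal part with positive symbol; once that is done, the rest is a clean application of well-developed abstract machinery.
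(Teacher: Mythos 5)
There is a genuine gap at the very first step of your scheme: the inversion of $A(f)=\tfrac{\mu_-+\mu_+}{2k}\,\mathrm{Id}-\tfrac{\mu_--\mu_+}{2\pi k}\,\mT(f)$ by a Neumann series. A Calder\'on-commutator bound only gives $\|\tfrac1\pi\mT(f)\|_{\kL}\leq C(\|f\|_{H^s})$, a quantity that grows with the size (in particular the slope) of $f$; the condition $|a_\mu|=\tfrac{|\mu_--\mu_+|}{\mu_-+\mu_+}<1$ therefore does \emph{not} make $\|a_\mu\,\tfrac1\pi\mT(f)\|_{\kL}<1$ unless $f$ is small. Consequently your argument would only produce the solution operator $\oo=\oo(f)$ (and hence the theorem) for small data, whereas the statement concerns arbitrary large $f_0\in H^s(\R)$. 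The paper's substitute for this step is not a smallness argument but a spectral one: writing the right-hand side of \eqref{P:2} through the operator $\bA(f)$ (the adjoint of the double layer potential), it is shown via the Biot--Savart representation of the velocity, the Plemelj formula and a Rellich identity (Theorem~\ref{T:I1}, upgraded to $H^1(\R)$ in Proposition~\ref{P:I2}) that $\lambda-\bA(f)$ is an isomorphism of $L_2(\R)$, respectively $H^1(\R)$, for \emph{all} real $\lambda$ with $|\lambda|\geq1$, uniformly on bounded sets of $f$, with no restriction on the size of $f$. Only then does $|a_\mu|<1$ give the solvability of \eqref{P:2}; this is the key ingredient your proposal is missing, and it cannot be replaced by a perturbation of the identity.

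Two further points, less central but worth noting. First, for $f_0\in H^s$ with $s\in(2,3)$ the datum $(\sigma\kappa(f)-\Theta f)'$ contains $f'''$ and does not lie in $L_2(\R)$, so one cannot simply solve \eqref{P:2} ``for $f$'' and substitute; the paper exploits the quasilinear splitting $\oo(f)[h]$ with $f\in H^2(\R)$ and $h\in H^3(\R)$ (Propositions~\ref{P1}--\ref{P1d}), which you gesture at but should make explicit, and the verification that $\bB(f)\circ(1+a_\mu\bA(f))^{-1}$ applied to the curvature term really produces, after localization, the multipliers $-c(\p_x^4)^{3/4}$ with $c>0$ is precisely the technical core (Theorem~\ref{TK1}); ``freezing coefficients'' here requires commuting cut-offs past $(1+a_\mu\bA(f))^{-1}$, which is where the uniform resolvent bound \eqref{DI} enters again. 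Second, the uniqueness claim in the stated class does not come for free from the abstract quasilinear theorem: one must show that every classical solution with the stated regularity is H\"older continuous into $H^2(\R)$ (relation \eqref{T:EEE2}), which in the paper requires additional duality estimates involving the adjoints $(\bA(f))^*$ and $(\bB(f))^*$; your proposal does not address this step.
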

 
 We emphasize that exactly the same result as in Theorem \ref{MT1} has been achieved in \cite{M16x} in the simpler case of fluids with equal viscosities.

  When surface tension is neglected, that is $\sigma=0$, we assume that 
  \begin{align}\label{TET}
   \Theta:=g(\rho_--\rho_+)  +\frac{\mu_--\mu_+}{k}V\neq0.
  \end{align}
  The situation when $\sigma=0=\Theta$  is special, because in this case the problem \eqref{P} possesses   for each $f_0\in H^s(\R)$, with $s>3/2$,  a unique global solution 
 $f(t):=f_0$ for all $t\in\R$, cf. Section \ref{Sec4}. The  corresponding flow is stationary with constant  velocities equal to $(0,V)$  and hydrostatic pressures.
 
In order to discuss the well-posedness of \eqref{P} with $\sigma=0\neq\Theta$, we introduce the set of initial data for which the Rayleigh-Taylor condition holds as
 \[
 \cO:=\{f_0\in H^2(\R)\,:\, \text{$\p_\nu p_-< \p_\nu p_+$ on $[y=f_0(x)]$}\}.
 \]
 The Rayleigh-Taylor condition is reformulated later on, cf. \eqref{RTT2}, where it is also proven that $\cO$ is an open subset of $H^2(\R)$. 
 Our analysis in Section \ref{Sec4} shows that $\cO$ is nonempty if and only if 
 \begin{align}\label{NEQ}
 \Theta >0.
 \end{align}
The relation \eqref{NEQ} is the classical condition found within the linear theory by Saffman and Taylor \cite{ST58}.  
In particular, if the flow takes place in a vertical Hele-Shaw cell and $V=0$, then the less dense fluid lies above.
For flows in   horizontal Hele-Shaw cells the effects due to gravity  are usually  neglected, that is $g=0$, and  \eqref{RT} implies
that  $V\neq0$ and that the more viscous fluid expends  into the region occupied by
the less viscous one.

We  now come to the second main result of this paper.
 \begin{thm}[Well-posedness: without surface tension]\label{MT2}
Let $\sigma=0$, $\mu_-\neq\mu_+$\footnote{Theorem~\ref{MT2} is still valid when $\mu_-=\mu_+,$ however in this case its assertions can be improved, cf.  \cite[Theorem~1.1]{M16x}.}, and assume that \eqref{NEQ} holds.
Given $f_0\in\cO$, the  problem \eqref{P} possesses  a   solution
\[ f\in C([0,T],\cO)\cap C^1([0,T], H^1(\R))\cap C^{\alpha}_{\alpha}((0,T], H^2(\R)) \] 
for some $T>0$ and an arbitrary  $\alpha\in(0,1)$.
It further holds:
\begin{itemize}
 \item[$(i)$] $f$ is the unique solution to \eqref{P} belonging to 
 \[\bigcup_{\beta\in(0,1)} C([0,T],\cO)\cap C^1([0,T], H^1(\R))\cap C^{\beta}_{\beta}((0,T], H^2(\R));\]
 \item[$(ii)$] $f$ may be extended to a maximally defined solution 
 $$f(\,\cdot\,; f_0)\in C([0,T_+(f_0)),\cO)\cap C^1([0,T_+(f_0)), H^1(\R))\cap \bigcap_{\beta\in(0,1)}C^{\beta}_{\beta}((0,T], H^2(\R))$$
 for all $T<T_+(f_0)$, where $T_+(f_0)\in (0,\infty];$
 \item[$(iii)$] The solution map   $[(t,f_0)\mapsto f(t;f_0)]$ defines a  semiflow on $\cO$ which is real-analytic in the open set $\{(t,f_0)\,:\, f_0\in\cO,\, 0<t<T_+(f_0)\}$;
 \item[$(iv)$] If $f(\,\cdot\,; f_0):[0,T_+(f_0))\cap[0,T]\to\cO$ is uniformly continuous for all $T>0$, then either
 \[\text{ $T_+(f_0)<\infty$ and $\lim_{t\to T_+(f_0)} f(t;f_0)\in\p\cO,$ or $T_+(f_0)=\infty;$}\]
 \item[$(v)$] If $f(\,\cdot\,; f_0)\in B((0,T), H^{2+\e}(\R))$ for some $T\in(0,T_+(f_0))$ and $\e\in(0,1)$, then 
 \[
 f\in C^\omega((0,T)\times\R,\R)\cap C^\omega ((0,T), H^k(\R))\qquad\text{for each $k\in\N.$}
 \]  
\end{itemize}
 \end{thm}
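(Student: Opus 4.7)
The plan is to recast \eqref{P} with $\sigma=0$ as a fully nonlinear abstract Cauchy problem
\[
\dot f=\Phi(f),\qquad f(0)=f_0,
\]
in $L_2(\R)$ with domain $H^2(\R)$, and then to invoke the theory of Lunardi (and Da~Prato--Grisvard) for fully nonlinear parabolic problems in continuous interpolation spaces. Step~1 is the elimination of $\oo$. Writing \eqref{P:2} in the form $A(f)\oo=-\Theta f'$, where
\[
A(f):=\frac{\mu_-+\mu_+}{2k}\,\mathrm{id}+\frac{\mu_--\mu_+}{2\pi k}\,T(f)
\]
and $T(f)$ is the (double-layer-type) singular integral operator appearing on the right of \eqref{P:2}, the $L_2$-mapping properties of the singular integrals developed earlier in the paper show that $\|T(f)\|_{\kL(L_2)}$ is bounded and that $A(f)$ is invertible on $H^r(\R)$ for $r\in\{0,1,2\}$ and for every $f\in H^2(\R)$; the Atwood-type ratio $|\mu_--\mu_+|/(\mu_-+\mu_+)<1$ allows a Neumann/contraction argument. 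The map $f\mapsto A(f)^{-1}$ is then real-analytic, and substituting $\oo=-\Theta A(f)^{-1}f'$ into \eqref{P:1} produces $\p_tf=\Phi(f)$ with $\Phi\in C^\omega(H^2(\R),L_2(\R))$.

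Step~2 is to establish parabolicity of the linearization at any $f_0\in\cO$. Compute the Fréchet derivative $\Phi'(f_0)\in\kL(H^2(\R),L_2(\R))$; by a localization with a partition of unity, freezing of coefficients, and a small perturbation, reduce $\Phi'(f_0)$ to a Fourier multiplier whose principal symbol is of the form $-c(x)|\xi|$ modulo lower-order terms, with $c(x)>0$ precisely when the reformulated Rayleigh-Taylor condition \eqref{RTT2} holds at $f_0$, i.e.\ when $f_0\in\cO$. Conclude that $-\Phi'(f_0)$ generates a strongly continuous analytic semigroup on $L_2(\R)$ with domain $H^2(\R)$, and that this sectoriality depends continuously on $f_0\in\cO$. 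This is the key step where the openness of $\cO$ in $H^2(\R)$, and the necessity of \eqref{NEQ}, enter.

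Step~3 is the abstract existence theory. With the generation property of Step~2 in hand, the fully nonlinear parabolic theorems of Lunardi yield for every $f_0\in\cO$ a maximal solution
\[
f(\,\cdot\,;f_0)\in C([0,T_+(f_0)),\cO)\cap C^1([0,T_+(f_0)),H^1(\R))\cap\bigcap_{\beta\in(0,1)}C^\beta_\beta((0,T],H^2(\R)),
\]
together with uniqueness in the class stated in $(i)$, the continuation alternative $(iv)$, and the fact that the flow map defines a semiflow that is real-analytic on $\{(t,f_0)\,:\,f_0\in\cO,\ 0<t<T_+(f_0)\}$ (this latter property uses the analytic variant of Lunardi's theory and the real-analyticity of $\Phi$).

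Step~4, proving $(v)$, uses the parameter trick of Escher, Pr\"uss, and Simonett. If $f\in B((0,T),H^{2+\e}(\R))$, introduce the two-parameter family $f^{\lambda,\tau}(t,x):=f(\tau t,x+\lambda)$, which solves a perturbed equation whose coefficients depend real-analytically on $(\lambda,\tau)$ near $(0,1)$. Uniqueness from Step~3 together with the implicit function theorem for analytic maps transfers analyticity in $(\lambda,\tau)$ into joint real-analyticity in $(t,x)$ on a neighborhood of any point in $(0,T)\times\R$; a bootstrap gives $f\in C^\omega((0,T),H^k(\R))$ for every $k\in\N$. The main obstacle is Step~2: because for $\mu_-\neq\mu_+$ the quasilinear structure exploited in \cite{M16x} is lost and the operator $A(f)^{-1}$ enters only implicitly, one must extract the principal symbol of $\Phi'(f_0)$ from a composition of nonlocal operators and verify that it is the Rayleigh-Taylor sign alone (and not any accidental cancellation tied to equal viscosities) that secures sectoriality.
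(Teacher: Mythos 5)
Your overall architecture (eliminate $\oo$, linearize, localize to get a Fourier multiplier of symbol $-c(x)|\xi|$ with $c>0$ under Rayleigh--Taylor, invoke Lunardi's fully nonlinear theory, then a parameter trick for $(v)$) is the same as the paper's, but two of your steps have genuine gaps. First, in Step~1 you dispose of the invertibility of $1+a_\mu\bA(f)$ by a ``Neumann/contraction argument'' based on $|a_\mu|<1$ and the $L_2$-boundedness of the singular integral. This fails: the $\kL(L_2)$-norm of the adjoint double layer potential $\bA(f)$ grows with the Lipschitz constant of $f$ and is in general much larger than $1$, so boundedness plus $|a_\mu|<1$ gives nothing. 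What is actually needed --- and what the paper spends all of Section~3 on --- is the Rellich-identity argument showing the uniform resolvent bound $\|\oo\|_2\leq C\|(\lambda-\bA(f))[\oo]\|_2$ for all real $|\lambda|\geq1$ (Theorem~\ref{T:I1}), followed by the commutator estimates of Lemmas~\ref{L:99a}--\ref{L:99d} to upgrade the invertibility to $H^1(\R)$ for $f\in H^2(\R)$ only (Proposition~\ref{P:I2}). Without this, $\oo(f)$ is not even well defined for large data, and the whole reduction to $\p_tf=\Phi(f)$ collapses; this is precisely the place where $\mu_-\neq\mu_+$ is hard.

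Second, your functional setting is inconsistent: you pose the problem ``in $L_2(\R)$ with domain $H^2(\R)$'' and simultaneously claim the principal symbol of $\p\Phi(f_0)$ is $-c(x)|\xi|$, i.e.\ first order. A first-order operator cannot belong to $\kH(H^2(\R),L_2(\R))$ (its resolvent maps $L_2$ only into $H^1$, so the domain with graph norm is $H^1$, not $H^2$), hence Lunardi's fully nonlinear theorem does not apply in the pair you chose. The paper instead takes $\Phi\in C^\omega(H^2(\R),H^1(\R))$ and proves $-\p\Phi(f_0)\in\kH(H^2(\R),H^1(\R))$ --- which is why the $H^1$-invertibility of $1+a_\mu\bA(f)$ and the delicate operators $\ov B_{n,m}$ (to differentiate the nonlocal terms once) are indispensable, and why the invertibility of $\omega-\Psi(0)=\omega+\pi H[a_{\text{\tiny RT}}\p_x]$ requires a separate localization argument (Lemma~\ref{L:Isom}) rather than being a trivial Fourier multiplier statement. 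Finally, in Step~4 your transformation $f(\tau t,x+\lambda)$ with a constant spatial shift does not produce a $\lambda$-dependent vector field (the shift only moves the initial datum, and $\lambda\mapsto f_0(\cdot+\lambda)$ is not analytic into $H^2$); one must use the time-proportional shift $f(\lambda_1 t,x+\lambda_2 t)$, which inserts the term $\lambda_2\p_xu$ into the equation, and the hypothesis $f\in B((0,T),H^{2+\e}(\R))$ is exactly what keeps the transformed function in the uniqueness class $C^\e_\e((0,T],H^2(\R))$ --- a point your sketch does not address.
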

 Given $T>0$  and a Banach space $\bX$, we  let $B((0,T], \bX)$ [resp. $B((0,T), \bX)$] denote the Banach space of all bounded functions form $(0,T]$ [resp. $(0,T)$] into $\bX$, and, given  $\alpha\in(0,1),$ we set 
 \[
 C^\alpha_\alpha((0,T], \bX):=\Big\{u\in B((0,T], \bX)\,:\, \sup_{s\neq t}\frac{|t^\alpha u(t)-s^\alpha u(s)|}{|t-s|^\alpha}<\infty\Big\}.
 \]
 
With respect to $(iv)$  we add the following comments. Firstly,  as shown in \cite[Theorem~1.1]{CCFGL12} in the case when $\mu_-=\mu_+$, there exist solutions which are not uniformly continuous in $\cO$, 
in the sense that their slope blows up in finite time.
Secondly, there exist global solutions to \eqref{P}, cf.  \cite[Theorem~3.1]{CCGS13} (see also \cite[Corollary~1.4]{M16x}) or \cite[Theorem 2.2]{BCS16} (in the periodic setting).
Lastly, the existence of solutions which are uniformly bounded in $H^2(\R)$ and violate the Rayleigh-Taylor sign condition at time $T_+(f_0)<\infty$   is, to the best of our knowledge,  an open issue.

 The condition that $f\in B((0,T), H^{2+\e}(\R))$ for some $T\in(0,T_+(f_0))$  imposed at $(v)$ is a technical assumption. 
 Nevertheless, if $f_0\in \cO\cap H^3(\R),$   our arguments can be extended to show  that  Theorem~\ref{MT2} still  holds true if we replace $\cO$ by $\cO \cap H^3(\R) $ and $H^k(\R)$ by $H^{k+1}(\R)$ for $k\in\{1,2\},$ 
 possibly with a smaller  maximal existence time  $T_{+,3}(f_0)$. Hence, for solutions that start in $H^3(\R)$, the property required at $(v)$ is satisfied for all $T<T_{+,3}(f_0)$ and all $\e\in(0,1).$
 This additional regularity is needed for our argument because the uniqueness property in Theorem~\ref{MT2} holds  only for solutions that additionally
 belong to the space $C^{\alpha}_{\alpha}((0,T], H^2(\R)) $, for some $\alpha\in(0,1)$, and this space is not
 sufficiently flexible  with respect to the parameter  trick used   in  the proof of Theorem~\ref{MT2}.

\section{The governing equations and the equivalence result}\label{Sec1}
We start by presenting the classical formulation of the Muskat problem introduced  in Section \ref{Sec0}.
First of all, both fluids are taken to be incompressible, immiscible, and of Newtonian type. 
Since  flows in porous media or Hele-Shaw cells occur at low Reynolds numbers, they are usually modeled as being two-dimensional and  Darcy's law is used instead of the conservation of momentum equation \cite{Be88}.
Hence, the equations  of motion in the fluid layers are\footnote{The first equation of \eqref{eq:S1} expresses the fact that ${\rm div}\,  v_+(t)=0$ in $ \Omega_+(t)$ and that ${\rm div}\,  v_-(t)=0$ in $ \Omega_-(t).$
This convention is also use in the second equation of \eqref{eq:S1} and at many other places in the paper in various contexts.}
\begin{subequations}\label{PB}
\begin{equation}\label{eq:S1}
\left\{\begin{array}{rllllll}
{\rm div}\,  v_\pm(t)\!\!&=&\!\!0&\text{in $ \Omega_\pm(t) , $}\\[1ex]
v_\pm(t)\!\!&=&\!\!-\cfrac{k}{\mu_\pm}\big(\nabla p_\pm(t)+(0,\rho_\pm g)\big)&\text{in $ \0_\pm(t)$},
\end{array}
\right.
\end{equation} 
with $v_\pm:=(v_\pm^1,v_\pm^2)$ denoting the velocity of the fluid $\pm.$ 
These equations are supplemented by  the natural  boundary conditions on the free surface
\begin{equation}\label{eq:S2}
\left\{\begin{array}{rllllll}
 \langle v_+(t)| \nu(t)\rangle\!\!&=&\!\! \langle v_-(t)| \nu(t)\rangle &\text{on $ [y=f(t,x)+Vt],$}\\[1ex]
p_+(t)-p_-(t)\!\!&=&\!\!\sigma\kappa(f(t))&\text{on $ [y=f(t,x)+Vt],$} 
\end{array}
\right.
\end{equation} 
where  $\nu(t) $ is the unit normal at $[y=f(t,x)+Vt]$ pointing into $\0_+(t)$ and   $\langle\,\cdot\,|\,\cdot\,\rangle$  the Euclidean inner product  on $\R^2$.
Furthermore, we impose the  following far-field boundary  conditions
 \begin{equation}\label{eq:S3}
 \left\{\begin{array}{llllll}
f(t,x)\to0 &\text{for  $|x|\to\infty$,}\\[1ex]
v_\pm(t,x,y)\to (0,V) &\text{for  $|(x,y)|\to\infty$}.
\end{array}
\right.
\end{equation}  
The motion of the interface $[y=f(t,x)+Vt]$ is coupled to that of the fluids through  the kinematic boundary condition
 \begin{equation}\label{eq:S4}
 \p_tf(t)\, =\,\langle v_\pm(t)| (-f'(t),1)\rangle-V\qquad\text{on    $ [y=f(t,x)+ Vt],$}
\end{equation}  
and the interface at time $t=0$ is  assumed to be known
\begin{equation}\label{eq:S5}
f(0)\, =\, f_0.
\end{equation} 
\end{subequations}

 We now rewrite   the classical formulation \eqref{PB} of the Muskat problem   in a coordinates system which moves with the same speed and in the same direction as the fluid system. 
To this end  we introduce
\begin{equation*}
 \left\{
 \begin{aligned}
 & \wt v_\pm (t,x,y):=v_\pm(t,x,y+Vt)-(0,V),\\[1ex]
  &\wt p_\pm (t,x,y):=p_\pm(t,x,y+Vt)
 \end{aligned}
 \right. \qquad\text{in $\0_{\pm}^0(t):=\0_\pm(t)-(0,Vt)$.}
\end{equation*}
It is not difficult to see that the equations \eqref{PB} are equivalent to the following   system of equations which has $(f, \wt v_\pm, \wt p_\pm)$ as unknowns
\begin{equation}\label{EPP}
\left\{
\begin{array}{rlllll}
 {\rm div}\,  \wt v_\pm(t)\!\!&=&\!\! 0 &&\text{in $ \0_{\pm}^0(t),$}\\[1ex]
\wt v_\pm(t)\!\!&=&\!\! -(0,V)-{k}\mu_\pm^{-1}\big(\nabla \wt p_\pm(t)+(0,\rho_\pm g)\big)&&\text{in $ \0_{\pm}^0(t),$} \\[1ex]
\langle \wt v_+(t)| \nu(t)\rangle\!\!&=&\!\! \langle \wt v_-(t)| \nu(t)\rangle&& \text{on $ [y=f(t,x)], $}\\[1ex]
\wt p_+(t)-\wt p_-(t)\!\!&=&\!\! \sigma\kappa(f(t))&&\text{on $ [y=f(t,x)], $}\\[1ex]
f(t,x)&\to &0 &&\text{for  $|x|\to\infty$, }\\[1ex]
\wt v_\pm(t,x,y)&\to& 0 &&\text{for  $|(x,y)|\to\infty,$}\\[1ex]
\p_tf(t) \!\!&=&\!\!\langle \wt v_\pm(t)| (-f'(t),1)\rangle && \text{on   $ [y=f(t,x)], $}\\[1ex]
f(0)\!\!&=&\!\!f_0. 
\end{array}
\right.
\end{equation}

Before stating the equivalence result, cf. Proposition~\ref{PE}, we first give a preparatory lemma, which is needed in the proof of Proposition~\ref{PE} and also later on in the analysis (see the proof of Theorem~\ref{T:I1}).
The proof of Lemma~\ref{L:A2} is based on classical arguments used to establish the  Plemelj formula and the Privalov theorem for  Cauchy-type integrals defined on regular curves, 
see e.g. \cite{JKL93}, and on the Lemmas~\ref{L:99a}-\ref{L:99d}.
Details of the proof  can be found, in a particular case, in \cite[Lemma~A.2.]{M16x}.  
\begin{lemma}\label{L:A2}Given $f\in H^2(\R)$ and $\ov\omega\in H^1(\R)$, we define
 \begin{equation}\label{V1'}
 \wh v(x,y):=\frac{1}{2\pi} \int_\R\frac{(-(y-f(s),x-s)}{ (x-s)^2+(y-f(s))^2 }\ov\omega(s)\, ds\qquad \text{in $\R^2\setminus[y=f(x)]$.}
\end{equation}
Let further  $\0_-^0:=[y<f(x)]$, $ \0_+^0:=[y>f(x)]$, and $\wh v_\pm:=\wh v\big|_{\0_\pm^0}$. Then,     $\wh v_\pm\in C (\ov{\0_\pm^0})\cap C^1({\0_\pm^0})$ and
 \begin{equation}\label{LA2}
\wh v_\pm(x,y)\to0 \qquad\text{for \, $|(x,y)|\to\infty$}.
\end{equation} 
Additionally, if $\ov\omega \in C^\infty_0(\R),$ then there exists a positive integer $N\in\N$  and a constant $C$ such that
 \begin{equation}\label{LA3}
|\wh v_\pm(x,y)|\leq\frac{ C\|\ov\omega\|_1}{|(x,y)|} \qquad\text{for all  $(x,y)\in\0_\pm^0$ with $|(x,y)|\geq N$}.
\end{equation}
\end{lemma}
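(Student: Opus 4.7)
The plan is to establish three properties of $\wh v$: smoothness in the open half-regions $\0_\pm^0$, continuous extension to the boundary curve $[y=f(x)]$, and decay at spatial infinity with the sharp $|(x,y)|^{-1}$ rate in the compactly supported case. The key structural observation is that, with $z = x + iy$ and $\zeta(s) = s + if(s)$,
\[
\wh v_1(x,y) - i\wh v_2(x,y) = -\frac{i}{2\pi}\int_\R \frac{\ov\omega(s)}{z - \zeta(s)}\,ds,
\]
so $\wh v$ is (up to complex conjugation) a Cauchy-type integral along the graph of $f$. Since $H^2(\R) \hookrightarrow C^{1,1/2}(\R)$ and $H^1(\R) \hookrightarrow C^{0,1/2}(\R) \cap C_0(\R)$, both the curve and the density carry enough regularity and decay for the Plemelj--Privalov machinery to apply. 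For $(x,y) \in \0_\pm^0$ the kernel is smooth in $(x,y)$ uniformly on compact subsets, with derivatives square-integrable in $s$; differentiation under the integral and $\ov\omega \in L^2$ give $\wh v_\pm \in C^\infty(\0_\pm^0)$, which contains the claimed $C^1$ regularity.

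The main technical step is continuity up to the curve. Fixing $(x_0, f(x_0))$, I would subtract the value $\ov\omega(x_0)$ and decompose $\wh v(x,y) = \ov\omega(x_0)\,I(x,y) + J(x,y)$, where
\[
I(x,y) := \frac{1}{2\pi}\int_\R \frac{(-(y-f(s)),\, x-s)}{(x-s)^2 + (y-f(s))^2}\,ds
\]
is the Cauchy integral of the constant function $1$ along the graph, and $J$ carries the Hölder factor $\ov\omega(s) - \ov\omega(x_0)$. The integral $I$ can be evaluated by a residue/winding-number computation: it is piecewise constant on $\0_\pm^0$ and extends continuously to each closed half-region with an explicit jump across the curve, which encodes the Plemelj jump relation. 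For $J$, the bound $|\ov\omega(s) - \ov\omega(x_0)| \leq C|s - x_0|^{1/2}$ provided by the Sobolev embedding tames the singularity of the kernel near $s = x_0$, and a standard Plemelj--Privalov estimate produces a locally integrable majorant of order $|s - x_0|^{-1/2}$ that is uniform in $(x,y)$ near $(x_0, f(x_0))$. Dominated convergence then gives continuity of $J$, and hence of $\wh v_\pm$, up to the curve from each side. The careful bookkeeping of this uniform majorant using $f \in C^{1,1/2}$ is where most of the work lies.

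For the decay at infinity with general $\ov\omega \in H^1(\R)$, two regimes are relevant. When $|y| \geq 2\|f\|_\infty + 1$, the lower bound $|y - f(s)| \geq |y|/2$ combined with the pointwise estimate $|K(x,y,s)|^2 \leq [(x-s)^2 + (y-f(s))^2]^{-1}$ yields the Poisson-type bound $\|K(x,y,\cdot)\|_{L^2_s}^2 \leq C/|y|$, and Cauchy--Schwarz against $\ov\omega \in L^2$ gives $|\wh v_\pm(x,y)| \leq C\|\ov\omega\|_{L^2}/|y|^{1/2} \to 0$. The regime $|x| \to \infty$ with $y$ bounded is handled by splitting the $s$-integral around $|s| = |x|/2$: the contribution from $|s| \leq |x|/2$ is controlled because the kernel is bounded by $C/|x|$ there, while the contribution from $|s| > |x|/2$ uses the $L^2$-tail smallness of $\ov\omega$ together with the continuity up to the graph already established, ruling out pathological approach to the curve.

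Finally, for the sharp decay in the compactly supported case, suppose $\supp \ov\omega \subset [-R_0, R_0]$ and $|(x,y)| \geq N := 2(R_0 + \|f\|_\infty + 1)$. Then for every $s \in [-R_0, R_0]$ one has $|(x-s,\, y-f(s))| \geq |(x,y)| - (R_0 + \|f\|_\infty) \geq |(x,y)|/2$, hence $|K(x,y,s)| \leq 2/|(x,y)|$, and direct integration over the support of $\ov\omega$ gives $|\wh v_\pm(x,y)| \leq \|\ov\omega\|_1/(\pi|(x,y)|)$, as required. The main obstacle in the entire argument is the Plemelj continuity step up to the curve; the smoothness and the decay estimates become routine once the Cauchy-integral structure of $\wh v$ has been exploited.
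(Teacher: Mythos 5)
Your overall route -- reading $\wh v$ as a Cauchy-type integral along the graph and running the classical Plemelj--Privalov argument, plus elementary kernel estimates for the decay -- is the same one the paper invokes (it does not write the proof out, but refers to the classical theory and to \cite[Lemma A.2]{M16x}, and treats \eqref{LA3} as an exercise). Your smoothness argument in the open half-regions and your proof of \eqref{LA3} are correct. Two steps of the sketch, however, do not work as stated.

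First, the term $I$ is \emph{not} a winding-number integral and is not piecewise constant. In complex form its kernel is $ds/(z-\zeta(s))$ with $\zeta(s)=s+if(s)$, not $d\zeta(s)/(z-\zeta(s))$; the difference is $-if'(s)\,ds/(z-\zeta(s))$, a Cauchy-type integral with the non-constant density $f'$, so $I$ genuinely depends on $(x,y)$ unless $f$ is affine (only for $f\equiv const$ does one get $\mp 1/2$ on the two sides). Hence the continuity of $\ov\omega(x_0)I$ up to the curve cannot be obtained ``by residues''; after splitting off the $d\zeta$-part you must run the very same Privalov estimate you reserve for $J$, now with density $f'\in H^1(\R)\hookrightarrow C^{0,1/2}$ (subtracting $f'(x_0)$, etc.). In addition, your decomposition $\wh v=\ov\omega(x_0)I+J$ splits an absolutely convergent integral into two that converge only as principal values at infinity (the tangential component of the kernel decays like $1/|s|$, and $\ov\omega(s)-\ov\omega(x_0)$ does not vanish as $|s|\to\infty$); the standard fix is to localize the subtraction with a cutoff near $x_0$, the far part being continuous across the curve near $(x_0,f(x_0))$ for trivial reasons. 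These points are repairable, but the continuity step as written is wrong.

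Second, \eqref{LA2} is not established in the delicate regime where $|(x,y)|\to\infty$ while $(x,y)$ approaches the graph (e.g. $(x,f(x)+\varepsilon)$ with $|x|$ large and $\varepsilon$ small): your Cauchy--Schwarz bound on $\{|s|>|x|/2\}$ degenerates like $\dist\big((x,y),[y=f(x)]\big)^{-1/2}$, and ``continuity up to the graph'' is a local, qualitative fact that by itself rules nothing out. To close this you need, in addition, that the boundary values \eqref{V2} tend to $0$ as $|x|\to\infty$ (this does hold, e.g. because $(\bA(f)\mp1)[\ov\omega]$ and $\bB(f)[\ov\omega]$ lie in $H^1(\R)$ by Lemmas~\ref{L:99e} and \ref{L:reg}, or via a decomposition of $\ov\omega$ into a compactly supported piece plus a piece small in $H^1(\R)$ combined with a uniform bound of $\|\wh v_\pm\|_{L_\infty(\0_\pm^0)}$ by $\|\ov\omega\|_{H^1}$), together with a modulus of continuity up to the curve that is uniform in $x$. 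Neither ingredient is supplied by the sketch, so as it stands the far-field claim \eqref{LA2} has a genuine gap precisely where it is not routine.
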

\begin{proof}
 The first two claims can be established  in the same way as in \cite[Lemma~A.2]{M16x}, while \eqref{LA3} is a simple exercise.
\end{proof}

In the  particular case when  $\mu_-=\mu_+,$   \eqref{P:2} gives a precise correlation between the smoothness of  $\oo$ and that  of $f$. 
This  correlation is for $\mu_-\neq\mu_+$ no longer obvious.
We prove herein, cf. Proposition~\ref{P:I2}, that for $f\in H^2(\R)$, the equation \eqref{P:2} has a unique solution $\oo\in H^1(\R)$, provided that the left-hand side of \eqref{P:2} belongs to $H^1(\R)$. 
If $\sigma>0$, the latter requirement   implies that in fact $f\in H^4(\R)$ is needed. Thanks to the parabolic smoothing in Theorem~\ref{MT2}, this additional regularity is inherited by all solutions.  
This is one of the reasons, besides the difference in nonlinear behavior,  why we separate in Proposition~\ref{PE}   the cases   $\sigma=0$ and $\sigma>0$.

\begin{prop}[Equivalence of the two formulations]\label{PE}
Let   $T\in(0,\infty] $ be given.
\begin{itemize}
 \item[$(a)$] Let $\sigma=0$. 
The following are equivalent:\\[-1.5ex]
 \begin{itemize}
 \item[$(i)$] the Muskat problem \eqref{PB} for $ f\in   C^1([0,T), L_2(\R))$ and
 \begin{align*}
  \bullet &\, \, \, \, f(t)\in  H^2(\R),\, \, \ov\omega(t):=\big\langle (v_-(t)-v_+(t))|_{[y=f(t,x)+Vt]} \big|(1,f'(t))\big\rangle \in  H^1(\R),\\[1ex]
  \bullet &\, \, \, v_\pm(t)\in C(\ov{\0_\pm(t)})\cap C^1({\0_\pm(t)}), \, p_\pm(t)\in C^1(\ov{\0_\pm(t)})\cap C^2({\0_{\pm}(t)}) 
 \end{align*}
 for all $t\in[0,T)$;\\[-2ex]
\item[$(ii)$] the evolution  problem \eqref{P} for $f\in   C^1([0,T), L_2(\R))$, $f(t)\in  H^2(\R)$, and $  \ov\omega(t)\in  H^1(\R)$ for all $t\in[0,T)$.\\[-1.5ex]
\end{itemize}
\item[$(b)$] Let $\sigma>0$. 
The following are equivalent:\\[-1.5ex]
 \begin{itemize}
 \item[$(i)$] the Muskat problem \eqref{PB} for $f\in   C^1((0,T), L_2(\R))\cap C([0,T), L_2(\R))$ and
 \begin{align*}
  \bullet &\, \, \,  f(t)\in  H^4(\R),   \,\,\ov\omega(t):=\big\langle (v_-(t)-v_+(t))|_{[y=f(t,x)+Vt]} \big|(1,f'(t))\big\rangle \in  H^1(\R),  \\[1ex]
  \bullet &\, \, \, v_\pm(t)\in C(\ov{\0_\pm(t)})\cap C^1({\0_\pm(t)}), \, p_\pm(t)\in C^1(\ov{\0_\pm(t)})\cap C^2({\0_{\pm}(t)}) 
 \end{align*}
 for all $t\in(0,T);$\\[-2ex]
\item[$(ii)$] the evolution problem \eqref{P} for $f\in   C^1((0,T), L_2(\R))\cap C([0,T), L_2(\R))$, $ f(t)\in  H^4(\R)$, and $ \ov\omega(t)\in  H^1(\R) $ for all $t\in(0,T)$.
\end{itemize}
\end{itemize}
\end{prop}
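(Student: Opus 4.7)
The plan is to prove each direction by linking the velocity field to the vorticity $\bar\omega$ through a Biot--Savart representation: given $f$ and $\bar\omega$, the unique divergence-free, curl-free vector field in $\Omega_\pm^0$ that decays at infinity and has the prescribed tangential jump across $[y=f(x)]$ is exactly $\hat v|_{\Omega_\pm^0}$ from \eqref{V1'}.

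For $(i)\Rightarrow(ii)$, I would first pass to the moving frame, i.e.\ replace \eqref{PB} by \eqref{EPP}. The fields $\tilde v_\pm$ and $\hat v|_{\Omega_\pm^0}$ are both divergence-free and curl-free in $\Omega_\pm^0$, decay at infinity, and by the Plemelj-type jump relations established in the proof of Lemma~\ref{L:A2} their normal traces agree (both are continuous across the interface) and their tangential jumps agree (both equal $\bar\omega$ in the appropriate normalization). Consequently $\tilde v_\pm - \hat v|_{\Omega_\pm^0}$ glues to a harmonic vector field on $\R^2$ decaying at infinity, and Liouville forces $\tilde v_\pm = \hat v|_{\Omega_\pm^0}$. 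Substituting this identity into the kinematic condition $\p_t f = \langle \tilde v_\pm|(-f',1)\rangle$ and performing the change of variables $y=x-s$ in \eqref{V1'} produces \eqref{P:1}. For \eqref{P:2} I would differentiate the Young--Laplace relation $\tilde p_+ - \tilde p_- = \sigma\kappa(f)$ tangentially along the interface, use Darcy's law
\[
\frac{k}{\mu_\pm}\nabla\tilde p_\pm = -\tilde v_\pm - (0,V) - \frac{k}{\mu_\pm}(0,\rho_\pm g),
\]
and isolate the jump to obtain
\[
\tfrac{1}{k}\langle \mu_-\tilde v_- - \mu_+\tilde v_+\,|\,(1,f')\rangle = [\sigma\kappa(f) - \Theta f]'
\]
on $[y=f(x)]$. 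Splitting $\mu_-\tilde v_- - \mu_+\tilde v_+ = \tfrac{\mu_-+\mu_+}{2}(\tilde v_- - \tilde v_+) + \tfrac{\mu_--\mu_+}{2}(\tilde v_- + \tilde v_+)$, the first piece contracted with $(1,f')$ is $\bar\omega$ by definition, while the second is twice the tangential average of the boundary traces of $\hat v$, which by Plemelj coincides with twice the principal value integral. The same change of variables $y=x-s$ then reproduces the right-hand side of \eqref{P:2}.

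For $(ii)\Rightarrow(i)$, I would \emph{define} $\tilde v_\pm := \hat v|_{\Omega_\pm^0}$ through \eqref{V1'}. Lemma~\ref{L:A2} yields the regularity and the decay at infinity. Direct differentiation under the integral shows that $\tilde v_\pm$ is divergence-free and curl-free in the simply connected domains $\Omega_\pm^0$, so the combination $\tilde v_\pm + (0,V) + \tfrac{k}{\mu_\pm}(0,\rho_\pm g)$ admits a scalar potential; I take it to be $-\tfrac{k}{\mu_\pm}\tilde p_\pm$, defining the pressure up to an additive constant in each region. The continuity of the normal component across the interface is exactly the Plemelj formula for \eqref{V1'}. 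Running the tangential-derivative computation above in reverse, \eqref{P:2} forces $(\tilde p_+ - \tilde p_- - \sigma\kappa(f))' = 0$ on $[y=f(x)]$, and the decay of $f$, $\bar\omega$, and hence of $\tilde v_\pm$ (combined with \eqref{LA3} to justify an approximation argument) allows the free constants in $\tilde p_\pm$ to be fixed so that the pressure jump equals $\sigma\kappa(f)$ exactly. The kinematic condition is then recovered by inverting the change of variables from the other direction.

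The main technical obstacle is the justification of the Plemelj-type boundary trace identities for the Cauchy-like integral \eqref{V1'} on the graph $[y=f(x)]$ at the regularity $f\in H^2(\R)$: both the continuity of the normal trace and the identification of the principal value with the average of the two boundary values have to be carefully established. These are provided by Lemma~\ref{L:A2} and the auxiliary results cited in its proof, after which the remaining identifications are algebraic. The only substantive difference between parts (a) and (b) is the required regularity of $f$: when $\sigma>0$ the presence of $\sigma\kappa(f)'$ in \eqref{P:2} forces $f\in H^4(\R)$ in order that the left-hand side of \eqref{P:2} lie in $H^1(\R)$ and the implicit relation be solvable for $\bar\omega\in H^1(\R)$, whereas when $\sigma=0$ the regularity $f\in H^2(\R)$ already suffices.
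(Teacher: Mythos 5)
Your proposal is correct and follows essentially the same route as the paper: identify $\wt v_\pm$ with the Biot--Savart field $\wh v|_{\0_\pm^0}$ by a Liouville-type uniqueness argument based on the matching normal traces and tangential jumps (the paper implements the gluing step via a stream function, Stokes' theorem and a holomorphic primitive, rather than applying Liouville to the difference field directly), then derive \eqref{P:2} by differentiating the pressure jump tangentially with Darcy's law and the Plemelj formula \eqref{V2}, and reverse the argument by defining the pressures as potentials of the curl-free fields with the additive constants fixed through the jump condition (the paper simply writes this potential as an explicit path integral). These are only cosmetic differences, so your argument matches the paper's proof in substance.
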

\begin{proof}
We only  prove the claim for $\sigma=0$ (the proof of $(b)$ is similar). 
We first  consider  the implication $(i)\Rightarrow (ii)$. Given a set $E,$ we denote herein by ${\bf 1}_E$ the characteristic function of $E$.  
Assume that $(f,v_\pm,p_\pm)$ is a solution to \eqref{PB}  on $[0,T)$ and let  $t\in[0,T)$ be fixed (the time dependence is not written explicitly in  this proof).
It is more convenient   to work here with the formulation \eqref{EPP}. 
 Stokes' theorem   and the second equation of \eqref{eq:S1}  show  that the vorticity  $\omega:={\rm rot\,} \wt v:=\p_x\wt v^2-\p_y\wt v^1 $ defined by the global  velocity field   
 $\wt v:=(\wt v^1,\wt v^2):=\wt v_-{\bf 1}_{[y\leq f(x)]}+\wt v_+{\bf 1}_{[y> f(x)]}$
 is supported on the free boundary, that is 
\[\langle \omega,\varphi\rangle=  \int_\R\ov\omega(x)\varphi(x,f(x))\, dx\qquad\text{for all $\varphi\in C^\infty_0(\R^2)$,}\]
where
\begin{align}\label{vor}
 \ov\omega:= \big\langle(\wt v_- -\wt v_+)|_{[y=f(x)]}\big| (1,f')\big\rangle.
\end{align}

We now claim that the velocity is given by the Biot-Savart law, that is $\wt v=\wh v$ in $\R^2\setminus [y=f(x)]$,
where $\wh v$ is defined in \eqref{V1'} and $\ov\omega$  in \eqref{vor}.
Indeed, according to Plemelj formula, cf. e.g. \cite{JKL93},  the limits  $\wh v_-(x,f(x))$ and $\wh v_+(x,f(x))$  of $\wh v$ at $(x,f(x))$ when we approach this point
from above the interface $[y=f(x)] $ or from below, respectively, are
\begin{align}
\wh v_\pm(x,f(x))=\frac{1}{2\pi}\PV\int_\R\frac{(-(f(x)-f(x-s)),s)}{ s^2+(f(x)-f(x-s))^2 }\ov\omega(x-s)\, ds \mp\frac{1}{2}\frac{(1,f'(x))\ov\omega(x)}{1+{f'}^2(x)},\qquad x\in\R.\label{V2}
\end{align}
Moreover,   the restrictions $\wh v_\pm$ of $\wh v$ to $\0_\pm^0$ belong to $ C(\ov{\0_\pm^0})\cap C^1({\0_\pm^0})$, they satisfy the first, third, sixth equation of \eqref{EPP},   and 
${\rm rot\,}\wh v_\pm=\p_x\wh v_\pm^2-\p_y\wh v_\pm^1=0$ in $\0_\pm^0$.
We now introduce  $V_\pm:=\wt v_\pm-\wh v_\pm$, we set  $V:=(V^1,V^2):=V_-{\bf 1}_{[y\leq f(x)]}+V_+{\bf 1}_{[y> f(x)]}$, and  we consider the stream functions
\[
\psi_\pm(x,y):=\int^{y}_{f(x)}V_\pm ^1(x,s)\, ds-\int_0^x  \langle V_\pm(s,f(s)) | (-f'(s),1)\rangle\, ds\qquad\text{for $(x,y)\in\ov \0_\pm^0.$}
\]
The properties of $\wh v_\pm$ established above together with \eqref{V2} and Stokes' theorem show that the function  
$\psi:=\psi_-{\bf 1}_{[y\leq f]}+\psi_+{\bf 1}_{[y>f]}$ satisfies $\psi\in C(\R^2)$ and $\Delta\psi=0$ in $\mathcal{D}'(\R^2)$. 
Hence, $\psi$ is the real part of a holomorphic function $u:\C\to\C$.
Since $u'$ is also holomorphic and $u'=-(V^2,V^1)$ is bounded and vanishes for $|(x,y)|\to\infty$, it follows that $u'=0$, hence $\wt v_\pm=\wh v_\pm$. 
Differentiating  now the fourth equation of \eqref{EPP} once, the second equation of \eqref{EPP} and \eqref{V2} lead us to
\begin{align*}
&\hspace{-1cm}\Big[\sigma\kappa(f)-\Big(g(\rho_--\rho_+)+\frac{\mu_--\mu_+}{k}V\Big) f\Big]'(x)\\[1ex]
&=\frac{\mu_-+\mu_+}{2k}\ov\omega(x)+ \frac{\mu_--\mu_+}{2\pi k}\PV\int_\R\frac{f'(x)s-(f(x)-f(x-s))}{ s^2+(f(x)-f(x-s))^2 }\ov\omega(x-s)\, ds 
\end{align*}
for all $x\in\R$.
Finally, in view of \eqref{V2} and of the seventh equation of \eqref{EPP}, we may conclude that $(f,\ov w)$ is a solution to  \eqref{P}.

For the inverse implication  we define $\wt v_\pm\in C(\ov{\0_\pm^0})\cap C^1({\0_\pm^0})$ according to \eqref{V1'} and  the pressures $\wt p_\pm\in C^1(\ov\0_\pm^0)\cap C^2(\0_\pm^0)$  by the formula
\begin{align*} 
\wt p_\pm(x,y):=c_\pm-\frac{\mu_\pm}{k}\int_0^x \wt v_\pm^1(s,\pm d)\, ds-\frac{\mu_\pm}{k}\int_{\pm d}^y \wt v_\pm^2(x,s)\, ds-\Big(\rho_\pm g+\frac{\mu_\pm V}{k}\Big)y, \qquad (x,y)\in\ov\0_\pm,
\end{align*}
where $d$ is a positive constant satisfying  $d>\|f\|_ \infty$ and $c_\pm\in\R$. 
For a proper choice of $c_\pm$,  the tuple $(f,\wt p_\pm,\wt v_\pm)$ solves all the equations of \eqref{EPP} and  possesses the regularity properties states at $(i)$.
This  completes the proof of $(a)$.
\end{proof}

\section{On the resolvent set of the adjoint of the double layer potential}\label{Sec2}

In order to solve the Muskat problem \eqref{P}, with and without surface tension, we basically follow the same strategy. The  first step in our approach is to formulate  the system \eqref{P} as an evolution problem for $f$. 
To this end, we have to address the solvability of the equation \eqref{P:2}, which is the content of this section.
This issue is equivalent to   inverting the linear operator   $(1+a_\mu\bA(f))$, where
\begin{align}\label{DL1}
 \bA(f)[\ov\omega](x):=\frac{1}{\pi}\PV\int_\R\frac{ yf'(x)-(f(x)-f(x-y)) }{ y^2+(f(x)-f(x-y))^2 }\ov\omega(x-y)\, dy, 
\end{align}
and where
\[
a_\mu:= \frac{\mu_--\mu_+}{ \mu_-+\mu_+}
\]
denotes the Atwood number.
The operator $\bA(f)$  can be viewed as  the adjoint of the double layer potential, see e.g. \cite{FJR78, Ve84} and Lemma \ref{L:99f}. 
The resolvent set of $\bA(f)$ has been studied previously 
 in the literature (see \cite{FJR78, Ve84, CM97, CCG11, CCG13b} and the references therein), but mostly in bounded geometries where $\bA(f)$ 
 is a compact operator. 
 With respect to our functional analytic approach to \eqref{P}, the existing results cannot be applied, especially because the invertibility  in $\kL(H^1(\R))$ is established for functions  $f$ that are to regular.
For this reason we readdress this issue below, the emphasis being on finding the optimal correlation between the regularity of $f$ and the order of the  Sobolev space where the invertibility is considered, see Remark~\ref{R:IR}.
It is important to note, in the context of the Muskat problem \eqref{P}, that the Atwood number satisfies $|a_\mu|<1.$
  \medskip

\paragraph{\bf Some  multilinear   integral operators}
We now introduce a class of multilinear singular  operators which we encounter  later on when solving the implicit equation \eqref{P:2} for $\oo$.
Given $n,m\in\N$, with $m\geq1, $   we define the  singular integral operator 
\[
B_{n,m}(a_1,\ldots, a_{ m})[b_1,\ldots, b_n,\oo](x):=\PV\int_\R  \frac{\oo(x-y)}{y}\cfrac{\prod_{i=1}^{n}\big(\delta_{[x,y]} b_i /y\big)}{\prod_{i=1}^{m}\big[1+\big(\delta_{[x,y]}  a_i /y\big)^2\big]}\, dy,
\]
where   $a_1,\ldots, a_{m},b_1,\ldots, b_n:\R\to\R$  are Lipschitz functions and $\oo\in L_2(\R)$.
To keep the formulas short, we have set
 \begin{align*}
   \delta_{[x,y]}a:=a(x)-a(x-y)  \qquad \text{for  $x,y\in\R.$} 
 \end{align*}
 Letting $H$ denote the Hilbert transform \cite{St93}, it holds that $B_{0,1}(0)=\pi H,$ and moreover
 \begin{align}\label{FOF}
 \pi\bA(f)=f'B_{0,1}(f)-B_{1,1}(f)[f,\,\cdot\,].
 \end{align}
 We first establish the following result.
 
 \begin{lemma}\label{L:99a}
 Let $1\leq m\in\N$ and  $n\in\N$ be given.  Then: 
 \begin{itemize}
  \item[$(i)$] Given Lipschitz functions $a_1,\ldots, a_{m},b_1,\ldots, b_n:\R\to\R,$ there exists  a positive constant  $C$,
  which depends only on $n,$ $m,$ and $\max_{i=1,\ldots, m}\|a_i'\|_\infty,$ such that 
\begin{align*}
 \|B_{n,m}(a_1,\ldots, a_{m})[b_1,\ldots, b_n,\oo]\|_2\leq C\|\oo\|_2\prod_{i=1}^{n} \|b_i'\|_\infty 
\end{align*}
for all $\oo\in L_2(\R).$
In particular $B_{n,m}(a_1,\ldots, a_{m})[b_1,\ldots, b_n,\,\cdot\,]\in \kL(  L_2(\R))$.

  \item[$(ii)$]  $B_{n,m}\in C^{1-}((W^1_\infty(\R))^{m},\kL_{n+1}((W^1_\infty(\R))^{n}\times L_2(\R),L_2(\R))).$   
   \item[$(iii)$]   Given $r\in (3/2,2)$ and $\tau\in(1/2,2),$  it holds
   \begin{align*}   
 \|B_{n,m}(a_1,\ldots, a_{m})[b_1,\ldots, b_n,\oo]\|_\infty\leq C\|\oo\|_{H^\tau} \prod_{i=1}^{n} \|b_i\|_{H^r} 
\end{align*}
  for all $a_1,\ldots, a_{m}, b_1,\ldots, b_n\in H^r(\R)$ and $\oo\in H^\tau(\R)$,  with $C$ depending only on $\tau,$ $ r,$ $n $, $m,$ and  $\max_{i=1,\ldots, m}\|a_i \|_{H^r}$.
  
 In particular,   $B_{n,m}\in C^{1-}((H^r(\R))^{m},\kL_{n+1}((H^r(\R))^{n}\times H^\tau(\R),L_\infty(\R))).$ 
 \end{itemize}  
 \end{lemma}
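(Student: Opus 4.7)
For part (i), the kernel has the Calder\'on commutator structure
\[
B_{n,m}(a_1,\ldots,a_m)[b_1,\ldots,b_n,\oo](x)=\PV\int_\R\frac{\oo(x-y)}{y}\prod_{i=1}^{n}\frac{\delta_{[x,y]}b_i}{y}\prod_{j=1}^{m}\frac{1}{1+(\delta_{[x,y]}a_j/y)^2}\,dy.
\]
The plan is to apply the partial fraction identity
$\frac{1}{1+t^2}=\frac{1}{2}\bigl(\frac{1}{1-it}+\frac{1}{1+it}\bigr)$
with $t=\delta_{[x,y]}a_j/y$. This rewrites each denominator factor as $\tfrac12\bigl(y/(y-i\delta_{[x,y]}a_j)+y/(y+i\delta_{[x,y]}a_j)\bigr)$, i.e.\ as a Cauchy kernel on the Lipschitz graph $\{s\mapsto(s,\pm a_j(s))\}$. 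Expanding the resulting product over $j=1,\ldots,m$ identifies $B_{n,m}$ with a linear combination of $2^m$ operators to which the Coifman--McIntosh--Meyer theorem on Cauchy integrals on Lipschitz graphs applies, together with the standard Calder\'on commutator estimates absorbing the factors $\delta_{[x,y]}b_i/y$. This yields the $L_2$--bound, polynomial in $\max_j\|a_j'\|_\infty$ and multilinear in $\prod_i\|b_i'\|_\infty$. An alternative self-contained argument along the lines of \cite[Lemma~A.2]{M16x} may also be used.

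For part (ii), I would telescope
\[
B_{n,m}(a_1,\ldots,a_m)-B_{n,m}(\tilde a_1,\ldots,\tilde a_m)=\sum_{j=1}^{m}\bigl[B_{n,m}(\ldots,a_j,\ldots)-B_{n,m}(\ldots,\tilde a_j,\ldots)\bigr]
\]
and use the algebraic identity
\[
\frac{1}{1+(\delta_{[x,y]}\tilde a/y)^2}-\frac{1}{1+(\delta_{[x,y]}a/y)^2}=\frac{(\delta_{[x,y]}(\tilde a-a)/y)\,(\delta_{[x,y]}(\tilde a+a)/y)}{\bigl(1+(\delta_{[x,y]}a/y)^2\bigr)\bigl(1+(\delta_{[x,y]}\tilde a/y)^2\bigr)}
\]
to express each summand as an operator of the class $B_{n+2,m+1}$, with two additional $b$--slots occupied by $\tilde a_j-a_j$ and $\tilde a_j+a_j$. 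Applying (i) to each summand then yields the local Lipschitz estimate in $\kL_{n+1}((W^1_\infty(\R))^n\times L_2(\R),L_2(\R))$.

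For part (iii), exploit the Sobolev embeddings $H^r(\R)\hookrightarrow C^{1,r-3/2-\e}(\R)$ (valid for $r>3/2$) and $H^\tau(\R)\hookrightarrow C^{\tau-1/2-\e}(\R)\cap L_\infty(\R)$ (valid for $\tau>1/2$). Fix $x\in\R$ and split the principal value into a near region $|y|\le1$ and a far region $|y|>1$. In the near region, introduce the ``value at $y=0$'', $K_0(x):=\prod_i b_i'(x)/\prod_j(1+a_j'(x)^2)$; H\"older continuity of $b_i'$ and $a_j'$ gives $|K(x,y)-K_0(x)|\le C|y|^{r-3/2-\e}\prod_i\|b_i\|_{H^r}$, so the near-region contribution decomposes into $K_0(x)$ times a truncated principal value $\PV\int_{|y|\le1}\oo(x-y)/y\,dy$, bounded by the H\"older seminorm of $\oo$ (hence by $\|\oo\|_{H^\tau}$), plus a remainder integral whose kernel has size $|y|^{r-5/2-\e}$, integrable because $r>3/2$. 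In the far region, for $n\ge1$ one has $|K(x,y)/y|\le C\prod_i\|b_i\|_\infty/|y|^{n+1}$, and a direct Cauchy--Schwarz estimate using $\oo\in L_2\subset H^\tau$ suffices; for $n=0$, decompose $K(x,y)=1+(K(x,y)-1)$, which produces $\pi H\oo(x)$ --- bounded in $L_\infty$ because $H$ preserves $H^\tau\hookrightarrow L_\infty$ for $\tau>1/2$ --- plus a remainder whose kernel is of order $1/|y|^3$ at infinity since $\delta_{[x,y]}a_j/y=O(1/|y|)$. Assembling these estimates gives the multilinear $L_\infty$--bound, and the local Lipschitz continuity in the $a$--parameters then follows by the same telescoping as in (ii).

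The main technical obstacle is the case $n=0$ of part (iii): since $B_{0,m}$ reduces in the limit $a_j\to0$ to a multiple of the Hilbert transform, and $H$ is unbounded on $L_\infty$, purely pointwise kernel estimates cannot absorb the non-integrability of $1/y$ at infinity, and one must fall back on the Sobolev-scale mapping properties of the Hilbert transform to close the argument.
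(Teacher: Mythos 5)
Your proposal is essentially correct, but it does not follow the paper's route everywhere, so a comparison is in order. For part (i) the paper simply cites \cite[Remark~3.3]{M16x}, where the bound is deduced from a deep $L_2$-result of Murai on Cauchy-type integrals along Lipschitz graphs; your reduction via the partial-fraction identity $\tfrac{1}{1+t^2}=\tfrac12\big(\tfrac{1}{1-it}+\tfrac{1}{1+it}\big)$ to the Coifman--McIntosh--Meyer theorem plus Calder\'on commutator estimates is a standard and legitimate alternative that makes the same harmonic-analysis input explicit. Part (ii) is exactly the paper's argument: the telescoping combined with the algebraic identity that turns each difference into an operator $B_{n+2,m+1}$ with the extra slots $a_i+\wt a_i$ and $a_i-\wt a_i$ is precisely relation \eqref{rell}. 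In part (iii) the paper splits into $T_1$ (near region, with $\oo(x-y)-\oo(x)$, estimated by $[\oo]_{\tau-1/2}$), $T_2$ ($\oo(x)$ times the principal value of the kernel over $|y|\le1$, controlled by the second-difference estimate \eqref{MM} for $b_i',a_j'$), and $T_3$ (far region, estimated for every $n\ge0$ in one line by Cauchy--Schwarz against $\|\oo\|_2$); your variant freezes the kernel at $y=0$ instead of splitting $\oo$, which is an equivalent use of the H\"older regularity of $b_i'$ and $a_j'$ coming from $H^r\hookrightarrow {\rm BC}^{r-1/2}$. The one point where your reasoning is off is the alleged obstacle for $n=0$: the $1/|y|$ tail at infinity is not a problem for pointwise estimates, because Cauchy--Schwarz gives $\int_{|y|>1}|\oo(x-y)|/|y|\,dy\le \sqrt2\,\|\oo\|_2\le C\|\oo\|_{H^\tau}$, uniformly in $x$ and in $n$ --- this is exactly how the paper bounds $T_3$, with no case distinction. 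Your detour through $K=1+(K-1)$ and the boundedness of the Hilbert transform on $H^\tau\hookrightarrow L_\infty$ is correct (after noting that the resulting truncated integral over $|y|\le 1$ is again controlled by the H\"older seminorm of $\oo$), so this is an unnecessary complication rather than a gap; the final Lipschitz continuity in the $a$-variables via the same telescoping as in (ii) matches the paper.
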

\begin{proof}
 The assertion $(i)$ has been proved in \cite[Remark~3.3]{M16x} by exploiting a result from harmonic analysis due to T. Murai \cite{TM86}. 
 Furthermore, the local Lipschitz continuity properties stated at $(ii)$ and $(iii)$ follow  from the  estimates at $(i)$ and $(iii)$, respectively, via the relation
 \begin{align}
 &\hspace{-1cm}B_{n,m}(\wt a_{1}, \ldots, \wt a_{m})[b_1,\ldots, b_{n},\oo]-  B_{n,m}(a_1, \ldots, a_{m})[b_1,\ldots, b_{n},\oo]\nonumber\\[1ex]
 &=\sum_{i=1}^{m} B_{n+2,m+1}(\wt a_{1},\ldots, \wt a_{i},a_i,\ldots \ldots, a_{m})[b_1,\ldots, b_{n},a_i+\wt a_{i}, a_i-\wt a_{i},\oo ].\label{rell}
\end{align}

In order to establish the estimate given  at $(iii)$, we write  
\begin{align*}
 B_{n,m}(a_1,\ldots, a_{m})[b_1,\ldots, b_n,\oo]= T_1+T_2+T_3,
\end{align*}
where
\begin{align*}
T_1 (x)&:=\int\limits_{|y|\leq 1} \cfrac{\prod_{i=1}^{n}\big(\delta_{[x,y]} b_i /y\big)}{\prod_{i=1}^{m}\big[1+\big(\delta_{[x,y]}  a_i /y\big)^2\big]} \frac{\oo(x-y)-\oo(x)}{y}\, dy,\\[1ex]
 T_2(x)&:= \oo(x)\PV\int\limits_{|y|\leq 1} \frac{1}{y}\cfrac{\prod_{i=1}^{n}\big(\delta_{[x,y]} b_i /y\big)}{\prod_{i=1}^{m}\big[1+\big(\delta_{[x,y]}  a_i /y\big)^2\big]}\, dy,\\[1ex]
 T_3(x)&:= \PV\int\limits_{|y|> 1}\cfrac{\prod_{i=1}^{n}\big(\delta_{[x,y]} b_i /y\big)}{\prod_{i=1}^{m}\big[1+\big(\delta_{[x,y]}  a_i /y\big)^2\big]} \frac{\oo(x-y)}{y}\, dy.
\end{align*}
A straightforward argument shows that 
\begin{align*}
 \|T_1\|_\infty\leq \frac{4}{2\tau-1}[\oo]_{\tau-1/2}\prod_{i=1}^{n}\|b_i'\|_\infty  \qquad \text{and}\qquad \|T_3\|_\infty\leq 2\|\oo\|_2\prod_{i=1}^{n}\|b_i'\|_\infty.
\end{align*}
Moreover, since $r-1/2\in (1,2)$,   it holds that $H^r(\R)\hookrightarrow {\rm BC}^{r-1/2}(\R) $,  and therefore
 \begin{align}\label{MM}
  \frac{|f(x+y)-2f(x)+f(x-y)|}{|y|^{r-1/2}}\leq 4[f']_{r-3/2} \qquad\text{for all $f\in H^r(\R)$, $x\in\R$, $y\neq0,$}
\end{align}
cf. \cite[Relation (0.2.2)]{L95}. Here $[\,\cdot\,]_{r-3/2}$ denotes the usual H\"older seminorm.  
Using the definition of the principal value together with \eqref{MM}, it follows that  
\begin{align*}
\|T_2\|_\infty\leq \frac{8}{r-3/2}\|\oo\|_\infty    \Big[\sum_{i=1}^{n} \Big([b_i']_{r-3/2}\prod_{{j=1, j\neq i}}^{n}\|b_j'\|_\infty\Big)+ \Big(\prod_{i=1}^{n}\|b_i'\|_\infty\Big)\sum_{i=1}^{m}\|a_i'\|_\infty[a_i']_{r-3/2}\Big],
\end{align*}
and  $(iii)$ follows at once.
\end{proof}

 We are additionally confronted in our analysis with a different type of singular integral operators. 
 These operators, denoted by $\ov B_{n,m}$, with $nm\geq1$,
 are  extensions of the operators $B_{n,m}$ introduced above
to a   Sobolev space product where a lower regularity of the  variable $b_1$ is compensated  by a higher regularity of the variable $\oo$.
The extension property is a consequence of the estimate \eqref{REF1} derived below, while the estimate  \eqref{REF2} plays a key role  later on 
in the proofs of the Theorems~\ref{TK1} and \ref{T2}, when identifying the important terms that need to be estimated.

\begin{lemma}\label{L:99d}
 Let  $ n,m\in\N$ with $nm\geq1,$    $\tau\in (1/2,1),$ and $r\in(5/2-\tau,2) $  be given.  
 \begin{itemize}
  \item[$(i)$] Given $a_1,\ldots, a_{m}\in H^r(\R)$,     we let
  \[
  \ov B_{n,m}(a_1,\ldots, a_{m})[b_1,\ldots, b_n,\oo]:=B_{n,m}(a_1,\ldots, a_{m})[b_1,\ldots, b_n,\oo]
  \]
  for $\oo\in H^1(\R),$ and $ b_1,\ldots, b_n\in H^r(\R).$
  Then, there exists a constant  $C$, depending only on $n,m$, $r$, $\tau$,  and $\max_{1\leq i\leq m}\|a_i\|_{H^r}$, such that
\begin{align} 
&\|\ov B_{n,m}(a_1,\ldots, a_{m})[b_1,\ldots, b_n,\oo]\|_2\leq C\|\oo\|_{H^\tau}\|b_1\|_{H^1}\prod_{i=2}^n\|b_i\|_{H^r} \label{REF1}
\end{align}
and
\begin{align} 
&\|\ov B_{n,m}(a_1,\ldots, a_{m})[b_1,\ldots, b_n,\oo]-B_{n-1,m}(a_1,\ldots, a_{m})[b_2,\ldots, b_n,b_1'\oo]\|_2\nonumber\\[1ex]
&\hspace{5.6cm}\leq C\|b_1\|_{H^{\tau}}\|\oo\|_{H^1}\prod_{i=2}^n\|b_i\|_{H^r}.\label{REF2}
\end{align}
In particular,  $\ov B_{n,m}(a_{1}, \ldots, a_{m})$ extends to a bounded operator   
$$\ov B_{n,m}(a_{1}, \ldots, a_{m}) \in\kL_{n+1}(H^1(\R)\times  (H^r(\R))^{n-1}\times H^{\tau}(\R), L_2(\R)).$$
  \item[$(ii)$]  $\ov B_{n,m}\in C^{1-}((H^r(\R))^m,\kL_{n+1}(H^1(\R)\times (H^{r}(\R))^{n-1}\times H^{\tau}(\R), L_2(\R))).$
 \end{itemize}  
 \end{lemma}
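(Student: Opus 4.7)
The plan is to establish part~(i); part~(ii) then follows formally from (i) by the same telescoping identity as in Lemma~\ref{L:99a}(ii), see \eqref{rell}. The cornerstone is the representation
\[
\frac{\delta_{[x,y]}b_1}{y}=\int_{0}^{1}b_1'(x-y+ty)\,dt,\qquad b_1\in H^1(\R),
\]
which displays $\delta_{[x,y]}b_1/y$ as a $t$-average of translates of $b_1'\in L_2(\R)$; it is this identity that makes the extension to $H^1$-inputs possible in the first place.

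For the bound \eqref{REF1} I would substitute the above identity into the integrand of $B_{n,m}$ and apply Minkowski's integral inequality to pull the $L_2(\R)$-norm in $x$ inside the $t$-integration. This reduces matters to a uniform-in-$t\in[0,1]$ $L_2$-bound of
\[
T_t(x):=\PV\int_{\R}\frac{\oo(x-y)\,b_1'(x-y+ty)}{y}\,\frac{\prod_{i=2}^{n}\bigl(\delta_{[x,y]}b_i/y\bigr)}{\prod_{i=1}^{m}\bigl[1+(\delta_{[x,y]}a_i/y)^2\bigr]}\,dy.
\]
Because $r>3/2$ and $\tau>1/2$, the embeddings $H^{r}(\R)\hookrightarrow W^{1}_\infty(\R)$ and $H^{\tau}(\R)\hookrightarrow L_\infty(\R)$ place $\delta_{[x,y]}b_i/y$ (for $i\ge 2$) and $\oo$ under pointwise control, so the remaining kernel is of Calder\'on type acting on $b_1'\in L_2(\R)$. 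Its $L_2$-boundedness then follows from the same Murai-type estimate already used to prove Lemma~\ref{L:99a}(i), with constants uniform in $t$. Integrating in $t$ delivers \eqref{REF1}.

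For the comparison bound \eqref{REF2}, the integrand contains the difference
\[
\frac{\delta_{[x,y]}b_1}{y}-b_1'(x-y)=\int_{0}^{1}\bigl[b_1'(x-y+ty)-b_1'(x-y)\bigr]\,dt,
\]
which vanishes to first order in $y$. Working first with $b_1,\oo\in C^\infty_0(\R)$ and extending by density using \eqref{REF1}, I would perform an integration by parts in $y$ that transfers the derivative from the $b_1'$-increment onto the adjacent factor $\oo(x-y)/y$. After the swap, this derivative lands either on $\oo$ (producing $\oo'(x-y)\in L_2(\R)$) or on the $1/y$-factor and the remaining $\delta_{[x,y]}b_i/y$, $\delta_{[x,y]}a_i/y$ pieces, yielding additional terms of the same structural type as $B_{N,M}$ but with $b_1$ itself in place of $b_1'$ in the integrand. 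Each piece can then be estimated via Lemma~\ref{L:99a}(i) together with \eqref{REF1} applied to $\oo'\in L_2(\R)$, and the resulting bound retains only the $H^\tau$-norm of $b_1$ on the right-hand side.

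The hard part will be the derivative-swap step in the proof of \eqref{REF2}. The $1/y$ singularity means boundary contributions under the principal value must be treated with care, and the new terms produced, though structurally similar to $B_{N,M}$, feature the borderline input $\oo'\in L_2(\R)$. Keeping track of the precise Sobolev regularities so that the final constant depends only on $\|b_1\|_{H^\tau}$, $\|\oo\|_{H^1}$, $\|b_i\|_{H^r}$, and $\max_i\|a_i\|_{H^r}$, and not on any stronger norm, is the main technical hurdle; the density argument bridging $C^\infty_0$ to $H^1$-inputs will rely crucially on the uniform bound obtained in the first step.
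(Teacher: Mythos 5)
Your reduction for \eqref{REF1} contains a genuine gap. After inserting $\delta_{[x,y]}b_1/y=\int_0^1 b_1'(x-(1-t)y)\,dt$ and applying Minkowski's inequality, you must bound, uniformly in $t\in[0,1]$, an operator acting on the $L_2$-density $b_1'$ which is evaluated at the translate $x-(1-t)y$, while the singular factor $1/y$ and the difference quotients $\delta_{[x,y]}b_i/y$, $\delta_{[x,y]}a_i/y$ live at scale $y$ and the additional factor $\oo(x-y)$ sits at the translate $x-y$. This is not an operator of the form covered by Lemma~\ref{L:99a}~$(i)$: the Murai-type estimate applies when the $L_2$-argument appears as $\oo(x-y)$, matched to the kernel scale, and it is not stable under inserting the extra bounded factor $\oo(x-y)$ once the density is moved to a mismatched translate (nor does a change of variables restore the required structure, since the difference quotients then appear at scale $y/(1-t)$). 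Indeed, already in the simplest instance $n=1$, $m=1$, $a_1=0$, your $T_t$ is exactly the bilinear Hilbert transform $\PV\int_\R\oo(x-y)\,b_1'(x-(1-t)y)\,y^{-1}dy$, and the bound $\|T_t\|_2\leq C\|\oo\|_\infty\|b_1'\|_2$ uniformly in $t$ is precisely the uniform bilinear Hilbert transform estimate --- the step at which Calder\'on's original averaging approach to his commutator broke down, and which requires time--frequency analysis (Lacey--Thiele, Thiele, Grafakos--Li) rather than Calder\'on--Zygmund/Murai technology; with the additional rough coefficients present here it is not available as a black box at all. So the assertion that the uniform-in-$t$ bound ``follows from the same Murai-type estimate'' is unjustified, and \eqref{REF1}, which is the heart of the lemma, remains unproved.

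For comparison, the paper never decouples $b_1$ from the kernel scale by averaging. It uses the identity $\delta_{[x,y]}b_1/y^2=b_1'(x-y)/y-\tfrac{\p}{\p y}\big(\delta_{[x,y]}b_1/y\big)$, integrates by parts in $y$, and estimates the resulting commutator $b_1B_{n-1,m}[\ldots,\oo']-B_{n-1,m}[\ldots,b_1\oo']$ together with explicit kernels whose numerators $\delta_{[x,y]}b_j-yb_j'(x-y)$ and $\oo-\tau_y\oo$ carry the needed cancellation; these are handled by Minkowski's inequality and Plancherel, using $H^s(\R)\hookrightarrow{\rm BC}^{s-1/2}(\R)$, and for \eqref{REF1} a second integration by parts via $\oo'(x-y)=\tfrac{\p}{\p y}(\oo(x)-\oo(x-y))$ trades the derivative on $\oo$ for H\"older regularity of $b_1$. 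With that decomposition \eqref{REF2} is then almost immediate (estimate the commutator term crudely by Lemma~\ref{L:99a} and $\|b_1\|_\infty\leq C\|b_1\|_{H^\tau}$), whereas your sketch for \eqref{REF2} also proposes applying \eqref{REF1} with $\oo'\in L_2$ in the slot that requires $H^\tau$-regularity, which does not parse; but the decisive defect is the uniform-in-$t$ claim behind \eqref{REF1}.
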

\begin{proof} Similarly as in the previous lemma, the assertion $(ii)$ is a consequence  of $(i)$, more precisely of \eqref{REF1}. 
To establish $(i)$ we use the formula
\[
\frac{\p}{\p y}\Big(\frac{\delta_{[x,y]}h}{y}\Big)=\frac{h'(x-y)}{y}-\frac{\delta_{[x,y]}h}{y^2}\qquad \text{for $x\in\R,$ $y\neq 0$,}
\]
and compute that 
\begin{align*}
  \ov B_{n,m}(a_1,\ldots, a_{m})[b_1,\ldots, b_n,\oo](x)&=\PV\int_\R\frac{\delta_{[x,y]} b_1}{y^2}\cfrac{\prod_{i=2}^{n}\big(\delta_{[x,y]} b_i /y\big)}{\prod_{i=1}^{m}\big[1+\big(\delta_{[x,y]}  a_i /y\big)^2\big]} \oo(x-y)\, dy\\[1ex]
  &=B_{n-1,m}(a_1,\ldots, a_{m})[b_2,\ldots, b_n,b_1'\oo](x)\\[1ex]
  &\hspace{0.424cm}-\PV\int_\R\frac{\p}{\p y}\Big(\frac{\delta_{[x,y]}b_1}{y}\Big)\cfrac{\prod_{i=2}^{n}\big(\delta_{[x,y]} b_i /y\big)}{\prod_{i=1}^{m}\big[1+\big(\delta_{[x,y]}  a_i /y\big)^2\big]} \oo(x-y)\, dy.
\end{align*}
Integrating  by parts, we are led  to the relation
\begin{align}
  &\hspace{-1cm} \PV\int_\R\frac{\p}{\p y}\Big(\frac{\delta_{[x,y]}b_1}{y}\Big)\cfrac{\prod_{i=2}^{n}\big(\delta_{[x,y]} b_i /y\big)}{\prod_{i=1}^{m}\big[1+\big(\delta_{[x,y]}  a_i /y\big)^2\big]} \oo(x-y)\, dy\nonumber\\[1ex]
  &=(b_1B_{n-1,m}(a_1,\ldots, a_{m})[b_2,\ldots, b_n, \oo'])(x)-B_{n-1,m}(a_1,\ldots, a_{m})[b_2,\ldots, b_n, b_1\oo'](x)\nonumber\\[1ex]
  &\hspace{0.424cm}+\sum_{j=2}^n\int_\R  K_{1,j}(x,y)  \oo(x-y)\, dy-\sum_{j=1}^m\int_\R K_{2,j}(x,y) \   \oo(x-y)\, dy,\label{BGG}
\end{align}
where, for $x\in\R$ and $y\neq0$, we set
\begin{align*}
 K_{1,j}(x,y)&:= \cfrac{\prod_{i=2, i\neq j }^{n}\big(\delta_{[x,y]} b_i /y\big)}{\prod_{i=1}^{m}\big[1+\big(\delta_{[x,y]}  a_i /y\big)^2\big]}\frac{\delta_{[x,y]}b_j-yb_j'(x-y)}{y^2}\frac{\delta_{[x,y]}b_1}{y},\\[1ex]
 K_{2,j}(x,y)&:= 2\cfrac{\prod_{i=2 }^{n}\big(\delta_{[x,y]} b_i /y\big)}{\big[1+\big(\delta_{[x,y]}  a_j /y\big)^2\big]\prod_{i=1}^{m}\big[1+\big(\delta_{[x,y]}  a_i /y\big)^2\big]}
  \frac{\delta_{[x,y]}a_j-ya_j'(x-y)}{y^2}\frac{\delta_{[x,y]}a_j}{y}\frac{\delta_{[x,y]}b_1}{y}.
\end{align*}
In view of Lemma~\ref{L:99a} we have 
\begin{align}\label{BGG0}
 \|B_{n-1,m}(a_1,\ldots, a_{m})[b_2,\ldots, b_n,b_1'\oo]\|_2\leq C\|\oo\|_\infty\|b_1\|_{H^1}\prod_{j=2}^n\|b_j'\|_\infty,
\end{align}
and we are left to estimate the $L_2$-norm of the terms on  the right-hand side of \eqref{BGG}.
For the integral terms we use
Minkowski's  integral inequality to obtain that
\begin{align*}
\Big(\int_\R\Big|\int_\R K_{1,j}(x,y)\oo(x-y)\, dy\Big|^2\, dx\Big)^{1/2}\leq  \|\oo\|_{\infty}\int_\R\Big(\int_\R |K_{1,j}(x,y)|^2\, dx\Big)^{1/2}\, dy.
\end{align*}
In the following $\kF$ denotes the Fourier transform.
Appealing to  $b_1\in{\rm BC}^{\tau-1/2}(\R)$, we  have 
\begin{align*}
 \int_\R |K_{1,j}(x,y)|^2\, dx =&\frac{1 }{y^{7-2\tau}} [b_1]_{\tau-1/2}^2\Big(\prod_{i=2, i\neq j}^{n}\|b_i'\|_{\infty}^2\Big)\int_\R|b_j-\tau_y b_j-y\tau_y b_j'|^2\, dx\\[1ex]
 \leq& \frac{C }{y^{7-2\tau}} \|b_1\|_{H^{\tau}}^2\Big(\prod_{i=2, i\neq j}^{n}\|b_i'\|_{\infty}^2\Big)\int_\R|\mathcal{F} b_j(\xi)|^2|e^{iy\xi}-1-iy\xi|^2\, d\xi,
\end{align*}
and together with the  inequality 
\begin{align*}
|e^{iy\xi}-1-iy\xi|^2\leq  C\big[(1+|\xi|^2)^ry^{2r} {\bf 1}_{(-1,1)}(y)+y^2(1+|\xi|^2) {\bf 1}_{[|y|\geq1]}(y)\big],\qquad y,\xi\in\R,
\end{align*}
we find   
\[
\int_\R |K_{1,j}(x,y)|^2\, dx\leq C \|b_1\|_{H^{\tau}}^2 \Big(\prod_{i=2}^{n}\|b_i\|_{H^r}^2\Big) \Big[y^{2(r+\tau)-7} {\bf 1}_{(-1,1)}(y)+ \frac{1}{y^{5-2\tau}} {\bf 1}_{[|y|\geq1]}(y)\Big],\qquad 2\leq j\leq n.
\]
Consequently, 
\begin{align}\label{BGG2}
\Big(\int_\R\Big|\int_\R K_{1,j}(x,y)\oo(x-y)\, dy\Big|^2\, dx\Big)^{1/2}\leq C\|\oo\|_\infty \|b_1\|_{H^{\tau}} \Big(\prod_{i=2}^{n}\|b_i\|_{H^r}\Big),\qquad 2\leq j\leq n,
\end{align} 
and analogously we obtain for $ 1\leq j\leq m$ that
\begin{align}\label{BGG3}
\Big(\int_\R\Big|\int_\R K_{2,j}(x,y)\oo(x-y)\, dy\Big|^2\, dx\Big)^{1/2}\leq   C\|\oo\|_\infty\|a_j\|_{H^r} \|b_1\|_{H^{\tau}} \Big(\prod_{i=2}^{n}\|b_i\|_{H^r}\Big).
\end{align}

We are left  with the term
\[T:=b_1B_{n-1,m}(a_1,\ldots, a_{m})[b_2,\ldots, b_n, \oo'] -B_{n-1,m}(a_1,\ldots, a_{m})[b_2,\ldots, b_n, b_1\oo'].\]
The estimate  \eqref{REF2} follows  by using Lemma~\ref{L:99a}. In order to derive \eqref{REF1} we proceed differently. 
The relation $\oo'(x-y)=(\p/\p y) (\oo(x)-\oo(x-y))$ together with integration by parts leads us to  
\begin{align*}
T(x)=\sum_{j=1}^n \int_\R K_{3,j}(x,y)\, dy -2\sum_{j=1}^m\int_\R K_{4,j}(x,y)\, dy,
\end{align*}
where
\begin{align*}
K_{3,j}(x,y)&:= \frac{\prod_{i=1, i\neq j}^n \delta_{[x,y]}b_i/y}{\prod_{i=1}^m 1+ \big(\delta_{[x,y]}a_i/y\big)^2}\frac{\delta_{[x,y]}\oo}{y}\Big(\frac{\delta_{[x,y]}b_j}{y}- b_j'(x-y)\Big),\\[1ex]
K_{4,j}(x,y)&:= \frac{\prod_{i=1}^n \delta_{[x,y]}b_i/y}{\big[1+ \big(\delta_{[x,y]}a_j/y\big)^2\big]\prod_{i=1}^m 1+ \big(\delta_{[x,y]}a_i/y\big)^2}\frac{\delta_{[x,y]}\oo}{y}\frac{\delta_{[x,y]}a_j}{y}\Big(\frac{\delta_{[x,y]}a_j}{y}- a_j'(x-y)\Big) .
\end{align*}

We first estimate the integrals defined by  the kernels $K_{4,j},$ $1\leq j\leq m.$
Minkowski's  integral inequality  implies that 
\begin{align*}
\Big(\int_\R\Big|\int_\R K_{4,j}(x,y) \, dy\Big|^2\, dx\Big)^{1/2}\leq    \int_\R\Big(\int_\R |K_{4,j}(x,y)|^2\, dx\Big)^{1/2}\, dy,
\end{align*}
and, since  $b_1\in{\rm BC}^{r-3/2}(\R)$, we get
\begin{align*}
 \int_\R |K_{4,j}(x,y)|^2\, dx&\leq \frac{2 }{y^{7-2r}} [b_1]_{{r-3/2}}^2\|a_j'\|_\infty^2\Big(\prod_{i=2}^{n}\|b_i'\|_{\infty}^2\Big)\int_\R|\oo-\tau_y\oo|^2\, dx\\[1ex]
 &= \frac{C }{y^{7-2r}} \|b_1\|_{H^{r-1}}^2\|a_j'\|_\infty^2\Big(\prod_{i=2}^{n}\|b_i'\|_{\infty}^2\Big)\int_\R|\mathcal{F}\oo(\xi)|^2|e^{iy\xi}-1|^2\, d\xi.
\end{align*}
Tanking advantage of  the inequality
\[
|e^{iy\xi}-1|^2\leq  C\big[(1+|\xi|^2)^{\tau}y^{2\tau} {\bf 1}_{(-1,1)}(y)+ {\bf 1}_{[|y|\geq1]}(y)\big],\qquad y,\xi\in\R,
\]
it follows that
\[
\int_\R |K_{4,j}(x,y)|^2\, dx\leq C \|b_1\|_{H^{r-1}}^2\|\oo\|_{H^{\tau}}^2\|a_j'\|_\infty^2\Big(\prod_{i=2}^{n}\|b_i'\|_{\infty}^2\Big)  \Big[y^{2(r+\tau)-7} {\bf 1}_{(-1,1)}(y)+ \frac{1}{y^{7-2r}}{\bf 1}_{[|y|\geq1]}(y)\Big],
\]
and therewith 
\begin{align}\label{BGG4}
 \Big(\int_\R\Big|\int_\R K_{4,j}(x,y) \, dy\Big|^2\, dx\Big)^{1/2}\leq C\|   w\|_{H^\tau}\|b_1\|_{H^{r-1}}\|a_j'\|_\infty\prod_{i=2}^n\|b_i'\|_\infty,\qquad 1\leq j\leq m.
\end{align}
Analogously, for $2\leq j\leq n,$ we have that
\begin{align}\label{BGG5}
 \Big(\int_\R\Big|\int_\R K_{3,j}(x,y) \, dy\Big|^2\, dx\Big)^{1/2}\leq C \|w\|_{H^\tau}\|b_1\|_{H^{r-1}}\prod_{i=2}^n\|b_i'\|_\infty, 
\end{align}
while, for $j=1$, we obtain the following estimate
\begin{align}\label{BGG6}
\Big(\int_\R\Big|\int_\R K_{3,1}(x,y) \, dy\Big|^2\, dx\Big)^{1/2}&\leq C \|w\|_{H^\tau}\|b_1\|_{H^{r-1}}\prod_{i=2}^n\|b_i'\|_\infty\nonumber\\[1ex]
&\hspace{0.424cm}+\|w\|_\infty \|B_{n-1,m}(a_1,\ldots,a_m)[b_2,\ldots, b_n, b_1']\|_2\nonumber\\[1ex]
&\hspace{0.424cm}+\|B_{n-1,m}(a_1,\ldots,a_m)[b_2,\ldots, b_n, b_1'w]\|_2.
\end{align}
The  estimate \eqref{REF1} follows from Lemma~\ref{L:99a} and \eqref{BGG0}-\eqref{BGG6}. 
\end{proof}\medskip

\paragraph{\bf Mapping properties}
Using the  Lemmas \ref{L:99a}-\ref{L:99d}, we now study the mapping properties of the nonlinear (with respect to $f$) operator $\bA$ defined in \eqref{DL1}.

\begin{lemma}\label{L:99e} 
It holds   that
\begin{align} \label{REG0}
 \bA\in C^{1-}(H^2(\R),\kL(H^1(\R)).
\end{align}
\end{lemma}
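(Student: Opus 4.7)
The plan is to exploit the decomposition \eqref{FOF}, namely
\[
\pi\bA(f)[\ov\omega] = f'\, B_{0,1}(f)[\ov\omega] - B_{1,1}(f)[f,\ov\omega],
\]
and to check separately the $L_2$-boundedness of $\bA(f)[\ov\omega]$ and of $(\bA(f)[\ov\omega])'$, with constants that depend locally Lipschitz-continuously on $f\in H^2(\R)$. The $L_2$-estimate will follow at once from Lemma~\ref{L:99a}$(i)$ together with the embedding $H^2(\R)\hookrightarrow W^1_\infty(\R)$: both $B_{0,1}(f)[\ov\omega]$ and $B_{1,1}(f)[f,\ov\omega]$ are then bounded in $L_2$ by $C(\|f\|_{H^2})\|\ov\omega\|_2$, so $\|\bA(f)[\ov\omega]\|_2\leq C(\|f\|_{H^2})\|\ov\omega\|_2$.

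For the $x$-derivative I will use $\p_x\ov\omega(x-y)=\ov\omega'(x-y)$ and $\p_x(\delta_{[x,y]}h)=\delta_{[x,y]}h'$ to obtain the pointwise identities
\begin{align*}
(B_{0,1}(f)[\ov\omega])' &= B_{0,1}(f)[\ov\omega'] - 2\,B_{2,2}(f,f)[f,f',\ov\omega],\\
(B_{1,1}(f)[f,\ov\omega])' &= B_{1,1}(f)[f,\ov\omega'] + B_{1,1}(f)[f',\ov\omega] - 2\,B_{3,2}(f,f)[f,f,f',\ov\omega],
\end{align*}
so that
\[
\pi(\bA(f)[\ov\omega])' = f''B_{0,1}(f)[\ov\omega] + f'(B_{0,1}(f)[\ov\omega])' - (B_{1,1}(f)[f,\ov\omega])'.
\]
The term $f''B_{0,1}(f)[\ov\omega]$ I will control by $\|f''\|_2\|B_{0,1}(f)[\ov\omega]\|_\infty$, the second factor being bounded via Lemma~\ref{L:99a}$(iii)$ by $C(\|f\|_{H^2})\|\ov\omega\|_{H^\tau}$ for any $\tau\in(1/2,1)$. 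The contributions $B_{0,1}(f)[\ov\omega']$ and $B_{1,1}(f)[f,\ov\omega']$ fall into the scope of Lemma~\ref{L:99a}$(i)$ since every $b$-slot is still occupied by $f\in W^1_\infty(\R)$.

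The hard part will be the three remaining terms $B_{2,2}(f,f)[f,f',\ov\omega]$, $B_{1,1}(f)[f',\ov\omega]$ and $B_{3,2}(f,f)[f,f,f',\ov\omega]$, in which $f'$ has entered a $b$-slot: since $(f')'=f''$ lies only in $L_2(\R)$, the naive bound of Lemma~\ref{L:99a}$(i)$, which would require $\|f''\|_\infty$, is unavailable. This is precisely the situation for which Lemma~\ref{L:99d} was designed; using the symmetry in the $b$-arguments, taking $b_1:=f'\in H^1(\R)$ as the distinguished slot, and filling the remaining $b$- and $a$-positions with $f\in H^r(\R)$ for some $r\in(5/2-\tau,2)$ with $\tau\in(1/2,1)$, will deliver
\[
\|B_{2,2}(f,f)[f',f,\ov\omega]\|_2 + \|B_{1,1}(f)[f',\ov\omega]\|_2 + \|B_{3,2}(f,f)[f',f,f,\ov\omega]\|_2 \leq C(\|f\|_{H^2})\|\ov\omega\|_{H^1}.
\]
Collecting all contributions then yields $\|\bA(f)[\ov\omega]\|_{H^1}\leq C(\|f\|_{H^2})\|\ov\omega\|_{H^1}$.

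For the local Lipschitz dependence on $f$ I will apply identity \eqref{rell} together with its verbatim analogue for the $b$-slots in order to represent, for $f_1,f_2\in H^2(\R)$, every $B_{n,m}$- (or $\ov B_{n,m}$-) term appearing above at $f_1$ minus its counterpart at $f_2$ as a finite sum of $B_{n',m'}$-terms in which one slot is occupied by $f_1-f_2$ or by $(f_1-f_2)'$. Re-running on each such summand precisely the estimates from the previous step, now with that distinguished slot playing the role of the privileged $b_1$ in Lemma~\ref{L:99d}, will produce the bound $\|\bA(f_1)-\bA(f_2)\|_{\kL(H^1)}\leq L(\|f_1\|_{H^2},\|f_2\|_{H^2})\|f_1-f_2\|_{H^2}$. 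This part is technical but mechanical once the $H^1$-boundedness of $\bA(f)$ has been carefully worked out.
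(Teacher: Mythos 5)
Your proposal is correct and essentially reproduces the paper's own argument: the same decomposition \eqref{FOF}, the $L_2$-bound from Lemma~\ref{L:99a}~$(i)$, the same derivative identity (the paper's \eqref{derA}) with the $f''$-term controlled through Lemma~\ref{L:99a}~$(iii)$ and the terms carrying $f'$ in a $b$-slot through the extended operators of Lemma~\ref{L:99d}, and the Lipschitz dependence on $f$ obtained mechanically from \eqref{rell} and multilinearity. The only (minor) difference is that the paper derives the derivative formula rigorously as the $L_2$-limit of translation difference quotients, using the continuity statements of Lemmas~\ref{L:99a}--\ref{L:99d}, rather than by differentiating under the principal value integral, where the terms with $f'$ in a $b$-slot are not absolutely convergent and only make sense as the extensions $\ov B_{n,m}$; also, your sign on the $B_{3,2}$-term agrees with a direct computation, and the discrepancy with \eqref{derA} is immaterial for the estimates.
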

\begin{proof}
Given $f\in H^2(\R)$,  the relation \eqref{FOF} together with Lemma~\ref{L:99a} implies that $\bA(f)\in\kL(L_2(\R)).$
We next show that $\bA(f)[\oo]\in H^1(\R),$ provided that $\oo\in H^1(\R)$.
To this end we let $\{\tau_\e\}_{\e\in\R}$ denote the  $C_0$-group  of right  translations on $L_2(\R)$, that is $\tau_\e f(x):=f(x-\e) $ for $f\in L_2(\R)$, $x,\e\in\R,$ and  we compute for $\e\in(0,1)$ that
\begin{align*}
 \pi\frac{\tau_\e(\bA(f)[\oo]) -\bA(f)[\oo] }{\e}&=\frac{\tau_\e f' -f'}{\e}B_{0,1}(\tau_\e f)[\tau_\e\oo]+f 'B_{0,1}(\tau_\e f )\Big[\frac{\tau_\e \oo -\oo}{\e}\Big]\\[1ex]
 &\hspace{0.424cm}-f'\ov B_{2,2}(\tau_\e f,f)\Big[ \frac{\tau_\e f -f}{\e},\tau_\e f+f,\oo\Big]-\ov B_{1,1}(\tau_\e f)\Big[\frac{\tau_\e f -f}{\e}, \tau_\e\oo\Big]\\[1ex]
 &\hspace{0.424cm}-B_{1,1}(\tau_\e f )\Big[f,\frac{\tau_\e \oo -\oo}{\e}\Big]- \ov B_{3,2}(\tau_\e f,f)\Big[ \frac{\tau_\e f -f}{\e},\tau_\e f+f,f,\oo\Big].
\end{align*}
The convergences 
\begin{align*}
&\tau_\e f\underset{\e\to0}\longrightarrow f  \quad\text{in $H^2(\R),$ } \qquad   \frac{\tau_\e f - f }{\e}\underset{\e\to0}{\longrightarrow} -f' \quad\text{in $H^{1}(\R),$ }\\[1ex]
&\tau_\e \oo\underset{\e\to0}\longrightarrow \oo  \quad\text{in $H^1(\R),$ } \qquad   \frac{\tau_\e \oo - \oo }{\e}\underset{\e\to0}{\longrightarrow} -\oo' \quad\text{in $L_2(\R),$ }
\end{align*}
together with the Lemmas~\ref{L:99a}-\ref{L:99d} enable us to pass to the limit $\e\to0$ in the above relation and to conclude that $\bA(f)[\oo]\in H^1(\R)$ and
\begin{align}
 \pi(\bA(f)[\oo])'&=\pi\bA(f)[\oo']+f''B_{0,1}( f)[\oo]-2f'\ov B_{2,2}(f,f)[f',f,\oo]-\ov B_{1,1}( f)[f', \oo]\nonumber\\[1ex]
 &\hspace{0.424cm}- 2\ov B_{3,2}( f,f)\Big[f',f,f,\oo\Big].\label{derA}
\end{align}
The Lipschitz continuity property \eqref{REG0} is now a direct consequence of  the Lemmas~\ref{L:99a}-\ref{L:99d}.
\end{proof}

In fact,  $\bA$ enjoys the following regularity
\begin{align}\label{RegA}
 \bA\in C^\omega(H^r(\R),\kL(L_2(\R)))\cap C^{\omega} (H^{2}(\R),\kL(H^1(\R))) \qquad\text{for all $r>3/2$.}
\end{align}
The property \eqref{RegA} may be established by using the arguments presented in \cite[Section 5]{M16x} together with the  Lemmas~\ref{L:99a}-\ref{L:99d}.
The lengthy details are left to the interested reader.\medskip

Given $f\in H^2(\R)$, we denote by $\bB(f)$ the operator which corresponds to the right-hand side of the first equation of \eqref{P:1}, namely 
\begin{align}\label{oper}
 \bB(f):=B_{0,1}(f)+f'B_{1,1}(f)[f,\,\cdot\,].
\end{align}
For later purposes we establish the following regularity result.

\begin{lemma}\label{L:reg} It holds that 
\begin{align}\label{regB}
\bB\in C^{\omega}(H^r(\R), \kL(L_2(\R)))\cap C^{\omega}(H^2(\R), \kL(H^1(\R)))\qquad\text{for all $r>3/2$.}
\end{align}
\end{lemma}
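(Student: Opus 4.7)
My plan is to mirror exactly the argument that was sketched for $\bA$ in \eqref{RegA}, since $\bB$ is built from the same multilinear singular integral operators $B_{n,m}$ — indeed the decomposition
\[
\bB(f) = B_{0,1}(f) + f' B_{1,1}(f)[f,\,\cdot\,]
\]
is structurally of the same form as $\pi\bA(f) = f' B_{0,1}(f) - B_{1,1}(f)[f,\,\cdot\,]$, cf.\ \eqref{FOF}. The key inputs are Lemma~\ref{L:99a} for the $L_2$-theory, Lemma~\ref{L:99d} for the $H^1$-theory, and the telescoping identity \eqref{rell}, which lets one pass from $C^{1-}$ to real-analytic regularity.

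For the first assertion $\bB\in C^\omega(H^r(\R),\kL(L_2(\R)))$ with $r>3/2$, I would first observe that the Sobolev embedding $H^r(\R)\hookrightarrow W^1_\infty(\R)$ combined with Lemma~\ref{L:99a}$(ii)$ already gives the $C^{1-}$-version of the statement and, moreover, that the multiplier $f'$ acts boundedly on $L_2(\R)$ with $\|f'\|_\infty\leq C\|f\|_{H^r}$. To upgrade $C^{1-}$ to $C^\omega$ around an arbitrary basepoint $f_0$, I would write $f=f_0+h$ and expand every denominator $[1+(\delta_{[x,y]} f/y)^2]^{-1}$ as a (locally convergent) Neumann series in $\delta_{[x,y]}h/y$, applying \eqref{rell} iteratively. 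The resulting formal series has coefficients which are $(k+1)$-linear singular integral operators in $h$ with values in $\kL(L_2(\R))$, and their norms can be geometrically majorised via Lemma~\ref{L:99a}$(i)$ on a sufficiently small ball in $H^r(\R)$ — this is exactly the manipulation carried out in \cite[Section~5]{M16x} for $\bA$.

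For the second assertion $\bB\in C^\omega(H^2(\R),\kL(H^1(\R)))$, I would follow the translation-group argument used in the proof of Lemma~\ref{L:99e}. Forming the difference quotient $\e^{-1}(\tau_\e(\bB(f)[\oo])-\bB(f)[\oo])$ and passing to the limit $\e\to 0$ (using $\tau_\e f\to f$ in $H^2$, $\e^{-1}(\tau_\e f-f)\to -f'$ in $H^1$, $\tau_\e\oo\to\oo$ in $H^1$ and $\e^{-1}(\tau_\e\oo-\oo)\to-\oo'$ in $L_2$) produces an explicit formula for $(\bB(f)[\oo])'$ in $L_2(\R)$ in which the low-regularity slot $f'\in H^1(\R)$ appears through the extended operators $\ov B_{n,m}$ of Lemma~\ref{L:99d} (applied with $\tau=1$ and $r\in(3/2,2)$), while $\oo'$ appears through ordinary $B_{n,m}$ operators. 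The analyticity in $f$ is then obtained from the same Neumann-series/telescoping expansion, only now the norms of the coefficient operators are estimated by combining Lemma~\ref{L:99a}$(i)$ and the bounds \eqref{REF1}--\eqref{REF2} of Lemma~\ref{L:99d}.

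The step I expect to be purely bookkeeping but the main source of difficulty is the second one, specifically verifying that every term produced by differentiation of $\bB(f)[\oo]$ falls into one of the two classes — genuine $B_{n,m}$-operators (handled by Lemma~\ref{L:99a}) or $\ov B_{n,m}$-operators with the low-regularity factor $f'$ placed in the first $b_1$-slot (handled by Lemma~\ref{L:99d}) — and that the resulting power-series expansion in $h=f-f_0$ converges in $\kL(H^1(\R))$ on a uniform neighbourhood of $f_0\in H^2(\R)$. Since the algebraic pattern is identical to the one already worked out in detail for $\bA$ in \cite[Section~5]{M16x}, I would refer the reader there for the lengthy combinatorial details rather than reproducing them.
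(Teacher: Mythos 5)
Your proposal follows essentially the same route as the paper: Lemma~\ref{L:99a} gives $\bB(f)\in\kL(L_2(\R))$ for $f\in H^r(\R)$, the translation-group difference-quotient argument of Lemma~\ref{L:99e} yields the formula for $(\bB(f)[\oo])'$ in which the terms carrying $f'\in H^1(\R)$ are handled by the extended operators of Lemma~\ref{L:99d}, and the upgrade to analyticity is deferred, exactly as in the paper, to the expansion argument of \cite[Section~5]{M16x}. The only slip is cosmetic: Lemma~\ref{L:99d} requires $\tau\in(1/2,1)$ and $r\in(5/2-\tau,2)$ rather than $\tau=1$, which is harmless since $\oo\in H^1(\R)\hookrightarrow H^\tau(\R)$ for any such $\tau$.
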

\begin{proof} 
Given $f\in H^r(\R)$, with $r>3/2,$ it follows from Lemma~\ref{L:99a} that $\bB(f)\in\kL(L_2(\R)).$
Furthermore, proceeding as in Lemma~\ref{L:99e}, we get, in view of the Lemmas~\ref{L:99a}-\ref{L:99d},  that if $f\in H^2(\R)$ and $\oo\in H^1(\R)$, then $\bB(f)[\oo]\in H^1(\R)$ and
 \begin{align} 
  (\bB(f)[\oo])'&=\bB(f)[\oo']-2\ov B_{2,2摤}(f,f)[f',f,\oo]+f''B_{1,1}(f)[f,\oo]+f'\ov B_{1,1}(f)[f',\oo]\nonumber\\[1ex]
  &\hspace{0.424cm}-2f'\ov B_{3,2}(f,f)[f',f,f,\oo].\label{derB}
 \end{align}
The analyticity  property \eqref{regB} follows  now from the  Lemmas~\ref{L:99a}-\ref{L:99d} by arguing as in \cite[Section~5]{M16x} (we omit again the lengthy details).
\end{proof}\medskip

 \paragraph{\bf On the resolvent set of the adjoint of the double layer potential}
We  are now in the position to address the solvability of \eqref{P:2}.   
First, we  show that, given $f\in H^r(\R)$, with $r>3/2$,    the resolvent set of $ \bA(f)$,  when we view $ \bA(f)$ as an operator in $\kL(L_2(\R)),$ contains the set $[|\lambda|\geq1]\cap\R$. 
This property follows from the uniform bound  (with respect to $f$ and $\lambda$) establish in \eqref{DI}. 
This bound proves crucial also when investigating the resolvent set of $\bA(f)$,  when we regard  $ \bA(f)$ as an element of $\kL(H^1(\R)) $ and $f\in H^2(\R)$, cf. Proposition~\ref{P:I2} below.
\begin{thm}\label{T:I1} 
Given  $f\in H^r(\R)$, $r>3/2$,  and $\lambda\in\R$ with $|\lambda|\geq1$, it holds  
\begin{align*} 
 \lambda-\bA(f)\in{\rm Isom\,}(L_2(\R)).
\end{align*}
\end{thm}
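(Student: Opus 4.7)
The plan is to reduce the statement to a uniform resolvent estimate
\[
\|\lambda\oo-\bA(f)[\oo]\|_2 \geq c(\lambda,f)\,\|\oo\|_2, \qquad \oo\in L_2(\R),
\]
valid for every $\lambda\in\R$ with $|\lambda|\geq1$, the constant $c>0$ depending only on $\lambda$ and $\|f\|_{H^r}$ and bounded below on bounded subsets of $\{|\lambda|\geq1\}\times H^r(\R)$. Granted this ``(DI)'' bound, $\lambda-\bA(f)$ is injective with closed range on $L_2(\R)$; surjectivity will follow by a continuity argument on each of the half-lines $[1,\infty)$ and $(-\infty,-1]$.

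Given $\oo\in L_2(\R)$, I work with the velocity field $\wh v$ from \eqref{V1'} (cf.\ Lemma~\ref{L:A2}). Its restrictions $\wh v_\pm$ to $\0_\pm^0:=[\pm(y-f(x))>0]$ are divergence- and curl-free and decay at infinity by \eqref{LA2}-\eqref{LA3}, hence $\wh v_\pm=\nabla\phi_\pm$ for harmonic potentials $\phi_\pm$ with $\nabla\phi_\pm\in L_2(\0_\pm^0)$. Decomposing the Plemelj jump relation \eqref{V2} into tangential and normal components along $[y=f(x)]$ gives
\[
(\p_\tau\phi_\pm)\sqrt{1+f'^2}=\tfrac12\bigl(\bA(f)[\oo]\mp\oo\bigr),\qquad \p_\nu\phi_-=\p_\nu\phi_+.
\]
Writing $T_\pm:=\p_\tau\phi_\pm$ and $N:=\p_\nu\phi_\pm$, this yields $\oo=\sqrt{1+f'^2}(T_--T_+)$ and $\lambda\oo-\bA(f)[\oo]=\sqrt{1+f'^2}[(\lambda-1)T_--(\lambda+1)T_+]$.

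Integrating the pointwise identity $\mathop{\rm div}(e_2|\nabla\phi_\pm|^2-2(\p_y\phi_\pm)\nabla\phi_\pm)=0$ over $\0_\pm^0$ (the decay at infinity disposes of the far-field contributions, and the embedding $H^r(\R)\hookrightarrow W^1_\infty(\R)$ for $r>3/2$ keeps $e_2\cdot\nu$ bounded away from zero on $[y=f(x)]$) produces the Rellich identity
\[
\|T_\pm\|^2_{L_2(\R)}-\|N\|^2_{L_2(\R)}=2\int_\R f'\,T_\pm\,N\,dx.
\]
Inserting the formulas above and combining the two Rellich identities via Cauchy-Schwarz and pairings against well-chosen linear combinations of $T_\pm$, I rearrange the resulting quadratic form in $(T_-,T_+,N)$ into the required lower bound, uniform for $|\lambda|\geq1$. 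Once (DI) holds, injectivity and closed range are immediate. For surjectivity, on each of the half-lines $[1,\infty)$ and $(-\infty,-1]$, the subset on which $\lambda-\bA(f)\in\Isom(L_2(\R))$ is open (perturbation of isomorphisms), closed (by (DI) together with continuity of $\lambda\mapsto\lambda-\bA(f)$ into $\kL(L_2(\R))$), and nonempty since for $|\lambda|>\|\bA(f)\|_{\kL(L_2(\R))}$---finite by \eqref{FOF} and Lemma~\ref{L:99a}---the Neumann series supplies the inverse. Hence each subset coincides with its ambient half-line.

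The main obstacle is the Rellich-based extraction of (DI). The identity controls $\|T_\pm\|_2^2-\|N\|_2^2$ only modulo a friction term of size $\|f'\|_\infty\|T_\pm\|_2\|N\|_2$ which is \emph{not} small in general and cannot simply be absorbed, so no naive coercivity argument works. The truly delicate regime is $|\lambda|=1$, where one of the coefficients $(\lambda\mp 1)$ vanishes and (DI) reduces to a Verchota-type invertibility of $1\pm\bA(f)$ on $L_2(\R)$ for the adjoint of the double-layer potential on the Lipschitz graph $[y=f(x)]$; this also clarifies why the resolvent set cuts off exactly at $|\lambda|=1$.
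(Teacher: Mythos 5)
Your setup is the same as the paper's: reduce everything to the uniform estimate \eqref{DI}, realize $\oo$ through the Biot--Savart field of Lemma~\ref{L:A2}, use the Plemelj formula \eqref{V2} to identify the tangential traces with $\tfrac12(\bA(f)\mp1)[\oo]$ and the common normal trace, derive a Rellich identity from a divergence-free quadratic expression, and finish with the method of continuity (Neumann series for large $|\lambda|$). The genuine gap is that the one step which actually proves the theorem --- extracting \eqref{DI} uniformly for $|\lambda|\geq1$ from the two Rellich identities --- is only asserted (``pairings against well-chosen linear combinations'') and then effectively retracted in your last paragraph: you claim the friction term $2\int_\R f'T_\pm N\,dx$ cannot be absorbed, that no coercivity argument works, and that at $|\lambda|=1$ the estimate ``reduces to a Verchota-type invertibility of $1\pm\bA(f)$'', which is precisely (a special case of) the statement to be proven. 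As written, the endpoint case is circular (or outsourced to an unproved external theorem) and the case $|\lambda|>1$ is not carried out either.

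The step can be done, and your diagnosis of the obstruction is off: the constant in \eqref{DI} is allowed to depend on $\|f\|_{H^r}\leq M$, so the friction term \emph{is} absorbable. In your notation, Young's inequality in $\|T_\pm\|_2^2-\|N\|_2^2=2\int_\R f'T_\pm N\,dx$ (split the right side as $\tfrac12\|T_\pm\|_2^2+2\|f'\|_\infty^2\|N\|_2^2$) gives $\|(\bA(f)\mp1)[\oo]\|_2\leq C(M)\|{\rm F}^\nu\|_2$ for both signs, hence, taking the difference, the $\lambda$-independent bound $\|\oo\|_2\leq C\|{\rm F}^\nu\|_2$, which is \eqref{pm2}. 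Then, for $|\lambda|\geq1$, insert $|(\bA(f)\pm1)[\oo]|^2=|(\lambda-\bA(f))[\oo]|^2-2(\lambda\pm1)\oo(\lambda-\bA(f))[\oo]+(\lambda\pm1)^2|\oo|^2$ into the two Rellich identities and combine them with weights $\lambda\mp1$: all terms mixing $\oo$ with ${\rm F}^\nu$ or with $(\lambda-\bA(f))[\oo]$ cancel, leaving the identity in the proof of Theorem~\ref{T:I1} whose left side is $\int_\R\big[(\lambda^2-1)|\oo|^2+4|{\rm F}^\nu|^2\big](1+f'^2)^{-1}dx$. One more application of Young's inequality and $\lambda^2-1\geq0$ then control $\|{\rm F}^\nu\|_2$ --- not $\oo$ directly --- by $C(M)\|(\lambda-\bA(f))[\oo]\|_2$, uniformly down to $|\lambda|=1$, and combining with $\|\oo\|_2\leq C\|{\rm F}^\nu\|_2$ yields \eqref{DI} with no degeneration at the endpoint and no appeal to Verchota. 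A further small omission: the decay \eqref{LA3} that justifies the Stokes/Rellich computation is only available for $\oo\in C_0^\infty(\R)$, so you should first reduce \eqref{DI} to $f,\oo\in C^\infty_0(\R)$ by density, using the continuity \eqref{RegA}, as the paper does.
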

\begin{proof}
Let  $M>0$ be given. We prove that there exists a constant $C=C(M)>0$ such that for all   $f\in H^r(\R)$ with $\|f\|_{H^r}\leq M$,  $\ov\omega\in L_2(\R)$, and $\lambda\in\R$ with $|\lambda|\geq1$ we have
 \begin{align}\label{DI}
\|\ov\omega\|_{2}\leq C\|(\lambda-\bA(f))[\ov\omega]\|_2.
\end{align}
Having established \eqref{DI}, the claim follows from  the method of continuity, cf. \cite[Proposition 1.1.1]{Am95}, as, for each  $f\in H^r(\R)$,  the spectrum of $\bA(f)$ is compact and therefore
$\lambda-\bA(f)$ is  invertible  if $\lambda$ is large. 
In view of \eqref{RegA}, it suffices to prove \eqref{DI} for   $f\in C^\infty_0(\R)$   and   $\ov\omega\in C_0^\infty(\R)$.

We recall from the Lemma \ref{L:A2} that the restrictions $\wh v_\pm:=\wh v\big|_{\0_\pm^0}$, with $\0_\pm^0$ as in Lemma \ref{L:A2}, of the function $\wh v$ defined in \eqref{V1'}
have the following properties
\begin{align}\label{diro}
 \text{$ \wh v_\pm  \in  C (\ov{\0_\pm^0})\cap C^1({\0_\pm^0})$ \qquad and\qquad  ${\rm div \, } \wh v_\pm={\rm rot \, } \wh v_\pm=0$\quad\text{in $\0_\pm^0$.}}
\end{align}
Moreover,    the Plemelj formula \eqref{V2}  together with the Lemmas~\ref{L:99e}-\ref{L:reg}  ensures the restrictions $F_\pm:=(F_\pm^{1}, F_\pm^2):=\wh v_\pm\big|_{[y=f(x)]}$  satisfy
\begin{align}\label{BB1}
  \langle F_\pm|(1,f')\rangle=\frac{1}{2}(\bA(f)\mp 1 )[\oo]\in H^1(\R),\qquad {\rm F}^\nu:=\langle F_\pm|(-f',1)\rangle=\frac{1}{2\pi}\bB(f)[\oo]\in H^1(\R).
\end{align}
Letting $\tau$ and $\nu:=(\nu_1,\nu_2)$ denote the tangent and the   unit outward normal vectors at $\p \0_-^0$, we write
$F_\pm=F_\pm^\tau+F_\pm^\nu$, where 
\[\text{$F_\pm^\tau:=\langle F_\pm|\tau\rangle\tau$\qquad  and \qquad  $F_\pm^\nu:=\langle F_\pm|\nu\rangle\nu=\frac{{\rm F}^\nu}{\sqrt{1+f'^2}}\nu\in H^1(\R,\R^2)$.}\]
We now introduce the bilinear form $\B:L_2(\R,\R^2)\times L_2(\R,\R^2)\to\R$ by the formula
\[
\B(F,G):=\int_{\R}G^2\langle F|(-f',1) \rangle +F^2\langle G|(-f',1) \rangle -\langle  F|G \rangle\, dx,\qquad F=(F^1,F^2),\, G=(G^1,G^2),
\]
 and we remark that
\begin{align*}
 (i)&\quad \B(F,G)=\int_{\R} \langle F|  G  \rangle   \, dx \qquad\text{if $F=\langle F|\nu\rangle\nu$, $G=\langle G|\nu\rangle\nu$,}\\[1ex]
 (ii)& \quad \B(F,G)=-\int_{\R}\langle F|\tau\rangle  \langle G|\tau \rangle   \, dx  \qquad\text{if $F=\langle F |\tau\rangle\tau$, $G=\langle G|\tau\rangle\tau$,}\\[1ex]
 (iii)& \quad \B(F,G)=\int_{\R} F^2 \langle G|(-f',1)\rangle  \, dx  \qquad\text{if $F=\langle F|\tau\rangle\tau$, $G=\langle G|\nu\rangle\nu$.}
\end{align*}
We now claim that 
\begin{align}\label{BB}
 \B(F_\pm,F_\pm)=0.
\end{align}
In order to prove \eqref{BB} we choose  a sequence  $(\varphi_n)_n\subset C^\infty_0(\R^2, [0,1]) $ with the property that $\varphi_n=1$ in $[|(x,y)|\leq n]$,  $\varphi_n=0$ in $[|(x,y)|\geq n+1]$,
and $\sup_{n}\|\nabla \varphi_n\|_\infty<\infty.$ Using Lebesgue's dominated convergence theorem and Stokes' theorem together with \eqref{diro}, we get
\begin{align*}
 \B(F_-,F_-)&=\int_{[y=f(x)]}\Big\langle\Big(\begin{array}{ccc}
                             2F_-^1F_-^2\\[0.1ex]
                             (F_-^2)^2-(F_-^1)^2
                            \end{array}\Big)\Big|\nu\Big\rangle\, d\sigma
                            =\lim_{n\to\infty}\int_{[y=f(x)]}\Big\langle\varphi_n\Big(\begin{array}{ccc}
                             2\wh v_-^1\wh v_-^2\\[0.1ex]
                             (\wh v_-^2)^2-(\wh v_-^1)^2
                            \end{array}\Big)\Big|\nu\Big\rangle\, d\sigma\\[1ex]                            
                            &=\lim_{n\to\infty}\int_{\0_-^0}{\rm div\, }\Big(\varphi_n \Big(\begin{array}{ccc}
                             2\wh v_-^1\wh v_-^2\\[0.1ex]
                             (\wh v_-^2)^2-(\wh v_-^1)^2
                            \end{array}\Big)\Big) \, d(x,y)\\[1ex]
                            &= \lim_{n\to\infty}\int_{\0_-^0}\Big\langle\nabla \varphi_n\Big| \Big(\begin{array}{ccc}
                             2\wh v_-^1\wh v_-^2\\[0.1ex]
                             (\wh v_-^2)^2-(\wh v_-^1)^2
                            \end{array}\Big) \Big\rangle\, d(x,y)=0,
\end{align*}
the last equality being a consequence of  \eqref{LA3} and of  $\sup_{n}\|\nabla \varphi_n\|_\infty<\infty$. 
This proves \eqref{BB} for $F_-$. The proof for $F_+$ is similar.

Using \eqref{BB1}, \eqref{BB}, and the relations $(i)-(iii)$, we now obtain the following Rellich formula 
\begin{align}\label{Ea1}
 0&=\B(F_\pm,F_\pm)= \B(F_\pm^\tau,F_\pm^\tau)+2 \B(F_\pm^\tau,F_\pm^\nu)+ \B(F_\pm^\nu,F_\pm^\nu)\nonumber\\[1ex]
 &=\int_\R\frac{1}{1+f'^2}\Big[| {\rm F}^\nu|^2+f' {\rm F}^\nu(\bA(f)\mp1)[\oo]-    \frac{1}{4}|(\bA(f)\mp1)[\oo]|^2\Big]\, dx.
 \end{align}
    Young's inequality and \eqref{Ea1} imply that there exists a positive constant $C=C(M)$ such that 
\begin{align*} 
\|(\bA(f)\pm1)[\oo]\|_2\leq C\| {\rm F}^\nu\|_2
\end{align*}
for all $f\in C^\infty_0(\R)$ with $\|f\|_{H^r}\leq M$ and   $\ov\omega\in C_0^\infty(\R).$
The latter inequality yields in particular 
\begin{align} \label{pm2}
  \|\oo\|_2=\frac{1}{2}\|(\bA(f)+1)[\oo]-(\bA(f)-1)[\oo]\|_2\leq C\| {\rm F}^\nu\|_2.
\end{align}

Let  $\lambda\in\R$ satisfy $|\lambda|\geq1$. Since
\begin{align*}
  |(\bA(f)\pm1)[\oo]|^2=  |(\lambda-\bA(f))[\oo] |^2-2(\lambda\pm1)\oo(\lambda-\bA(f))[\oo] + (\lambda\pm 1)^2|\oo|^2,
\end{align*}
  we deduce, together with \eqref{Ea1}, after  eliminating the mixed term, that  
\begin{align*} 
 \int_\R\frac{1}{1+f'^2}\big[(\lambda^2-1)|\oo|^2+4| {\rm F}^\nu|^2\big]\, dx=\int_\R\frac{1}{1+f'^2}\big[|(\lambda-\bA(f))[\oo]|^2+4f' {\rm F}^\nu(\lambda-\bA(f))[\oo] \big]\, dx.
 \end{align*}
 This relation together with Young's inequality allows us to conclude that there exists a constant  $C=C(M)$ with the property that
 \[
 (\lambda^2-1)\|\oo\|_2+\| {\rm F}^\nu\|_2\leq C \|(\lambda-\bA(f))[\oo]\|_2.
 \]
 The desired property \eqref{DI} follows from \eqref{pm2}.
 \end{proof}

We are now in the position to study the  invertibility of $\lambda-\bA(f)$ in   $\kL(H^1(\R))$, when  requiring additionally that $f\in H^2(\R).$
 
\begin{prop}\label{P:I2} 
Given  $f\in H^2(\R)$  and $\lambda\in\R$ with $|\lambda|\geq 1,$  it holds   that
\begin{align*} 
 \lambda-\bA(f)\in {\rm Isom}(H^1(\R)).
\end{align*}
\end{prop}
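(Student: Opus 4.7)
The plan is to combine the $L_2$-invertibility from Theorem~\ref{T:I1} with an a priori $H^1$-estimate derived from the commutator formula \eqref{derA}, and then to close the argument by the method of continuity in the spectral parameter $\lambda$.

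\textbf{Boundedness and injectivity on $H^1$.} By Lemma~\ref{L:99e}, $\bA(f)\in\kL(H^1(\R))$ whenever $f\in H^2(\R)$, so $\lambda-\bA(f)$ is a bounded operator on $H^1(\R)$. Since $H^1(\R)\hookrightarrow L_2(\R)$, Theorem~\ref{T:I1} yields injectivity on $H^1(\R)$ for free.

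\textbf{A priori $H^1$-estimate uniform in $\lambda$.} For $\oo\in H^1(\R)$, set $g:=(\lambda-\bA(f))[\oo]\in H^1(\R)$. Differentiating and using the identity \eqref{derA} of Lemma~\ref{L:99e} gives
\begin{equation*}
(\lambda-\bA(f))[\oo']=g'+R(f)[\oo],
\end{equation*}
where $\pi R(f)[\oo]=f''B_{0,1}(f)[\oo]-2f'\ov B_{2,2}(f,f)[f',f,\oo]-\ov B_{1,1}(f)[f',\oo]-2\ov B_{3,2}(f,f)[f',f,f,\oo]$. Theorem~\ref{T:I1} applied in $L_2(\R)$ to this equation delivers
\begin{equation*}
\|\oo'\|_2\leq C\big(\|g\|_{H^1}+\|R(f)[\oo]\|_2\big),
\end{equation*}
with $C=C(\|f\|_{H^r})$, $r\in(3/2,2)$, independent of $\lambda$ as long as $|\lambda|\ge 1$. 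The heart of the argument is to bound $R(f)[\oo]$ in $L_2$. Here Lemma~\ref{L:99d}(i) is crucial: each $\ov B_{n,m}$-summand places $f'\in H^1(\R)$ into the first (``slow'') $b$-slot, for which $H^1$-regularity is precisely the hypothesis of \eqref{REF1}. Combined with Lemma~\ref{L:99a}(iii) (to control $B_{0,1}(f)[\oo]$ in $L_\infty$ for the term $f''B_{0,1}(f)[\oo]$), this yields
\begin{equation*}
\|R(f)[\oo]\|_2\leq C(\|f\|_{H^2})\,\|\oo\|_{H^\tau}\qquad\text{for any }\tau\in(1/2,1).
\end{equation*}
The interpolation inequality $\|\oo\|_{H^\tau}\leq C\|\oo\|_2^{1-\tau}\|\oo\|_{H^1}^\tau$, Young's inequality with a small parameter, and the $L_2$-bound $\|\oo\|_2\leq C\|g\|_{H^1}$ from Theorem~\ref{T:I1} allow absorption of $\|\oo\|_{H^1}$ into the left-hand side, producing
\begin{equation*}
\|\oo\|_{H^1}\leq C(\|f\|_{H^2})\,\|(\lambda-\bA(f))[\oo]\|_{H^1},\qquad |\lambda|\ge 1,
\end{equation*}
with $C$ independent of $\lambda$.

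\textbf{Method of continuity.} The map $\lambda\mapsto \lambda-\bA(f)$ is an affine (hence continuous) family in $\kL(H^1(\R))$. For $|\lambda|$ sufficiently large, $\lambda-\bA(f)=\lambda(I-\bA(f)/\lambda)$ is invertible on $H^1(\R)$ via the Neumann series. Combining this with the uniform a priori bound above, the method of continuity (\cite[Proposition~1.1.1]{Am95}), applied separately on the connected components $[1,\infty)$ and $(-\infty,-1]$, propagates invertibility on $H^1(\R)$ to all $\lambda$ with $|\lambda|\ge 1$.

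\textbf{Main obstacle.} The delicate step is closing the a priori estimate. The adapted operators $\ov B_{n,m}$ of Lemma~\ref{L:99d} are indispensable: they accommodate the natural regularity $f'\in H^1(\R)$ (below the $W^1_\infty$ threshold demanded by Lemma~\ref{L:99a}), but only at the price of requiring $\oo\in H^\tau$ with $\tau>1/2$ rather than $\oo\in L_2$. Bridging this gap by interpolation without spoiling the $\lambda$-independence of the final constant is the technical crux, and the condition $|\lambda|\geq 1$ is exactly what makes the uniform $L_2$-bound from Theorem~\ref{T:I1} strong enough to absorb the lower-order terms.
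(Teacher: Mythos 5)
Your proposal is correct and follows essentially the same route as the paper: differentiate via \eqref{derA}, bound the lower-order term ($T_{0,\rm lot}$ in the paper's notation, your $R(f)$) in $L_2$ by $\|\oo\|_{H^\tau}$ through Lemmas~\ref{L:99a} and \ref{L:99d}, absorb via interpolation and the $\lambda$-uniform $L_2$-estimate \eqref{DI} of Theorem~\ref{T:I1}, and conclude by the method of continuity. The only cosmetic difference is that the paper states the absorbed inequality in the reverse direction before invoking \eqref{DI} a second time, which is mathematically identical to your argument.
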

\begin{proof}
As in the previous theorem, it suffices to prove that, given $M>0$, there exists a constant $C_1=C_1(M)$ such that for all $f\in H^2(\R)$ with $\|f\|_{H^2}\leq M$, $\lambda\in\R$ with $|\lambda|\geq1$, and $\oo\in H^1(\R)$ we have  
 \begin{align}\label{FE}
  \|\oo\|_{H^1}\leq C_1\|(\lambda-\bA(f))[\oo]\|_{H^1}.
 \end{align}
 In view of \eqref{DI}, we are left to estimate  the term $\|((\lambda-\bA(f))[\oo])'\|_2$. 
 To this end, we  infer from \eqref{derA} that 
 \begin{align}\label{DERR}
       ((\lambda-\bA(f))[\oo])'=(\lambda-\bA(f))[\oo'] -T_{0,\rm lot}(f)[\oo], 
 \end{align}
 where, given $f\in H^2(\R)$ and $\oo \in H^1(\R),$ we have set
 \begin{align}
  \pi T_{0,\rm lot}(f)[\oo]&:=f''B_{0,1}( f)[\oo]-2f'\ov B_{2,2}(f,f)[f',f,\oo]-\ov B_{1,1}( f)[f', \oo]\nonumber\\[1ex]
  &\hspace{0.424cm}- 2\ov B_{3,2}( f,f)[f',f,f,\oo].\label{0lot}
 \end{align}
We now fix $\tau\in (1/2,1)$ and  $r\in (5/2-\tau,2)$. 
The Lemmas~\ref{L:99a}-\ref{L:99d} ensure that there exists a constant $C_0=C_0(M)$ such that 
  \begin{align}\label{0lote}
 \|T_{0,\rm lot}(f)[\oo] \|_2\leq C_0 \|\oo\|_{H^\tau} 
\end{align}
for all $f\in H^2(\R)$ with $\|f\|_{H^2}\leq M$ and all $\oo\in H^1(\R)$.
Using Young's inequality,   \eqref{DI}, \eqref{DERR},  \eqref{0lote}, and the inequality $\|\oo\|_{H^\tau}\leq \|\oo\|_2^{1-\tau}\|\oo\|_{H^1}^\tau,$ we conclude that 
\begin{align*}
 \|(\lambda-\bA(f))[\oo]\|_{H^1}\geq& \frac{1}{2C}\|\oo\|_{H^1}-C_0\|\oo\|_{H^\tau}\geq \frac{1}{4C}\|\oo\|_{H^1}-\wt C_0\|\oo\|_{2},
\end{align*}
where $C=C(M)$ denotes  the constant in \eqref{DI} and $\wt C_0$ depends only on $  C $ and $C_0$.
The estimate \eqref{FE} follows now by appealing once more to \eqref{DI}. 
\end{proof}

Arguing as in Proposition~\ref{P:I2}, it can be shown  that the following general result holds.
\begin{rem}\label{R:IR} 
Given  $f\in H^k(\R)$, $k\geq 2$,  and $\lambda\in\R$ with $|\lambda|\geq 1,$  it holds   that
\begin{align*} 
 \lambda-\bA(f)\in {\rm Isom}(H^{k-1}(\R)).
\end{align*}
\end{rem}
 
 We now establish an invertibility  result for the double layer potential, that is for the adjoint of $\bA(f)$, which is needed in the proof of Theorem \ref{MT2}. 
 
 \begin{lemma}\label{L:99f}
 Given  $f\in H^2(\R)$, the adjoint $(\bA(f))^*\in\kL(L_2(\R))$ of  $\bA(f)\in\kL(L_2(\R))$ is the operator
 \[
(\bA(f))^*[\varphi](x)=\frac{1}{\pi}\PV\int_\R\frac{(f(x)-f(x-y))-yf'(x-y)}{y^2+(f(x)-f(x-y))^2}\varphi(x-y)\, dy, \qquad \varphi\in L_2(\R).
\]
Moreover, for each $M>0$, there exists a constant $C=C(M)$ such that 
 \begin{align}\label{FE2}
  \|\varphi\|_{H^1}\leq C_1\|(\lambda-(\bA(f))^*)[\varphi]\|_{H^1}
 \end{align}
 for all $\|f\|_{H^2}\leq M$, $\lambda\in\R$ with $|\lambda|\geq 1,$  and $\varphi\in H^1(\R)$.
 In particular $\lambda-(\bA(f))^*\in {\rm Isom}(H^1(\R))$ for all $\lambda\in\R$ with $|\lambda|\geq 1.$  
\end{lemma}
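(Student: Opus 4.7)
The plan is to derive the adjoint formula by Fubini, to obtain $L_2$-invertibility of $\lambda-(\bA(f))^*$ for free from Theorem~\ref{T:I1}, and to replicate the interpolation argument of Proposition~\ref{P:I2} in order to lift the uniform $L_2$-bound to $H^1$.

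For the adjoint formula, I would substitute $z=x-y$ to rewrite
\[
\pi\bA(f)[\oo](x)=\PV\int_\R\frac{(x-z)f'(x)-(f(x)-f(z))}{(x-z)^2+(f(x)-f(z))^2}\,\oo(z)\,dz,
\]
then compute $\int\bA(f)[\oo]\,\varphi\,dx$ and swap the order of integration. Fubini is legitimate because $\bA(f)\in\kL(L_2(\R))$ by \eqref{FOF} and Lemma~\ref{L:99a}, with the principal value handled exactly as in Lemma~\ref{L:A2}; a further substitution $y=z-x$ in the inner integral then yields the claimed formula. The $L_2$-estimate comes for free: since $\lambda\in\R$, the adjoint of $\lambda-\bA(f)\in\kL(L_2(\R))$ is $\lambda-(\bA(f))^*$, so the uniform bound $\|(\lambda-\bA(f))^{-1}\|_{\kL(L_2)}\leq C(M)$ furnished by Theorem~\ref{T:I1} transfers to the adjoint and gives $\|\varphi\|_2\leq C(M)\|(\lambda-(\bA(f))^*)[\varphi]\|_2$ for all $\|f\|_{H^2}\leq M$, $|\lambda|\geq 1$, and $\varphi\in L_2(\R)$.

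The heart of the argument is then a commutator identity of the form
\[
((\bA(f))^*[\varphi])'=(\bA(f))^*[\varphi']+T_{0,\mathrm{lot}}^*(f)[\varphi],
\]
paralleling \eqref{DERR}, in which $T_{0,\mathrm{lot}}^*(f)$ is a finite combination of operators from the $B_{n,m}$ and $\ov B_{n,m}$ families built from $f$, $f'$ and $f''$. To derive it, I would first observe that the kernels of $B_{0,1}(f)$ and $B_{1,1}(f)[f,\,\cdot\,]$ are antisymmetric in their two variables, hence
\[
\pi(\bA(f))^*[\varphi]=B_{1,1}(f)[f,\varphi]-B_{0,1}(f)[f'\varphi],
\]
and then run the translation-group argument of Lemma~\ref{L:99e}: form the difference quotient $\e^{-1}(\tau_\e((\bA(f))^*[\varphi])-(\bA(f))^*[\varphi])$, telescope each factor via \eqref{rell}, and pass to the limit $\e\to 0$ using $f\in H^2(\R)$, $\varphi\in H^1(\R)$, and the embedding $H^1(\R)\hookrightarrow L_\infty(\R)$ (which ensures $f'\varphi\in H^1(\R)$). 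By Lemmas~\ref{L:99a}--\ref{L:99d}, applied with any fixed $\tau\in(1/2,1)$ and $r\in(5/2-\tau,2)$, every summand of $T_{0,\mathrm{lot}}^*(f)[\varphi]$ is controlled in $L_2(\R)$ by $C(M)\|\varphi\|_{H^\tau}$.

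With these two ingredients the conclusion is routine: exactly as in the proof of Proposition~\ref{P:I2}, the commutator identity combined with the $L_2$-bound, the interpolation $\|\varphi\|_{H^\tau}\leq\|\varphi\|_2^{1-\tau}\|\varphi\|_{H^1}^\tau$, and Young's inequality yields \eqref{FE2}, and the isomorphism property in $\kL(H^1(\R))$ then follows from \eqref{FE2} together with the already-established $L_2$-invertibility (with surjectivity obtained by the method of continuity as in Theorem~\ref{T:I1}). The main obstacle will be the algebraic bookkeeping in the commutator identity: each term generated by differentiating $B_{1,1}(f)[f,\varphi]$ and $B_{0,1}(f)[f'\varphi]$ must be recognised as an operator of $B_{n,m}$ or $\ov B_{n,m}$ type so that the estimates of Lemmas~\ref{L:99a}--\ref{L:99d} apply. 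This is tedious but entirely parallel to the computations leading to \eqref{derA} and \eqref{derB}.
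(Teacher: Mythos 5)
Your proposal is correct and follows essentially the same route as the paper: verify the adjoint formula by duality, transfer the uniform $L_2$-bound of Theorem~\ref{T:I1} to $\lambda-(\bA(f))^*$, establish the commutator identity $((\bA(f))^*[\varphi])'=(\bA(f))^*[\varphi']+T^*_{0,\rm lot}(f)[\varphi]$ with $T^*_{0,\rm lot}$ built from the $B_{n,m}$/$\ov B_{n,m}$ operators (your representation $\pi(\bA(f))^*=B_{1,1}(f)[f,\,\cdot\,]-B_{0,1}(f)[f'\,\cdot\,]$ is exactly what the paper uses implicitly), estimate it by $C\|\varphi\|_{H^\tau}$, and conclude via the interpolation--Young argument of Proposition~\ref{P:I2} together with the method of continuity.
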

\begin{proof}
 It is easy to verify that $  (\bA(f))^*\in\kL(L_2(\R))$ is indeed the adjoint of $\bA(f)\in\kL(L_2(\R))$, while the property that $\lambda-\bA(f)\in {\rm Isom}(L_2(\R))$ for all $\lambda\in\R$ with $|\lambda|\geq 1$ follows from Theorem \ref{T:I1}.
  Moreover, the Lemmas \ref{L:99a}-\ref{L:99d} imply that   $\lambda- (\bA(f))^*\in\kL(H^1(\R))$, with
 \[
 ((\lambda- (\bA(f))^*)[\varphi])'=((\lambda- (\bA(f))^*)[\varphi']-T_{0,\rm lot}^*(f)[\varphi],
 \]
 where
 \begin{align*}
 \pi T_{0,\rm lot}^*(f)[\varphi]:=-B_{0,1}(f)[f''\varphi]+\ov B_{1,1}(f)[f',\varphi]+ 2\ov B_{2,2}(f,f)[f',f,f'\varphi]- 2\ov B_{3,2}(f,f)[f',f,f,\varphi].
 \end{align*}
For $\|f\|_{H^2}\leq M,$  the Lemmas \ref{L:99a}-\ref{L:99d} lead us to the following estimate
\begin{align*}
 \|\pi T_{0,\rm lot}^*(f)[\varphi]\|_2\leq C_0\|\varphi\|_{H^{3/4}},
 \end{align*}
 with a constant $C_0=C_0(M)$. 
 Letting $C=C(M)$ denote the constant in  \eqref{DI}, we additionally get  
 \begin{align*}
\|\varphi\|_{2}\leq C\|(\lambda-(\bA(f))^*)[\varphi]\|_2 
\end{align*}
 for all $f\in H^2(\R)$ with $\|f\|_{H^2}\leq M$,  $\varphi\in L_2(\R)$, and $\lambda\in\R$ with $|\lambda|\geq1$,   and therefore we may argue as in the proof of Proposition \ref{P:I2}
 in order to obtain the remaining claims.
\end{proof}

\section{The Muskat problem with surface tension}\label{Sec3}
In this section we address the well-posedness of the Muskat problem  with surface tension, and therefore we assume throughout   that $\sigma>0.$ 
Taking advantage of Theorem~\ref{T:I1} and of the structure of the curvature term, we first formulate the system \eqref{P}  as a quasilinear evolution problem for the free boundary $f$ only, cf. \eqref{Pest1}.
Subsequently, we disclose the parabolic character of \eqref{Pest1}, and this enables us to use  abstract results for  quasilinear parabolic problems due to H. Amann \cite{Am93, Am86, Am88}, see \cite[Theorem 1.5]{M16x} for the 
precise statements. \medskip

\paragraph{\bf The abstract formulation}
We start by solving \eqref{P:2}.
For our approach it is important to  point  out the quasilinear structure of the equation \eqref{P:2}  which is a result  of the linearity of left-hand side of \eqref{P:2} with respect to the highest  derivative of $f$.
Concerning  this issue, it is convenient to study the solvability for $\oo$ of the equation  
\begin{equation}\label{P:2'}
b_\mu\Big[\sigma \frac{h'''}{(1+f'^2)^{3/2}}-3\sigma\frac{f'{f''}h''}{(1+f'^2)^{5/2}}-\Theta h'\Big]=(1+a_\mu\bA(f))[\ov\omega],
\end{equation}
where
\[
b_\mu:=\frac{2k}{\mu_-+\mu_+}.
\]
 If we  replace $h$ by $f$, then \eqref{P:2'} is clearly equivalent to \eqref{P:2}.
 Since  the values of the positive constants $b_\mu$ and $\sigma$ are not relevant for the further analysis we set $b_\mu=\sigma=1$.  

\begin{prop}\label{P1} Given $f\in H^2(\R) $   and $h\in H^3(\R)$,  there exists a unique solution $\oo:=\oo(f)[h]$ to  \eqref{P:2'} and
\begin{align}\label{RegO}
 \oo\in C^\omega(H^2(\R), \kL(H^3(\R), L_2(\R))).
\end{align}
\end{prop}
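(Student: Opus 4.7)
Since the equation \eqref{P:2'} is linear in $\oo$, the plan is to write it abstractly as
\[
(1+a_\mu \bA(f))[\oo] = \mathcal{L}(f)[h], \qquad \mathcal{L}(f)[h]:=\frac{h'''}{(1+f'^2)^{3/2}}-3\frac{f'f''h''}{(1+f'^2)^{5/2}}-\Theta h',
\]
and then solve for $\oo$ by inverting $1+a_\mu\bA(f)$ in $\kL(L_2(\R))$, tracking the analytic dependence on $f$ throughout.

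For the invertibility, I will use that the Atwood number satisfies $|a_\mu|<1$. If $a_\mu=0$ the operator on the left is the identity; otherwise $\lambda:=-1/a_\mu$ satisfies $|\lambda|>1$, and since $f\in H^2(\R)\hookrightarrow H^r(\R)$ for some $r\in(3/2,2)$, Theorem~\ref{T:I1} gives $\lambda-\bA(f)\in\mathrm{Isom}(L_2(\R))$. Multiplication by $-a_\mu$ then yields $1+a_\mu\bA(f)\in\mathrm{Isom}(L_2(\R))$. Combining this with the analyticity statement \eqref{RegA}, namely $\bA\in C^\omega(H^2(\R),\kL(L_2(\R)))$, and with the analyticity of operator inversion on the open set of invertibles, the map $f\mapsto (1+a_\mu\bA(f))^{-1}$ is real-analytic from $H^2(\R)$ into $\kL(L_2(\R))$.

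Next I will verify that $\mathcal{L}(f)\in\kL(H^3(\R),L_2(\R))$ depends real-analytically on $f\in H^2(\R)$. The coefficient $\alpha(f):=(1+f'^2)^{-3/2}$ satisfies $\alpha(f)-1\in H^1(\R)$ with analytic dependence on $f$, because $z\mapsto (1+z^2)^{-3/2}-1$ is a real-analytic function vanishing at $0$, $f'\in H^1(\R)$, and $H^1(\R)$ is a Banach algebra. In particular $\alpha(f)\in L_\infty(\R)$ analytically in $f$, so $h\mapsto \alpha(f)h'''$ defines an element of $\kL(H^3(\R),L_2(\R))$ depending analytically on $f$. Similarly the coefficient $\beta(f):=-3f'f''(1+f'^2)^{-5/2}$ is the product of $f'f''\in L_2(\R)$ with an $H^1(\R)$-analytic factor, so $\beta(f)\in L_2(\R)$ analytically in $f$, and the bound $\|\beta(f)h''\|_2\le\|\beta(f)\|_2\|h''\|_\infty\lesssim \|\beta(f)\|_2\|h\|_{H^3}$ (using $H^2(\R)\hookrightarrow L_\infty(\R)$) shows that $h\mapsto\beta(f)h''$ lies in $\kL(H^3(\R),L_2(\R))$ with analytic dependence. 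The remaining term $-\Theta h'$ is $f$-independent, so altogether $\mathcal{L}\in C^\omega(H^2(\R),\kL(H^3(\R),L_2(\R)))$.

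Combining these pieces, I define
\[
\oo(f)[h]:=\bigl(1+a_\mu\bA(f)\bigr)^{-1}\mathcal{L}(f)[h],
\]
which is the unique solution of \eqref{P:2'} in $L_2(\R)$, and the stated regularity \eqref{RegO} follows by composing the real-analytic maps $f\mapsto (1+a_\mu\bA(f))^{-1}\in\kL(L_2(\R))$ and $f\mapsto \mathcal{L}(f)\in\kL(H^3(\R),L_2(\R))$. The only point that is not entirely routine is the need for the strict inequality $|a_\mu|<1$ so that $|{-1/a_\mu}|\ge 1$ feeds correctly into Theorem~\ref{T:I1}; beyond that, the argument is essentially an assembly of the invertibility result of Section~\ref{Sec2} with the analyticity property \eqref{RegA} and the Banach algebra structure of $H^1(\R)$.
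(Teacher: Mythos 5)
Your argument is correct and follows essentially the same route as the paper: use $|a_\mu|<1$ together with Theorem~\ref{T:I1} to invert $1+a_\mu\bA(f)$ in $\kL(L_2(\R))$, then obtain \eqref{RegO} by composing the analyticity of operator inversion, of $f\mapsto\bA(f)$ from \eqref{RegA}, and of the coefficient map $f\mapsto\mathcal{L}(f)\in\kL(H^3(\R),L_2(\R))$. The only difference is that you spell out the analyticity of the coefficient operator (via the algebra structure of $H^1(\R)$ and the embedding $h''\in H^1(\R)\hookrightarrow L_\infty(\R)$), a step the paper simply asserts.
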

\begin{proof} Since $|a_\mu|<1$, it follows from Theorem~\ref{T:I1} that 
\begin{align*}
 \oo(f)[h]:= (1+a_\mu\bA(f))^{-1}\Big[\frac{h'''}{(1+f'^2)^{3/2}}-3\frac{f'{f''}h''}{(1+f'^2)^{5/2}}-\Theta h'\Big]\in L_2(\R)
\end{align*}
is the unique solution to \eqref{P:2'}.
 Since
 \begin{align*}
  &[T\mapsto T^{-1}]\in C^\omega\big({\rm Isom\,}(L_2(\R)), {\rm Isom\,}(L_2(\R))\big),\\[1ex]
  &\left[f\mapsto \Big[h\mapsto\frac{h'''}{(1+f'^2)^{3/2}}-3\frac{f'f''h''}{(1+f'^2)^{5/2}}-\Theta h'\Big]\right]\in C^\omega(H^2(\R),\kL(H^3(\R),L_2(\R))),
 \end{align*}
  the desired regularity follows from \eqref{RegA}.
\end{proof}

For later purposes we decompose the solution operator found in Proposition~\ref{P1} as a sum of two operators. 
This decomposition   is very useful   because $\oo_2(f)[h] $ can be viewed as a lower order term, while the highest order term $(\oo_1(f)[h])'$ appears as a derivative, and  this  enables us to use integration by parts 
in the arguments that follow (see the proof of Theorem~\ref{TK1}).

\begin{prop}\label{P1d} Given $f\in H^2(\R) $  and $h\in H^3(\R)$, let  
\begin{align*}
 \oo_1(f)[h]&:=(1+a_\mu\bA(f))^{-1}\Big[\frac{h''}{(1+f'^2)^{3/2}}\Big],\\[1ex]
 \oo_2(f)[h]&:=(1+a_\mu\bA(f))^{-1}\big[-\Theta h'+a_\mu T_{0,{\rm lot}}(f)[\oo_1(f)[h]]\big],
\end{align*}
where the mapping $T_{0,{\rm lot}} $ is defined in \eqref{0lot}. 
Then:
\begin{itemize}
 \item[$(i)$] $\oo_1\in C^\omega(H^2(\R), \kL(H^3(\R), H^1(\R)))$ and $  \oo_2\in C^\omega(H^2(\R), \kL(H^3(\R), L_2(\R)));$
 \item[$(ii)$] \[\oo(f)=\frac{d}{dx}\circ\oo_1(f)+\oo_2(f); \]
  \item[$(iii)$] Given $\tau\in (1/2,1)$, there exists a constant $C$ such that 
\begin{align}
\|\oo_1(f)[h]\|_2\leq C\|h\|_{H^2}\qquad\text{and}\qquad  \|\oo_1(f)[h]\|_{H^{\tau}}+\|\oo_2(f)[h]\|_2\leq C\|h\|_{H^{2+\tau}} \label{p1} 
\end{align}
for all $h\in H^3(\R)$.
\end{itemize}

\end{prop}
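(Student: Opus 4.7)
My strategy begins from the observation that, since $|a_\mu|<1$, setting $\lambda:=-1/a_\mu$ gives $|\lambda|\geq 1$ and
\[
1+a_\mu\bA(f)=-a_\mu\bigl(\lambda-\bA(f)\bigr),
\]
so Theorem~\ref{T:I1} and Proposition~\ref{P:I2} yield $(1+a_\mu\bA(f))^{-1}\in\Isom(L_2(\R))\cap\Isom(H^1(\R))$, with operator norms on both spaces bounded uniformly for $\|f\|_{H^2}\leq M$ by virtue of \eqref{DI} and \eqref{FE}. Hence $\oo_1(f)[h]\in H^1(\R)$ and $\oo_2(f)[h]\in L_2(\R)$ are well defined. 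For assertion $(i)$, I would combine \eqref{RegA} with the analyticity of operator inversion on open sets of isomorphisms to conclude that $f\mapsto(1+a_\mu\bA(f))^{-1}$ is real-analytic from $H^2(\R)$ into both $\kL(L_2(\R))$ and $\kL(H^1(\R))$. The multiplication map $f\mapsto[h\mapsto h''/(1+f'^2)^{3/2}]$ is real-analytic from $H^2(\R)$ into $\kL(H^3(\R),H^1(\R))$, since $H^1(\R)$ is a Banach algebra in one dimension and $x\mapsto(1+x)^{-3/2}$ is real-analytic near zero. Composition yields the asserted analyticity of $\oo_1$. For $\oo_2$, the map $h\mapsto-\Theta h'$ has trivial (constant) dependence on $f$ as an element of $\kL(H^3(\R),L_2(\R))$; the term $a_\mu T_{0,\mathrm{lot}}(f)[\oo_1(f)[h]]$ depends analytically on $f$ via the analytic dependence of $T_{0,\mathrm{lot}}$ on $f$ (through \eqref{0lot} together with Lemmas~\ref{L:99a}--\ref{L:99d}) composed with the analyticity of $\oo_1$ into $H^1(\R)\hookrightarrow H^\tau(\R)$ for $\tau\in(1/2,1)$, with target $L_2(\R)$ by \eqref{0lote}.

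For $(ii)$, I apply the differentiation identity \eqref{derA} to the defining relation
\[
(1+a_\mu\bA(f))[\oo_1(f)[h]]=\frac{h''}{(1+f'^2)^{3/2}},
\]
which, combined with the consequence $((1+a_\mu\bA(f))[\omega])'=(1+a_\mu\bA(f))[\omega']+a_\mu T_{0,\mathrm{lot}}(f)[\omega]$ of \eqref{derA} and \eqref{0lot}, produces
\[
(1+a_\mu\bA(f))\bigl[(\oo_1(f)[h])'\bigr]=\frac{h'''}{(1+f'^2)^{3/2}}-\frac{3f'f''h''}{(1+f'^2)^{5/2}}-a_\mu T_{0,\mathrm{lot}}(f)[\oo_1(f)[h]].
\]
Applying $(1+a_\mu\bA(f))^{-1}$ to both sides and comparing with the formula for $\oo(f)[h]$ from Proposition~\ref{P1} yields $\oo(f)[h]=(\oo_1(f)[h])'+\oo_2(f)[h]$, which is the decomposition in $(ii)$.

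For $(iii)$, the bound $\|\oo_1(f)[h]\|_2\leq C\|h\|_{H^2}$ is immediate from the uniform $L_2$-bound on $(1+a_\mu\bA(f))^{-1}$ and the pointwise inequality $|h''/(1+f'^2)^{3/2}|\leq|h''|$. Interpolating the uniform bounds of $(1+a_\mu\bA(f))^{-1}$ between $\kL(L_2(\R))$ and $\kL(H^1(\R))$ gives a uniform bound on $\kL(H^\tau(\R))$, which combined with a Kato--Ponce type estimate $\|h''/(1+f'^2)^{3/2}\|_{H^\tau}\leq C\|h\|_{H^{2+\tau}}$ (valid since $\tau>1/2$ makes $H^\tau(\R)$ a Banach algebra and $(1+f'^2)^{-3/2}\in H^1(\R)\cap L_\infty(\R)$) yields the desired $H^\tau$-bound on $\oo_1$. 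Finally, the $L_2$-bound on $\oo_2$ follows from the $L_2$-bound of $(1+a_\mu\bA(f))^{-1}$, the trivial estimate $\|h'\|_2\leq\|h\|_{H^{2+\tau}}$, and \eqref{0lote} applied to the $H^\tau$-bound on $\oo_1(f)[h]$ just obtained.

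The main obstacle is keeping all constants uniform for $\|f\|_{H^2}\leq M$, in particular verifying that the interpolation argument for $(1+a_\mu\bA(f))^{-1}$ on $H^\tau(\R)$ with $\tau\in(1/2,1)$ preserves this uniformity; the fractional product estimate of Kato--Ponce type is standard but must be invoked carefully in this scale. All remaining steps reduce to bookkeeping with the Banach algebra structure of $H^1(\R)$ in one dimension and with the already-established mapping and regularity properties of $\bA$, of $T_{0,\mathrm{lot}}$, and of the operator inverse.
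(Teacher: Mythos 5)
Your proposal is correct and takes essentially the same route as the paper: invertibility of $1+a_\mu\bA(f)$ via Theorem~\ref{T:I1} and Proposition~\ref{P:I2}, analyticity from \eqref{RegA} and analyticity of operator inversion, the identity \eqref{DEE} obtained from \eqref{derA}/\eqref{DERR} for part $(ii)$, and the bounds \eqref{DI}, \eqref{FE}, \eqref{0lote} combined with complex interpolation for part $(iii)$ (you interpolate the resolvent between $\kL(L_2(\R))$ and $\kL(H^1(\R))$ and add a product estimate, while the paper interpolates the map $h\mapsto\oo_1(f)[h]$ directly, which is immaterial). The one imprecision is your justification of the analyticity of $\oo_2$ via analytic dependence of $T_{0,\rm lot}$ on $f$, which Lemmas~\ref{L:99a}--\ref{L:99d} only furnish as $C^{1-}$; the paper instead deduces $\oo_2\in C^\omega(H^2(\R),\kL(H^3(\R),L_2(\R)))$ from $(ii)$ together with Proposition~\ref{P1} and the analyticity of $\oo_1$, a route your own argument already provides.
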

\begin{proof}
That $\oo_1$ is well-defined and $\oo_1\in C^\omega(H^2(\R), \kL(H^3(\R), H^1(\R))) $ follows from  Proposition~\ref{P:I2}, \eqref{RegA}, and the property
\begin{align*}
  \left[f\mapsto \Big[h\mapsto\frac{h''}{(1+f'^2)^{3/2}}\Big]\right]\in C^\omega(H^2(\R),\kL(H^3(\R),H^1(\R))).
 \end{align*}
 Moreover,   in view of \eqref{DERR}, we get that 
\begin{align}
(1+a_\mu\bA(f))[(\oo_1(f)[h])']=\Big(\frac{h''}{(1+f'^2)^{3/2}}\Big)'-a_\mu T_{0,{\rm lot}}(f)[\oo_1(f)[h]],\label{DEE}
\end{align}
 and therewith  $\oo(f)[h]-(\oo_1(f)[h])'=\oo_2(f)[h] $ for all $h\in H^3(\R)$.
Invoking Proposition~\ref{P1}, we are left  to establish \eqref{p1}.
The estimates for $ \oo_1(f)[h] $ follow from  \eqref{DI} and \eqref{FE} via complex interpolation, cf. \eqref{CI}.  
Finally, the estimate for the term $\|\oo_2(f)[h]\|_2$ is a consequence of \eqref{DI}, \eqref{0lote}, and of the  estimate  for $\|\oo_1(f)[h]\|_{H^{\tau}}.$
\end{proof}

Appealing to Proposition~\ref{P1}, we  now   formulate the original  system \eqref{P}, after rescaling  the time appropriately, as a quasilinear evolution problem for $f$ only, that is
\begin{equation}\label{Pest1}
  \p_t f= \Phi_{\sigma}(f)[f],\quad t>0,\qquad f(0)=f_0,
\end{equation}
where $\Phi_{\sigma}:H^2(\R)\to \kL(H^3(\R), L_2(\R))$ is the mapping
\begin{align*}
 \Phi_{\sigma}(f)[h] :=\bB(f)[\ov\omega(f)[h]]
\end{align*}
for $f\in H^2(\R)$ and $h\in H^3(\R)$, and $\bB(f)$ is the linear operator  defined in \eqref{oper}.
The properties \eqref{regB} and \eqref{RegO} imply that
\begin{align}\label{reg2}
  \Phi_{\sigma}\in C^{\omega}  (H^2(\R),\kL(H^3(\R), L_2(\R))).
\end{align}
 
\medskip

\paragraph{\bf The generator property}
We now choose an arbitrary  function $f\in H^2(\R) $ which is is kept fixed in the following. 
Our next task is to prove that  $\Phi_{\sigma}(f)$, considered as an unbounded operator  in $L_2(\R)$ with definition domain $H^3(\R)$, is  the generator of a strongly continuous
and analytic semigroup in $\kL(L_2(\R))$, that is
\begin{align}\label{GP}
  -\Phi_{\sigma}(f)\in \kH  (H^3(\R), L_2(\R)),
\end{align}
see \cite{Am95, L95} for several characterizations of such type of operators.
To this end, we write  the operator $\Phi_{\sigma}(f)$ as a sum
\begin{align*} 
  \Phi_{\sigma}(f) =\Phi_{\sigma,1}(f)+\Phi_{\sigma,2}(f),  
\end{align*}
where
\begin{align*}
 \Phi_{\sigma,1}(f)[h] &:=B_{0,1}(f)[(\oo_1(f)[h])'] +f'B_{1,1}(f)[f,(\oo_1(f)[h])'], \\[1ex]
   \Phi_{\sigma,2}(f)[h] &:=\bB(f)[\oo_2(f)[h]]  
\end{align*}
for   $h\in H^3(\R)$, and where $\oo_i(f)$, $i=1,2,$ are the operators introduced in Proposition~\ref{P1d}.
For the particular choice $\tau=3/4$ in Proposition~\ref{P1d}, it follows that  $\Phi_{\sigma,2}(f)\in\kL(H^{11/4}(\R), L_2(\R)).$ This property together with $[L_2(\R), H^3(\R)]_{11/12}=H^{11/4}(\R),$ cf. \eqref{CI}, 
and \cite[Proposition~2.4.1]{L95} enables us to view $\Phi_{\sigma,2}(f)$ as a lower order perturbation.
Hence, our task reduces to establishing the generator property for the leading order term  $\Phi_{\sigma,1}(f).$
The proof of this property is  technical and  is based on an approach followed previously in \cite{E94,   ES95,  ES97} in the context of spaces of continuous functions, and refined recently in \cite{EMW15, M16x}.

In order to proceed we choose for each $\e\in(0,1)$ a so-called finite $\e$-localization family, that is  a family  
\[\{\pi_j^\e\,:\, -N+1\leq j\leq N\}\subset  C^\infty_0(\R,[0,1]),\]
with $N=N(\e)\in\N $ sufficiently large, such that
\begin{align}
\bullet\,\,\,\, \,\,  & \text{$ \supp \pi_j^\e $ is an interval of length less or equal $\e$ for all $|j|\leq N-1$;}
\label{i}\\[1ex]
\bullet\,\,\,\, \,\, &\text{$ \supp \pi_{N}^\e\subset(-\infty,-x_{N}]\cup [x_N,\infty)$  and $x_{N}\geq\e^{-1}$;}
\label{ii}\\[1ex]
\bullet\,\,\,\, \,\, &\text{ $\supp \pi_j^\e\cap\supp \pi_l^\e=\emptyset$ if $[|j-l|\geq2, |j|, |l|\in\{0,\dots, N-1\}]$ or $[|l|\leq N-2, j=N];$} \label{iii}\\[1ex]
\bullet\,\,\,\, \,\, &\text{ $\sum_{j=-N+1}^N(\pi_j^\e)^2=1$ and $\|(\pi_j^\e)^{(k)}\|_\infty\leq C\e^{-k}$ for all $ k\in\N, -N+1\leq j\leq N$.}\label{v} 
\end{align} 
 Such $\e$-localization families can be easily constructed.
Furthermore, we choose   a second family   \[\{\chi_j^\e\,:\, -N+1\leq j\leq N\}\subset  C^\infty_0(\R,[0,1]) \] with the following properties
\begin{align}
\bullet\,\,\,\, \,\,  &\text{$\chi_j^\e=1$ on $\supp \pi_j^\e$;}\label{c1}\\[1ex]
\bullet\,\,\,\, \,\,  &\text{$ \supp \chi_j^\e$ is an interval with $|\supp\chi_j^\e|\leq 3\e$ for $|j|\leq N-1$;}\label{c2}\\[1ex]
\bullet\,\,\,\, \,\,  &\text{$\supp\chi_N^\e\subset [|x|\geq x_N-\e]$.} \label{c3}
\end{align}

The following remark is a simple exercise.
\begin{rem}\label{R:3}
 Given  $k\in\N$ and a finite $\e$-localization family  $\{\pi_j^\e\,:\, -N+1\leq j\leq N\}$, the mapping
 \[
\Big[h\mapsto \sum_{j=-N+1}^ N\|\pi_j^\e h\|_{H^k}\Big]:H^k(\R)\to[0,\infty)
\]
defines a norm on $H^k(\R) $ which is equivalent to the standard $H^k$-norm. 
\end{rem}

Let us now introduce the continuous path
\[
[\tau\mapsto \Phi_{\sigma,1}(\tau f)]:[0,1]\to \kL(H^3(\R),L_2(\R))
\]
which connects  the operator $\Phi_{\sigma,1}(f)$ with   $\Phi_{\sigma,1}(0)$.
Recalling Proposition~\ref{P1d}, we have the following identity
\begin{align*}
 \Phi_{\sigma,1}(0)[h]=\pi H[(\oo_1(0)[h])']= \pi H[h''']=-\pi (\p_x^4)^{3/4}[h],
\end{align*}
where   $(\p_x^4)^{3/4}$ denotes the Fourier multiplier with symbol $[\xi\mapsto |\xi|^3].$
The following result   shows that  the operator $\Phi_{\sigma,1}(\tau f)$ can be approximated, in a sense to be made precise below,
by  Fourier multipliers $c(\p_x^4)^{3/4},$ where $c$ denotes negative constants.  

\begin{thm}\label{TK1} 
Let $f\in H^2(\R)$ and    $\mu>0$ be given. Then, there exist $\e\in(0,1)$, a finite $\e$-locali\-za\-tion family  $\{\pi_j^\e\,:\, -N+1\leq j\leq N\} $, a constant $K=K(\e)$, 
and for each  $ j\in\{-N+1,\ldots,N\}$ and $\tau\in[0,1]$ there 
exist bounded operators $$\bA_{ j,\tau}\in\kL(H^3(\R), L_2(\R))$$
 such that 
 \begin{equation}\label{DEK1}
  \|\pi_j^\e \Phi_{\sigma,1}(\tau f)[h]-\bA_{j,\tau}[\pi^\e_j h]\|_{2}\leq \mu \|\pi_j^\e h\|_{H^3}+K\|  h\|_{H^{11/4}}
 \end{equation}
 for all $ j\in\{-N+1,\ldots,N\}$, $\tau\in[0,1],$ and  $h\in H^3(\R)$. The operators $\bA_{j,\tau}$ are defined  by 
  \begin{align*} 
 \bA_{j,\tau }:=- \frac{   \pi}{(1+\tau^2 f'^2(x_j^\e) )^{3/2}}   (\p_x^4 )^{3/4}, \qquad |j|\leq N-1, 
 \end{align*}
 where $x_j^\e\in \supp  \pi_j^\e$, respectively
 \begin{align*} 
 \bA_{N,\tau }:= -  \pi (\p_x^4)^{3/4}. 
 \end{align*}
\end{thm}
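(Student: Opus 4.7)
\emph{Proof sketch.} My plan is to reduce the operator $\Phi_{\sigma,1}(\tau f)$, after localization by $\pi_j^\e$, to a constant-coefficient Fourier multiplier.  The key observation is a model computation: if $f_{\rm af}(x):=ax+b$ is affine, then $\delta_{[x,y]}f_{\rm af}/y=a$ identically, so $B_{0,1}(f_{\rm af})=\pi H/(1+a^2)$, $B_{1,1}(f_{\rm af})[f_{\rm af},\,\cdot\,]=a\pi H/(1+a^2)$, and \eqref{FOF} forces $\bA(f_{\rm af})\equiv0$.  Proposition~\ref{P1d} then yields $\oo_1(f_{\rm af})[h]=h''/(1+a^2)^{3/2}$, and using $H\circ\p_x^3=-(\p_x^4)^{3/4}$ we obtain
\[
\Phi_{\sigma,1}(f_{\rm af})[h]=-\frac{\pi}{(1+a^2)^{3/2}}(\p_x^4)^{3/4}[h].
\]
For the choice $a:=\tau f'(x_j^\e)$ this is exactly $\bA_{j,\tau}$; for $j=N$ the value $a=0$ reflects the decay $f,f'\to0$ at infinity that comes with $f\in H^2(\R)$.

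Given this, I would apply $\pi_j^\e$ to $\Phi_{\sigma,1}(\tau f)[h]$, commute $\pi_j^\e$ past the multilinear integral operators, and then replace the argument $\tau f$ in each of $B_{0,1}(\tau f)$, $B_{1,1}(\tau f)[\tau f,\,\cdot\,]$ and $\oo_1(\tau f)$ by the affine function $f^{(j)}_{\rm af}(x):=\tau(f(x_j^\e)+f'(x_j^\e)(x-x_j^\e))$.  The coefficient-freezing is done in two steps.  First, since $\chi_j^\e\equiv1$ on $\supp\pi_j^\e$, I insert $\chi_j^\e$ into each $\tau f$-slot; the resulting kernel pieces supported where $\chi_j^\e=0$ stay away from the singularity $y=0$ and so yield lower order contributions of size $K(\e)\|h\|_{H^{11/4}}$ via Lemma~\ref{L:99a}(iii).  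Second, on $\supp\chi_j^\e$ the embedding $f\in H^2(\R)\hookrightarrow C^{1,1/2}(\R)$ gives $\|\chi_j^\e(f'-f'(x_j^\e))\|_\infty\leq C\e^{1/2}$, so the affine approximation costs at most $C\e^{1/2}\|\pi_j^\e h\|_{H^3}\leq\mu\|\pi_j^\e h\|_{H^3}$ once $\e$ is small enough.  The commutators $[\pi_j^\e,B_{n,m}(\tau f)]$ produce kernels containing the factor $(\pi_j^\e(x)-\pi_j^\e(x-y))/y$, which is bounded by $\|(\pi_j^\e)'\|_\infty$; the $\ov B_{n,m}$-type bounds of Lemma~\ref{L:99d} show these commutators are absorbed in the lower order term as well.

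The genuine obstacle is that $\oo_1(\tau f)=(1+a_\mu\bA(\tau f))^{-1}[\,\cdot\,/(1+\tau^2f'^2)^{3/2}]$ is non-local, so $\pi_j^\e$ does not obviously commute with the resolvent.  My plan here is to invoke the resolvent identity
\[
(1+a_\mu\bA(\tau f))^{-1}\pi_j^\e-\pi_j^\e(1+a_\mu\bA(\tau f))^{-1}=-a_\mu(1+a_\mu\bA(\tau f))^{-1}[\bA(\tau f),\pi_j^\e](1+a_\mu\bA(\tau f))^{-1},
\]
combined with the resolvent bounds of Theorem~\ref{T:I1}, which are uniform along the path $\tau\in[0,1]$ by a standard compactness argument.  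The commutator $[\bA(\tau f),\pi_j^\e]$ again enjoys the cancellation $(\pi_j^\e(x)-\pi_j^\e(x-y))/y=O(1)$, and Lemma~\ref{L:99d} then provides a bound of the form $K(\e)\|h\|_{H^{11/4}}$ after conjugation by the two resolvents, which absorbs this contribution.  Once $\pi_j^\e$ has been moved through the resolvent the remaining scalar multiplier $1/(1+\tau^2f'^2)^{3/2}$ is frozen at $x_j^\e$ with the same $\e^{1/2}$-smallness as before, and the surviving constant-coefficient piece is exactly $\bA_{j,\tau}[\pi_j^\e h]$.  The case $j=N$ is completely analogous, with the frozen values at infinity equal to zero thanks to $x_N\geq\e^{-1}$ and $f,f'\in C_0(\R)$, producing the model operator $\bA_{N,\tau}=-\pi(\p_x^4)^{3/4}$.
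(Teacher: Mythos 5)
Your frozen-coefficient model computation is correct and is exactly the right heuristic: $\bA$ vanishes on affine functions, $\bB(\text{affine})=\pi H$, and so the affine model produces precisely $\bA_{j,\tau}$; your treatment of the outer operators (inserting $\chi_j^\e$, commutators with $\pi_j^\e$ handled by integration by parts, smallness of $\|\chi_j^\e(f'-f'(x_j^\e))\|_\infty$ from $H^2(\R)\hookrightarrow C^{1,1/2}$) also matches the paper's strategy. The genuine gap is in the step where you dispose of the non-local resolvent. The resolvent identity only moves $\pi_j^\e$ \emph{inside} $(1+a_\mu\bA(\tau f))^{-1}$; it does not remove the resolvent. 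Your conclusion that afterwards ``the surviving constant-coefficient piece is exactly $\bA_{j,\tau}[\pi_j^\e h]$'' silently asserts that the resolvent acts as the identity on the localized data up to admissible errors, i.e.\ that $a_\mu\bA(\tau f)$ applied to $(1+a_\mu\bA(\tau f))^{-1}[\pi_j^\e h''/(1+\tau^2f'^2)^{3/2}]$ contributes only $\mu\|\pi_j^\e h\|_{H^3}+K\|h\|_{H^{11/4}}$. But this resolvent output is \emph{not} supported near $\supp\pi_j^\e$, so the near-affine freezing argument (the only source of smallness you have) cannot be applied to it, and crude tail estimates for a Calder\'on-type kernel acting on a localized bump are $O(1)$, not small. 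In addition, the comparison has to be made at the level of $(\oo_1(\tau f)[h])'$, i.e.\ at third order in $h$, so the $L_2$-bound of Theorem~\ref{T:I1} that you invoke is not enough: one needs the $H^1$-level control of Proposition~\ref{P:I2}, and the commutator $[\bA(\tau f),\pi_j^\e]$ applied to \emph{derivatives} is lower order only after a further integration by parts that re-expresses it through $B_{n,m}$-operators acting on undifferentiated arguments (this is the identity \eqref{T1j}); Lemma~\ref{L:99d} as cited does not deliver the $H^1$-bound ``after conjugation by the two resolvents''.

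The paper closes exactly this gap by never inverting: it applies $1+a_\mu\bA(\tau f)$ to the genuinely localized function $\pi_j^\e(\oo_1(\tau f)[h])'$, computes the resulting identity \eqref{LL2e} using \eqref{DERR} and \eqref{T1j}, and deduces from the uniform bound \eqref{DI} the key estimate \eqref{LL2f}, $\|\pi_j^\e(\oo_1(\tau f)[h])'\|_2\leq C\|\pi_j^\e h\|_{H^3}+K\|h\|_{H^{11/4}}$ with $C$ independent of $\e$; the leftover term $a_\mu\bA(\tau f)[\pi_j^\e(\oo_1(\tau f)[h])']$ then has a compactly localized argument, so the vanishing of $\bA$ on affine functions can be exploited (the terms $T_{31},T_{32}$ with factors $f'-f'(x_j^\e)$ and $f'(x_j^\e)\,{\rm id}_\R-f$) to make it small plus lower order. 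Your plan can be repaired, but only by reorganizing it along these lines (work with the equation satisfied by the localized derivative, or show that the derivative of your commutator correction is lower order so that the resolvent output coincides with $\pi_j^\e(\oo_1(\tau f)[h])'$ up to admissible terms); as written, the decisive estimate is asserted rather than proved.
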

\begin{proof} 
We first pick   a finite $\e$-localization family  $\{\pi_j^\e\,:\, -N+1\leq j\leq N\} $  and an associated family  $\{\chi_j^\e\,:\, -N+1\leq j\leq N\} $, 
with $\e\in(0,1)$ to be  fixed later on in the proof.
It is convenient to write
\begin{align}\label{LL2}
 \Phi_{\sigma,1}(\tau f)[h] =\sum_{l=0}^1 f_{l,\tau}'B_{1,1}(\tau f)[f_{l,\tau},(\oo_1(\tau f)[h])'],\qquad h\in H^3(\R),
\end{align}
where, for $l\in\{0,1\}$, $f_{l,\tau}$ denotes the Lipschitz function
\[
f_{l,\tau}:=(1-l)\tau f+l{\, \rm id}_\R.
\] 
We further let 
\[
\bA^l_{j,\tau} :=- \pi \frac{ (f_{l,\tau}'(x_j^\e) )^2}{(1+\tau^2 f'^2(x_j^\e)  )^{5/2}}  (\p_x^4 )^{3/4}= \frac{ (f_{l,\tau}'(x_j^\e) )^2}{(1+\tau^2 f'^2(x_j^\e)  )^{5/2}}B_{0,1}(0)\circ\p_x^3\qquad\text{for $|j|\leq N-1$},
\]
respectively
\[
\bA^l_{N,\tau} :=- \pi  l^2   (\p_x^4 )^{3/4}=l^2B_{0,1}(0)\circ\p_x^3.
\]
In the following we denote by $C$ constants which are
independent of $\e$ (and, of course, of $h\in H^3(\R)$, $\tau\in [0,1]$, $l\in\{0,1\}$, and $j \in \{-N+1, \ldots, N\}$) and the constants  denoted by $K$ may depend only upon $\e.$
To establish \eqref{DEK1} we first consider  the case $|j|\leq N-1$.\medskip

\noindent{\em The case $|j|\leq N-1$.\,\,} 
Given $l\in\{0,1\}$ and $|j|\leq N-1$, we write
\begin{align}\label{LL2a}
 \pi_j^\e f_{l,\tau}'B_{1,1}(\tau f)[f_{l,\tau},(\oo_1(\tau f)[h])']-\bA^l_{j,\tau}[\pi^\e_j h]:=T_1[h]+T_2[h]+T_3[h],
\end{align}
where 
\begin{align*}
T_1[h]&:= \pi_j^\e f_{l,\tau}'B_{1,1}(\tau f)[f_{l,\tau},(\oo_1(\tau f)[h])'] - f_{l,\tau}'(x_j^\e)B_{1,1}(\tau f)  [f_{l,\tau},\pi^\e_j(\oo_1(\tau f)[h])' ], \\[1ex]
T_2[h]&:= f_{l,\tau}'(x_j^\e)B_{1,1}(\tau f)[f_{l,\tau},\pi^\e_j(\oo_1(\tau f)[h])']-\frac{(f_{l,\tau}'(x_j^\e))^2}{1+\tau^2 f'^2(x_j^\e)}B_{0,1}(0)  [\pi^\e_j(\oo_1(\tau f)[h])' ],\\[1ex]
T_3[h]&:= \frac{(f_{l,\tau}'(x_j^\e))^2}{1+\tau^2 f'^2(x_j^\e)}B_{0,1}(0)  [\pi^\e_j(\oo_1(\tau f)[h])' ]-\bA^l_{j,\tau}[\pi^\e_j h].
\end{align*}
To begin, we compute, by using   the fact that $\chi_j^\e=1$ on $\supp\pi_j^\e$, that
\begin{align}\label{LL2w}
 T_1[h]=&\chi_j^\e(f_{l,\tau}'-f_{l,\tau}'(x_j^\e))B_{1,1}(\tau f)  [f_{l,\tau},\pi^\e_j(\oo_1(\tau f)[h])' ]+T_{11}[h],
\end{align}
where, using integration by parts, we may reexpress $T_{11}[h]$ in the following way
\begin{align*}
 T_{11}[h]&=f_{l,\tau}'B_{1,1}(\tau f)  [f_{l,\tau},(\pi_j^\e)'\oo_1(\tau f)[h] ]+f_{l,\tau}'B_{1,1}(\tau f)  [\pi_j^\e,f_{l,\tau}'\oo_1(\tau f)[h]]\\[1ex]
 &\hspace{0.424cm}-2f_{l,\tau}'B_{2,1}( \tau f)  [\pi_j^\e,f_{l,\tau},\oo_1(\tau f)[h]]-2\tau^2 f_{l,\tau}'B_{3,2}(\tau f, \tau f)  [\pi_j^\e,f_{l,\tau},f,f' \oo_1(\tau f)[h]]\\[1ex]
 &\hspace{0.424cm}+2\tau^2 f_{l,\tau}'B_{4,2}(\tau f, \tau f)  [\pi_j^\e,f_{l,\tau},f,f, \oo_1(\tau f)[h]]\\[1ex]
 &\hspace{0.424cm}+(f_{l,\tau}'(x_j^\e)-f_{l,\tau}')(1-\chi_j^\e)B_{1,1}(\tau f)  [f_{l,\tau},(\pi_j^\e)'\oo_1(\tau f)[h]]\\[1ex]
 &\hspace{0.424cm}+(f_{l,\tau}'(x_j^\e)-f_{l,\tau}')B_{1,1}(\tau f)  [\chi_j^\e,\pi_j^\e f_{l,\tau}'\oo_1(\tau f)[h]]\\[1ex]
 &\hspace{0.424cm}-2(f_{l,\tau}'(x_j^\e)-f_{l,\tau}')B_{2,1}( \tau f)  [\chi_j^\e,f_{l,\tau},\pi_j^\e \oo_1(\tau f)[h]]\\[1ex]
 &\hspace{0.424cm}-2\tau^2 (f_{l,\tau}'(x_j^\e)-f_{l,\tau}')B_{3,2}(\tau f, \tau f)  [\chi_j^\e,f_{l,\tau},f,\pi_j^\e f' \oo_1(\tau f)[h]]\\[1ex]
 &\hspace{0.424cm}+2\tau^2 (f_{l,\tau}'(x_j^\e)-f_{l,\tau}')B_{4,2}(\tau f, \tau f)  [\chi_j^\e,f_{l,\tau},f,f,\pi_j^\e \oo_1(\tau f)[h]].
\end{align*}
 Combining Lemma~\ref{L:99a}~$(i)$ and Proposition~\ref{P1d}~$(iii)$, we see that
 \begin{align*}
  \|T_{11}[h]\|_{2}\leq   K\|h\|_{H^2},
\end{align*}
and  therewith
 \begin{align}
  \|T_{1}[h]\|_{2}\leq   C\|\chi_j^\e(f_{l,\tau}'-f_{l,\tau}'(x_j^\e))  \|_\infty\|\pi^\e_j(\oo_1(\tau f)[h])' \|_2+ K\|h\|_{H^2}.\label{LL2c}
\end{align}
We next estimate the term $\|\pi^\e_j(\oo_1(\tau f)[h])' \|_2$.
In view of \eqref{DEE} and using integration by parts, we have
\begin{align}
 (1+a_\mu \bA(\tau f))[\pi_j^\e(\oo_1(\tau f)[h])']&=\frac{ \pi_j^\e h'''}{(1+\tau^2f'^2)^{3/2}}-\frac{3\tau^2\pi_j^\e f'f'' h''}{(1+\tau^2f'^2)^{5/2}}-a_\mu \pi_j^\e T_{0,{\rm lot}}(\tau f)[\oo_1(\tau f)[h]]\nonumber\\[1ex]
 &\hspace{0.424cm}-\frac{ a_\mu }{\pi}T_{1j,{\rm lot}}(\tau f)[\oo_1(\tau f)[h]],\label{LL2e}
\end{align}
where, given $f\in H^2(\R), $   $\oo\in H^1(\R)$, and $j\in\{-N+1,\ldots, N\}$, we set 
\begin{align}
 T_{1j,{\rm lot}}(f)[\oo]&:=\pi\big(\pi_j^\e\bA(f)[\oo']-\bA(f)[\pi_j^\e\oo']\big)\nonumber\\[1ex]
 &\phantom{:}=f'\Big(B_{0,1}(f)[(\pi_j^\e)'\oo]-B_{1,1}( f)[\pi_j^\e, \oo ] -2 B_{2,2}(f, f)[\pi_j^\e,f,f'\oo] +2B_{3,2}(f ,f)[\pi_j^\e,f,f,\oo]\Big)\nonumber\\[1ex]
&\hspace{0.424cm}-B_{1,1}(f)[\pi_j^\e,f'\oo]+2B_{2,1}(f)[\pi_j^\e,f, \oo ] -B_{1,1}(f)[f,(\pi_j^\e)'\oo] \nonumber\\[1ex]
&\hspace{0.424cm}+2 B_{3,2}(f, f)\big)[\pi_j^\e,f,f,f'\oo]-2B_{4,2}(f, f)\big)[\pi_j^\e,f,f,f,\oo],\label{T1j}
\end{align}
the last identity following by using integration by parts.
In view of \eqref{LL2e}, it follows from \eqref{DI},  \eqref{0lote} and \eqref{p1} (with $\tau=3/4$) that 
\begin{align}
  \|\pi_j^\e(\oo_1(\tau f)[h])'\|_{2}\leq  C\|\pi^\e_j h \|_{H^3}+ K\|h\|_{H^{11/4}}.\label{LL2f}
\end{align}
The estimate \eqref{LL2f} is clearly valid also for $j=N$.
Choosing $\e$ sufficiently small, it follows from \eqref{LL2c} and  \eqref{LL2f}  that 
\begin{align}
  \|T_{1}[h]\|_{2}\leq  \frac{\mu}{3}\|\pi^\e_j h \|_{H^3}+ K\|h\|_{H^{11/4}},\label{LL2aa}
\end{align}
provided that $\e$ is sufficiently small.  

We now consider the term $T_2[h]$ which is decomposed as follows 
\begin{align*}
 T_2[h]=&f_{l,\tau}'(x_j^\e)\big(B_{1,1}(\tau f)[f_{l,\tau},\pi^\e_j(\oo_1(\tau f)[h])']-B_{1,1}(\tau f'(x_j^\e){\rm id}_\R)[f_{l,\tau}(x_j^\e){\rm id}_\R,\pi^\e_j(\oo_1(\tau f)[h])']\big)\\[1ex]
 =& f_{l,\tau}'(x_j^\e)  T_{21}[h]-\frac{\tau^2f_{l,\tau}'^2(x_j^\e)}{1+\tau^2f'^2(x_j^\e)} T_{22}[h],
\end{align*}
 with
\begin{align*}
 T_{21}[h]&:=B_{1,1}(\tau f)[f_{l,\tau}-f_{l,\tau}'(x_j^\e){\rm id}_\R ,\pi_j^\e(\oo_1(\tau f)[h])'],\\[1ex]
  T_{22}[h]&:=B_{2,1}(\tau f)[f-f'(x_j^\e){\rm id}_\R,f+f'(x_j^\e){\rm id}_\R,\pi_j^\e(\oo_1(\tau f)[h])'].
\end{align*}
We  first estimate $T_{21}[h]$. Integrating by parts, we get
\begin{align}
 \|T_{21}[h]\|_2&\leq\|\chi_j^\e B_{1,1}(\tau f)[f_{l,\tau}-f_{l,\tau}'(x_j^\e){\rm id}_\R ,\pi_j^\e(\oo_1(\tau f)[h])']\|_2\nonumber\\[1ex]
 &\hspace{0.424cm}+\Big\|\PV\int_\R\frac{[\delta_{[\cdot,y]} (f_{l,\tau}-f_{l,\tau}'(x_j^\e){\rm id}_\R)/y][\delta_{[\cdot, y]}\chi_j^\e/y]}{1+\tau^2\big( \delta_{[\cdot,y]} f/y\big)^2}(\pi^\e_j(\oo_1(\tau f)[h])')(\cdot-y)\,dy\Big\|_2\nonumber\\[1ex]
&\leq \|\chi_j^\e B_{1,1}(\tau f)[f_{l,\tau}-f_{l,\tau}'(x_j^\e){\rm id}_\R ,\pi_j^\e(\oo_1(\tau f)[h])']\|_2+ K\|h\|_{H^2}.\label{LL3a}
\end{align}
Furthermore, letting $F_{l,\tau,j}\in C(\R)$ denote the Lipschitz function satisfying $F_{l,\tau,j}=f_{l,\tau}$ on $\supp \chi_j^\e$ and $F_{l,\tau,j}'=f_{l,\tau}'(x_j^\e)$ on $\R\setminus \supp \chi_j^\e,$
we have
\begin{align*}
 \chi_j^\e B_{1,1}(\tau f)[f_{l,\tau}-f_{l,\tau}'(x_j^\e){\rm id}_\R ,\pi_j^\e(\oo_1(\tau f)[h])']=\chi_j^\e B_{1,1}(\tau f)[F_{l,\tau,j}-f_{l,\tau}'(x_j^\e){\rm id}_\R ,\pi_j^\e(\oo_1(\tau f)[h])'],
\end{align*}
and   \eqref{LL2f} combined with  \eqref{LL3a}  leads us to
\begin{align*}
 \|T_{21}[h]\|&\leq C\|f_{l,\tau}'-f_{l,\tau}'(x_j^\e)\|_{L_\infty(\supp \chi_j^\e)}\|\pi_j^\e(\oo_1(\tau f)[h])'\|_2+ K\|h\|_{H^2}\\[1ex]
 &\leq\frac{\mu}{6}\|\pi^\e_j h \|_{H^3}+ K\|h\|_{H^{11/4}},
\end{align*}
provided that $\e$ is sufficiently small.
Since $T_{22}[h]$ can be estimated in a similar way, we  conclude that
\begin{align}
  \|T_{2}[h]\|_{2}\leq   \frac{\mu}{3}\|\pi^\e_j h \|_{H^3}+ K\|h\|_{H^{11/4}}\label{LL2aaa}
\end{align}
if $\e$ is chosen sufficiently small.

We now turn to  $T_3[h]$ and  note that 
\begin{align*}
 T_3[h] = \frac{(f_{l,\tau}'(x_j^\e))^2}{1+\tau^2 f'^2(x_j^\e)} B_{0,1}(0)  \Big[\pi^\e_j(\oo_1(\tau f)[h])' -\frac{1}{(1+f'^2(x_j^\e))^{3/2}}(\pi^\e_j h)'''\Big],   
\end{align*}
and therefore
\begin{align}\label{L4a}
 \|T_3[h]\|_2\leq C\Big\|\pi^\e_j(\oo_1(\tau f)[h])' -\frac{1}{(1+f'^2(x_j^\e))^{3/2}}\pi^\e_j h'''\Big\|_2+K\|h\|_{H^2}.
\end{align}
Recalling \eqref{LL2e}, we have
\begin{align*}
\pi_j^\e(\oo_1(\tau f)[h])'-\frac{\pi^\e_j h'''}{((1+f'^2(x_j^\e)))^{3/2}}&= \Big[\frac{1}{(1+\tau^2f'^2)^{3/2}}-\frac{1}{(1+f'^2(x_j^\e))^{3/2}}\Big]\pi^\e_j h'''\\[1ex]
&\hspace{0.424cm}-\frac{3\tau^2\pi_j^\e f'f'' h''}{(1+\tau^2f'^2)^{5/2}}-a_\mu \pi_j^\e T_{0,{\rm lot}}(\tau f)[\oo_1(\tau f)[h]]\\[1ex]
 &\hspace{0.424cm}-\frac{ a_\mu }{\pi}T_{1j,{\rm lot}}(\tau f)[\oo_1(\tau f)[h]]-a_\mu \bA(\tau f)[\pi_j^\e(\oo_1(\tau f)[h])'],
\end{align*}
and \eqref{L4a} combined with \eqref{0lote} and \eqref{p1} (with $\tau=3/4$)  yields 
\begin{align}\label{L4b}
 \|T_3[h]\|_2\leq \|\bA(\tau f)[\pi_j^\e(\oo_1(\tau f)[h])']\|_2+\frac{\mu}{6}\|\pi^\e_j h \|_{H^3}+ K\|h\|_{H^{11/4}},
\end{align}
provided that $\e$ is sufficiently small.
We are left with the term
\begin{align*} 
 \bA(\tau f)[\pi_j^\e(\oo_1(\tau f)[h])']&=\tau\big(f'B_{0,1}(\tau f)[\pi_j^\e(\oo_1(\tau f)[h])'] -B_{1,1}(\tau f)[f,\pi_j^\e(\oo_1(\tau f)[h])']\big)\nonumber\\[1ex]
 &=\tau(T_{31}[h]+T_{32}[h]),
\end{align*}
where
\begin{align*}
 T_{31}[h]&:=(f'-f'(x_j^\e))B_{0,1}(\tau f)[\pi_j^\e(\oo_1(\tau f)[h])'],\\[1ex]
 T_{32}[h]&:=B_{1,1}(\tau f)[f'(x_j^\e){\rm id}_\R-f,\pi_j^\e(\oo_1(\tau f)[h])'].
\end{align*}
The term $T_{32}[h]$ may be estimated in a  similar way as the term $T_{21}[h]$ above, while integrating by parts, we obtain, similarly as in the study of $T_1[h]$, the following estimate 
\begin{align*}
 \|T_{31}[h]\|_2&\leq\|\chi_j^\e(f'-f'(x_j^\e))B_{0,1}(\tau f)[\pi_j^\e(\oo_1(\tau f)[h])']\|_2\\[1ex]
 &\hspace{0.424cm}+\Big\|(f'-f'(x_j^\e))\PV\int_\R\frac{\delta_{[\cdot,y]}\chi_j^\e/y}{1+\tau^2\big( \delta_{[\cdot,y]} f/y\big)^2} (\pi^\e_j(\oo_1(\tau f)[h])')(\cdot-y) \,dy\Big\|_2\\[1ex]
 &\leq\|\chi_j^\e(f'-f'(x_j^\e))\|_\infty\|\pi_j^\e(\oo_1(\tau f)[h])']\|_2+K\|h\|_{H^2}.
\end{align*}
For   sufficiently small $\e$, \eqref{LL2f} leads us to 
\begin{align*} 
 \|T_{31}[h]\|_2+\|T_{32}[h]\|_2\leq \frac{\mu}{6}\|\pi^\e_j h \|_{H^3}+ K\|h\|_{H^{11/4}}
\end{align*}
and, together with \eqref{L4b}, we conclude  
\begin{align}\label{LL2aaaa}
 \|T_3[h]\|_2\leq  \frac{\mu}{3}\|\pi^\e_j h \|_{H^3}+ K\|h\|_{H^{11/4}}.
\end{align}
The desired estimate \eqref{DEK1} follows for $|j|\leq N-1$ from \eqref{LL2}, \eqref{LL2a}, \eqref{LL2aa}, \eqref{LL2aaa}, and  \eqref{LL2aaaa}.\medskip

\noindent{\em The case $j=N$.\,\,}    Similarly as in the previous case, we write
\begin{align}\label{LR0}
 \pi_N^\e f_{l,\tau}'B_{1,1}(f_{l,\tau},\tau f)[(\oo_1(\tau f)[h])']-\bA^l_{N,\tau}[\pi^\e_N h]:=S_1[h]+S_2[h]+S_3[h],
\end{align}
with
\begin{align*}
S_1[h]&:= \pi_N^\e f_{l,\tau}'B_{1,1}(\tau f)[f_{l,\tau},(\oo_1(\tau f)[h])'] - lB_{1,1}(\tau f)  [f_{l,\tau},\pi^\e_N(\oo_1(\tau f)[h])' ], \\[1ex]
S_2[h]&:= l B_{1,1}(\tau f)  [f_{l,\tau},\pi^\e_N(\oo_1(\tau f)[h])' ]-l^2B_{0,1}(0)  [\pi^\e_N(\oo_1(\tau f)[h])' ],\\[1ex]
S_3[h]&:= l^2B_{0,1}(0)  [\pi^\e_N(\oo_1(\tau f)[h])' ]-\bA^l_{N,\tau}[\pi^\e_N h].
\end{align*}
The estimates derived when studying $T_1[h]$ together with the fact that $f'$ vanishes at infinity imply that
\begin{align}
  \|S_{1}[h]\|_{2}\leq  \frac{\mu}{3}\|\pi^\e_N h \|_{H^3}+ K\|h\|_{H^2}\label{LR1}
\end{align}
for sufficiently small $\e$. 

If $l=0$, then $S_i=0,$ $i=2,3,$ and   we are left to consider the case $l=1$, when $f_{l,\tau}={\rm id}_\R $.
The relation \eqref{rell} implies that
\begin{align*}
 S_2[h]&= \big(B_{0,1}(\tau f) -B_{0,1}(0)\big)  [\pi^\e_N(\oo_1(\tau f)[h])' ]=-\tau^2B_{2,1}(\tau f) [f,f,\pi^\e_N(\oo_1(\tau f)[h])' ]\\[1ex]
 &=\tau^2\big(S_{21}[h]-S_{22}[h]\big),
\end{align*}
where
\begin{align*}
 S_{21}[h]&:= \tau^2\PV\int_\R\frac{\big(\delta_{[\cdot,y]} f/y\big)^2\big(\delta_{[\cdot,y]}\chi_N^\e/y\big)}{1+\tau^2\big( \delta_{[\cdot,y]} f/y\big)^2}[\pi^\e_N(\oo_1(\tau f)[h])' ](\cdot-y)\, dy,\\[1ex]
 S_{22}[h]&:= \chi_N^\e B_{2,1}( \tau f) [f,f, \pi^\e_N(\oo_1(\tau f)[h])' ].
\end{align*}
Integrating by parts we get
\begin{align*}
  \|S_{21}[h]\|_{2}\leq   K\|h\|_{H^2}.
\end{align*}
To deal with $S_{22}[h]$, we let $F_N\in W^1_\infty(\R)$ denote  the function defined by
\[
F_N(x):=\left\{
\begin{array}{lll}
f(x)&,& |x|\geq x_N-\e, \\[1ex]
\cfrac{x+x_N-\e}{2(x_N-\e)}f(x_N-\e)+\cfrac{x_N-\e-x}{2(x_N-\e)}f(-x_N+\e)&,& |x|\leq x_N-\e.
\end{array}
\right.
\]
Since $f, f'\in C_0(\R)$, the relation \eqref{ii} implies that $\|F_N'\|_\infty\to0 $ for $\e\to0$.
Moreover, recalling \eqref{LL2f}, we find for $\e$   sufficiently small that
\begin{align*}
 \|S_{22}[h]\|_2&=\| \chi_N^\e B_{2,1}(\tau f) [F_N,F_N,\pi^\e_N(\oo_1(\tau f)[h])' ]\|_2\leq C\|F_N'\|_\infty^2\|\pi^\e_N(\oo_1(\tau f)[h])' \|_2\\[1ex]
 &\leq\frac{\mu}{3}\|\pi^\e_N h \|_{H^3}+ K\|h\|_{H^{11/4}},
\end{align*}
and therewith
\begin{align}
  \|S_{2}[h]\|_{2}\leq  \frac{\mu}{3}\|\pi^\e_N h \|_{H^3}+ K\|h\|_{H^{11/4}}.\label{LR2}
\end{align}

Finally, it holds that
\begin{align}\label{L5a}
 \|S_3[h]\|_2&=\|B_{0,1}(0)[\pi^\e_N(\oo_1(\tau f)[h])' - (\pi^\e_N h)''']\|_2\nonumber\\[1ex]
 &\leq C\|\pi^\e_N(\oo_1(\tau f)[h])' - \pi^\e_N h'''\|_2+K\|h\|_{H^2},
\end{align}
 and, since $f$ vanishes at infinity,  the  arguments used to derive \eqref{L4b} show that
 \begin{align}\label{L5b}
\|\pi_N^\e(\oo_1(\tau f)[h])'-\pi^\e_N h'''\|_2\leq\|\bA(\tau f)[\pi_N^\e(\oo_1(\tau f)[h])']\|_2+\frac{\mu}{6}\|\pi^\e_N h \|_{H^3}+ K\|h\|_{H^{11/4}},
\end{align}
provided that $\e$ is sufficiently small.
Furthermore, the first term on the right-hand side of \eqref{L5b} is decomposed as follows
\begin{align*}
 \bA(\tau f)[\pi_N^\e(\oo_1(\tau f)[h])']&=\tau\big(f'B_{0,1}(\tau f)[\pi_N^\e(\oo_1(\tau f)[h])'] -B_{1,1}(\tau f)[f,\pi_N^\e(\oo_1(\tau f)[h])']\big)\\[1ex]
 &=\tau\chi_N^\e\big( f'B_{0,1}(\tau f)[\pi_N^\e(\oo_1(\tau f)[h])'] -B_{1,1}(\tau f)[F_N,\pi_N^\e(\oo_1(\tau f)[h])']\big)\\[1ex]
 &\hspace{0.424cm}-\tau f'\PV\int_\R\frac{\delta_{[\cdot,y]}\chi_N^\e/y}{1+\tau^2\big( \delta_{[\cdot,y]} f/y\big)^2} (\pi^\e_N(\oo_1(\tau f)[h])')(\cdot-y) \,dy\\[1ex]
 &\hspace{0.424cm}+\tau \PV\int_\R\frac{\big(\delta_{[\cdot,y]}f/y\big)\big(\delta_{[\cdot,y]}\chi_N^\e/y\big)}{1+\tau^2\big( \delta_{[\cdot,y]} f/y\big)^2} (\pi^\e_N(\oo_1(\tau f)[h])')(\cdot-y) \,dy,
\end{align*}
where $F_N$  is the function introduced when considering $S_{22}[h]$.
Integrating by parts the integral terms, we infer from Lemma~\ref{L:99a}~$(i)$, \eqref{p1}, \eqref{LL2f}, and  the fact that $f'$ vanishes at infinity that
\begin{align*} 
 \|\bA(\tau f)[\pi_N^\e(\oo_1(\tau f)[h])']\|_2 \leq \frac{\mu}{6}\|\pi^\e_j h \|_{H^3}+ K\|h\|_{H^{11/4}}
\end{align*}
for $\e$ sufficiently small. The latter estimate together with \eqref{L5a} and \eqref{L5b} implies that 
\begin{align}\label{LR3}
 \|S_3[h]\|_2\leq  \frac{\mu}{3}\|\pi^\e_j h \|_{H^3}+ K\|h\|_{H^{11/4}}
\end{align}
if $\e$ is sufficiently small. 
Summarizing, for $j=N,$ the desired estimate \eqref{DEK1} follows from \eqref{LR0}, \eqref{LR1}, \eqref{LR2}, \eqref{LR3}.
This completes the proof.
\end{proof}

The Fourier multipliers $\bA_{j,\tau}$ found in Theorem~\ref{TK1} are elements of the  family of unbounded operators $\{\bA_{ x_0,\tau}\,:\, \tau\in[0,1], x_0\in\R\}$, where 
\[
\bA_{x_0,\tau}:=-\frac{\pi}{(1+\tau^2 f'^2(x_0))^{3/2}}(\p_x^4)^{3/4}.
\]
Each operator $\bA_{x_0,\tau}$ is the  generator of a strongly continuous analytic semigroup in $\kL(L_2(\R)).$
Moreover,  it is not difficult to prove (see for example  the proof  of \cite[Proposition~6.3]{M16x}) that there exists a constant  $\kappa_0\geq1$  such that  
  \begin{align}\label{13K}
&\lambda-\bA_{x_0,\tau}\in{\rm Isom}(H^3(\R), L_2(\R)),\\[1ex]
\label{14K}
& \kappa_0\|(\lambda-\bA_{x_0,\tau})[h]\|_{2}\geq  |\lambda|\cdot\|h\|_{2}+\|h\|_{H^3}
\end{align}
for all $x_0\in\R$, $\tau\in[0,1],$   $\lambda\in\C$ with $\re \lambda\geq 1$, and $h\in H^3(\R)$.
Combining these properties with Theorem~\ref{TK1}, we obtain the following generation result.
\begin{thm}\label{TK2}
Given $f\in H^2(\R)$, it holds that 
 \begin{align*} 
  -\Phi_{\sigma}(f)\in \kH(H^3(\R), L_2(\R)).
 \end{align*}
\end{thm}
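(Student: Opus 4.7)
The plan is to reduce to the leading order operator $\Phi_{\sigma,1}(f)$ and then use the localization theorem together with the uniform resolvent estimates for the frozen-coefficient Fourier multipliers. First I would recall the decomposition $\Phi_\sigma(f)=\Phi_{\sigma,1}(f)+\Phi_{\sigma,2}(f)$ with $\Phi_{\sigma,2}(f)\in\kL(H^{11/4}(\R),L_2(\R))$, as noted in the discussion preceding Theorem~\ref{TK1}. Since $H^{11/4}(\R)=[L_2(\R),H^3(\R)]_{11/12}$, this $\Phi_{\sigma,2}(f)$ is a lower-order perturbation in the sense of interpolation, and by the standard perturbation result \cite[Proposition I.1.4.2 or similar]{Am95} it does not affect the generator property. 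Hence it suffices to show
\[
-\Phi_{\sigma,1}(f)\in\kH(H^3(\R),L_2(\R)).
\]

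To achieve this, I would work along the homotopy $\tau\mapsto\Phi_{\sigma,1}(\tau f)$, $\tau\in[0,1]$, and prove that there exist constants $\omega\geq1$ and $\kappa\geq1$ such that, uniformly in $\tau\in[0,1]$,
\begin{equation}\label{unifres}
\kappa\|(\lambda+\Phi_{\sigma,1}(\tau f))[h]\|_{2}\geq |\lambda|\cdot\|h\|_{2}+\|h\|_{H^3}\qquad\text{for all $h\in H^3(\R)$, $\re\lambda\geq\omega$.}
\end{equation}
The idea is to pick $\mu:=1/(2\kappa_0)$ in Theorem~\ref{TK1}, with $\kappa_0$ from \eqref{14K}, and the corresponding $\e$-localization family $\{\pi_j^\e\}$. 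For each $j$ and $\tau$, multiplying $(\lambda+\Phi_{\sigma,1}(\tau f))[h]$ by $\pi_j^\e$ and invoking \eqref{DEK1} yields
\[
\|(\lambda-\bA_{j,\tau})[\pi_j^\e h]\|_2\leq \|\pi_j^\e(\lambda+\Phi_{\sigma,1}(\tau f))[h]\|_2+\mu\|\pi_j^\e h\|_{H^3}+K\|h\|_{H^{11/4}}.
\]
Applying \eqref{14K} on the left, the $\mu\|\pi_j^\e h\|_{H^3}$ term is absorbed, and after summing over $j$ and using Remark~\ref{R:3} to recover the full $H^3$-norm on the left, I obtain
\[
|\lambda|\cdot\|h\|_2+\|h\|_{H^3}\leq C\|(\lambda+\Phi_{\sigma,1}(\tau f))[h]\|_2+K'\|h\|_{H^{11/4}},
\]
with $C,K'$ independent of $\tau$ and $\lambda$. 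The last term is then swallowed via interpolation and by choosing $\re\lambda\geq\omega$ large enough, giving \eqref{unifres}.

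With \eqref{unifres} in hand, the a priori estimate shows that $\lambda+\Phi_{\sigma,1}(\tau f)$ is injective with closed range for each $\tau\in[0,1]$ and $\re\lambda\geq\omega$. At $\tau=0$ one has $\Phi_{\sigma,1}(0)=\pi(\p_x^4)^{3/4}$, which by \eqref{13K} is the negative generator of an analytic semigroup on $L_2(\R)$ with domain $H^3(\R)$, so $\lambda+\Phi_{\sigma,1}(0)$ is an isomorphism for such $\lambda$. The continuous dependence of $\Phi_{\sigma,1}(\tau f)$ on $\tau$ together with the uniform estimate \eqref{unifres} permits invoking the method of continuity (\cite[Proposition I.1.1.1]{Am95}) to conclude that $\lambda+\Phi_{\sigma,1}(f)\in\mathrm{Isom}(H^3(\R),L_2(\R))$ for all $\lambda$ with $\re\lambda\geq\omega$, with resolvent bound $\|(\lambda+\Phi_{\sigma,1}(f))^{-1}\|_{\kL(L_2)}\leq C/|\lambda|$. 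By the standard characterization of generators of analytic semigroups this gives $-\Phi_{\sigma,1}(f)\in\kH(H^3(\R),L_2(\R))$, and adding back $-\Phi_{\sigma,2}(f)$ as a lower-order perturbation finishes the proof.

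The main obstacle is the localization-and-absorption step leading to \eqref{unifres}: the error $K\|h\|_{H^{11/4}}$ in \eqref{DEK1} depends on $\e$, so care is needed to first fix $\e$ via the choice of $\mu$, then take $\re\lambda$ large enough that $K'\|h\|_{H^{11/4}}$ is dominated by $|\lambda|\cdot\|h\|_2+\|h\|_{H^3}$ through interpolation. The rest is a routine application of continuity plus perturbation.
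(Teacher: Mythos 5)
Your proposal follows the paper's proof step for step: split off $\Phi_{\sigma,2}(f)$ as a lower order perturbation via $H^{11/4}(\R)=[L_2(\R),H^3(\R)]_{11/12}$, derive a resolvent estimate along the path $\tau\mapsto\Phi_{\sigma,1}(\tau f)$, uniform in $\tau$, from Theorem~\ref{TK1}, \eqref{14K}, and Remark~\ref{R:3} (absorbing the $K\|h\|_{H^{11/4}}$ term by interpolation and a large shift), and conclude by the method of continuity starting from the Fourier multiplier at $\tau=0$. The substance is right, but your signs are inconsistent and, as literally written, the localization step does not go through. Since $\Phi_{\sigma,1}(0)[h]=\pi H[h''']=-\pi(\p_x^4)^{3/4}[h]$ (not $+\pi(\p_x^4)^{3/4}[h]$), the claim $-\Phi_{\sigma}(f)\in\kH(H^3(\R),L_2(\R))$ requires estimates for $\lambda-\Phi_{\sigma,1}(\tau f)$ on the right half plane $\re\lambda\geq\omega$, exactly as in \eqref{14K*}. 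With your choice $\lambda+\Phi_{\sigma,1}(\tau f)$, the estimate \eqref{DEK1} controls $\|(\lambda+\bA_{j,\tau})[\pi_j^\e h]\|_2$ in terms of $\|\pi_j^\e(\lambda+\Phi_{\sigma,1}(\tau f))[h]\|_2$, not $\|(\lambda-\bA_{j,\tau})[\pi_j^\e h]\|_2$ as in your display, and $\lambda+\bA_{j,\tau}$ has symbol $\lambda-c|\xi|^3$ with $c>0$, which degenerates for real $\lambda>0$, so \eqref{14K} is not applicable to it; the same sign slip reappears when you identify the limiting multiplier at $\tau=0$ and in the final inference to $-\Phi_{\sigma,1}(f)\in\kH(H^3(\R),L_2(\R))$. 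Once you replace $\lambda+\Phi_{\sigma,1}(\tau f)$ by $\lambda-\Phi_{\sigma,1}(\tau f)$ throughout and use that $\omega-\Phi_{\sigma,1}(0)=\omega+\pi(\p_x^4)^{3/4}\in{\rm Isom}(H^3(\R),L_2(\R))$ by \eqref{13K}, your argument coincides with the paper's.
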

\begin{proof}
 The proof is similar to that of \cite[Theorem~6.3]{M16x}, so that we only present the main steps. 
 Using the inequality   \eqref{14K}, Remark \ref{R:3}, and   Theorem~\ref{TK1},  we may find constants $\omega>1$ and $\kappa\geq1$ such that 
 \begin{align}\label{14K*}
\kappa\|(\lambda-\Phi_{\sigma,1}(\tau f))[h]\|_{2}\geq  |\lambda|\cdot\|h\|_{2}+\|h\|_{H^3}
\end{align}
for all $\tau\in[0,1]$, $\lambda\in\C$ with $\re \lambda\geq \omega$, and $h\in H^3(\R)$.
Additionally, since $\omega-\Phi_{\sigma,1}(0)=\omega+\pi(\p_x^4)^{3/4}$, the relation \eqref{13K} shows that $\omega-\Phi_{\sigma,1}(0)\in{\rm Isom}(H^3(\R), L_2(\R)),$ and \eqref{14K*} together with  the method of continuity yields 
 \begin{align}\label{13K*}
\omega-\Phi_{\sigma,1}(f)\in{\rm Isom}(H^3(\R), L_2(\R)).
\end{align}
Since $\Phi_{\sigma,2}(f)$ is a lower order perturbation, the claim follows from \eqref{14K*} and \eqref{13K*}. 
\end{proof}

We now come to the proof of our  first main result, which is mainly based on the abstract theory for  quasilinear parabolic problems   due to H. Amann, cf.  \cite[Section 12]{Am93}, and a recent  idea from  the proof of \cite[Theorem~1.3]{M16x}
where a parameter  trick, also used in   \cite{An90, ES96, PSS15},  is employed in a different manner.  

\begin{proof}[Proof of Theorem~\ref{MT1}]
 In virtue of \eqref{reg2} and of Theorem~\ref{TK2}, we have 
 \[
-\Phi_{\sigma}\in C^{\omega}(H^{2}(\R),\kH(H^3(\R), L_2(\R))).
\]
This relation together with the well-known interpolation property
\begin{align}\label{CI}
[H^{s_0}(\R),H^{s_1}(\R)]_\theta=H^{(1-\theta)s_0+\theta s_1}(\R),\qquad\theta\in(0,1),\, -\infty\leq s_0\leq s_1<\infty,
\end{align}
the choices $s\in(2,3)$, $\ov s=2$, $1>\alpha:=s/3>\beta:=2/3>0$, and the relations
 \[
  H^{2}(\R)=[L_2(\R), H^3(\R)]_{\beta} \qquad\text{and}\qquad H^s(\R)=[L_2(\R), H^3(\R)]_{\alpha},
 \]
enables us to use  abstract results  \cite{Am93, Am86, Am88}, see \cite[Theorem 1.5]{M16x}, to establish, for each  $f_0\in H^s(\mathbb{R})$, $s\in(2,3)$,  the existence of a unique classical solution  $f= f(\,\cdot\, ; f_0)$
to    \eqref{Pest1}   such that 
 \begin{equation*} 
 f\in C([0,T_+(f_0)),H^s(\mathbb{R}))\cap C((0,T_+(f_0)), H^3(\mathbb{R}))\cap C^1((0,T_+(f_0)), L_2(\mathbb{R}))
  \end{equation*}
and\begin{equation*} 
 f\in C^{(s-2)/3}([0,T],H^{2}(\mathbb{R})) \qquad\text{for all $T<T_+(f_0)$}.
  \end{equation*}

Concerning  the uniqueness claim, it suffices to  show  that any classical solution 
 \begin{equation*} 
 f\in C([0,\wt  T),H^s(\R))\cap C((0,\wt T), H^3(\R))\cap C^1((0,\wt T)), L_2(\R)),\qquad \wt T\in(0,\infty],
 \end{equation*}  
to \eqref{Pest1}   satisfies 
  \begin{equation} \label{T:EEE2}
   f\in C^\eta([0,T],H^{2}(\R))\qquad \text{for all $T\in(0,\wt T)$,}
 \end{equation}
with $\eta:=(s-2)/(s+1),$  see \cite[Theorem 1.5]{M16x}.
Let thus $T\in(0,\wt T)$ be fixed. 
The boundedness of $f:[0,T]\to H^s(\R)$, Theorem \ref{T:I1},  Proposition \ref{P:I2}, and an interpolation argument imply that
\begin{align*} 
\sup_{t\in[0,T]}\|(1+a_\mu \bA(f))^{-1}\|_{\kL(H^{r}(\R))}\leq C 
\end{align*}
for all $r\in(0,1)$ (in particular for $r=s-2$), and recalling that $\oo_1(f)[f]= (1+a_\mu \bA(f))^{-1}[\kappa(f)]$ we are led to
\begin{align}\label{LE2}
\sup_{t\in[0,T]}\|\oo_1(f)[f]\|_{H^{s-2}}\leq C 
\end{align}
Arguing as in the proof of \cite[Theorem 1.2]{M16x}, we obtain in view of \eqref{LE2}  that
\begin{align}\label{DEDE1}
\sup_{t\in(0,T]}\|\Phi_{\sigma,1}(f)[f]\|_{H^{-1}}\leq C.
\end{align}

In order to study the boundedness of 
\[
 \Phi_{\sigma,2}(f)[f]=-\Theta\bB(f)\big[(1+a_\mu \bA(f))^{-1}[f']\big]+a_\mu\bB(f)\big[(1+a_\mu \bA(f))^{-1}\big[T_{0,\rm lot}(f)[\oo_1(f)[f]]\big]\big],
\]
 we first note that
 \begin{align*} 
  \sup_{t\in[0,T]}\|(1+a_\mu \bA(f))^{-1}[f']\|_2\leq C,
 \end{align*}
 and Lemma \ref{L:99a} implies 
 \begin{align} \label{DEDE2}
  \sup_{t\in[0,T]}\|\bB(f)[(1+a_\mu \bA(f))^{-1}[f']]\|_2\leq C.
 \end{align}
To deal with the remaining term,  we may argue as   in the proof of \cite[Theorem 1.2]{M16x} to obtain, in virtue of  \eqref{LE2}, that
 \begin{align}\label{LE4}
  \sup_{t\in(0,T]}\| T_{0,\rm lot}(f)[\oo_1(f)[f]]\|_1\leq C.
 \end{align}
 Given  $t\in(0,T]$, it holds that  $f=f(t)\in H^3(\R)$ and by  Proposition \ref{P1d} $(iii)$ we deduce that $\oo_3(f):=(1+a_\mu \bA(f))^{-1}\big[T_{0,\rm lot}(f)[\oo_1(f)[f]]\big]\in L_2(\R)$ with   
\begin{align*}
  \int_\R T_{0,\rm lot}(f)[\oo_1(f)[f]]\varphi\, dx&=\int_\R\varphi(1+a_\mu \bA(f))[\oo_3(f)]\, dx=\int_\R\oo_3(f)(1+a_\mu (\bA(f))^*)[\varphi]\, dx
\end{align*}
 for all $\varphi\in H^1(\R)$, 
where $(\bA(f))^*\in\kL(L_2(\R))$ is the adjoint of $\bA(f) $, cf. Lemma \ref{L:99f}.
 Moreover, since 
 \[
 \sup_{t\in[0,T]}\|(1+a_\mu (\bA(f))^*)^{-1}\|_{\kL(H^1(\R))}\leq C,
 \]
the estimate \eqref{LE4} leads us to 
 \begin{align*}
  \Big|\int_\R \oo_3(f)\psi\, dx\Big|&=\Big|\int_\R T_{0,\rm lot}(f)[\oo_1(f)[f]](1+a_\mu (\bA(f))^*)^{-1}[\psi]\, dx\Big|\leq C\|\psi\|_{H^1} 
 \end{align*}
for all $\psi\in H^1(\R)$, hence
\begin{align}\label{LE5}
  \sup_{t\in(0,T]}\|  \oo_3 (f)\|_{H^{-1}}\leq C.
 \end{align}
Letting $(\bB(f))^*\in\kL(L_2(\R))$ denote  the adjoint of $ \bB(f)$,  it is not difficult to see that 
  \[
  (\bB(f))^*[\varphi]=-B_{0,1}(f)[\varphi]-B_{1,1}(f)[f, f'\varphi],\qquad \varphi\in L_2(\R),
  \]
  and  the Lemmas \ref{L:99a}-\ref{L:99d} yield $(\bB(f))^*\in\kL(H^1(\R))$ with
   \begin{align}\label{LE6}
 \sup_{t\in[0,T]}\| (\bB(f))^*\|_{\kL(H^1(\R))}\leq C.
 \end{align}
Since for $t\in(0,T]$ and $\varphi\in H^1(\R)$  
 \begin{align*}
  \Big|\int_\R \bB(f)[\oo_3(f)]\varphi\, dx\Big|&=\Big|\int_\R  \oo_3(f)(\bB(f))^*[\varphi]\, dx\Big| \leq \|(\bB(f))^*[\varphi]\|_{H^1}\|\oo_3(f)\|_{H^{-1}},
 \end{align*}
 we deduce from  \eqref{LE5}-\eqref{LE6} that 
   \begin{align}\label{DEDE3}
 \sup_{t\in(0,T]}\| \bB(f)[\oo_3(f)]\|_{H^{-1}}\leq C.
 \end{align}
  Gathering \eqref{DEDE1}, \eqref{DEDE2}, and \eqref{DEDE3} it follows that $f\in {\rm BC}^1((0,T], H^{-1}(\R))$, and together with $f\in C([0,T], H^s(\R))$ we conclude that 
  $f\in C^\eta([0,T],H^{2}(\R)).$ This proves the uniqueness claim.

  The criterion for global existence and the remaining regularity properties follow in the same way as in particular case $\mu_-=\mu_+$ (see the proofs of \cite[Theorem~1.2-1.3]{M16x}), and the details are therefore omitted.
\end{proof}

\section{The Muskat problem without surface tension}\label{Sec4}
In this section we neglect the surface tension effects, that is we set $\sigma=0 $.   
 Since  the curvature term does no longer appear  in \eqref{P:2}, we cannot expect  to express \eqref{P} as a quasilinear evolution  equation when $\mu_-\neq\mu_+$.
As a first step we formulate the problem \eqref{P} as an evolution problem for the free boundary $f$ only, cf. \eqref{Pest2},  which appears  to be,  in the regime where $\mu_-\neq \mu_+$, of fully nonlinear type.
 \medskip

\paragraph{\bf The abstract formulation}  
We start by solving  the  equation \eqref{P:2}.
Since $\sigma=0$, \eqref{P:2} is equivalent to
\begin{equation}\label{SP}
- c_{\rho,\mu }f' =(1+a_\mu\bA(f))[\oo],
\end{equation}
where
\[
c_{\rho,\mu}:=\frac{2k\Theta}{\mu_-+\mu_+}
\]
and where $\Theta$ is defined in \eqref{TET}.
Recalling Proposition~\ref{P:I2}, we remark that the equation   \eqref{SP} has a unique solution   $\oo\in H^1(\R)$, provided that the left-hand side satisfies $f'\in H^1(\R)$ and  the argument of $\bA$   belongs to  $ H^2(\R)$.
Hence, the same regularity  is required from $f$ on both sides of \eqref{SP}. 
 This is  the main reason  why the Muskat problem without surface tension is, for $\mu_-\neq \mu_+$, a fully nonlinear evolution problem.

\begin{prop}\label{P:88a} Given $f\in H^2(\R)$,  there exists a unique solution $\oo:=\oo(f)$ to  \eqref{SP} and
\begin{align}\label{reg0}
 \oo\in C^\omega(H^2(\R), H^1(\R)).
\end{align}
Moreover,  given $f_0\in H^2(\R)$ and $\tau\in(1/2,1),$ there exists a constant $C $ such that
\begin{align}
  \|\p_f\oo(f_0)[f]\|_2&\leq C\|f\|_{H^{1+\tau}},\label{DFF1}\\[1ex]
 \|\p_f\oo(f_0)[f]\|_{H^\tau}&\leq C\|f\|_{H^{1+2\tau-\tau^2}},\label{DFF3}\\[1ex]
    \|\p_f\oo(f_0)[f]\|_{H^1 }&\leq C\|f\|_{H^2}\label{DFF2}
\end{align}
for all $f\in H^2(\R)$.
\end{prop}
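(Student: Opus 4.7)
\textbf{Proof plan for Proposition~\ref{P:88a}.} The plan is to obtain $\oo(f)$ as the explicit formula $\oo(f)=-c_{\rho,\mu}(1+a_\mu\bA(f))^{-1}[f']$ and then read off all the stated properties from this representation. For existence and uniqueness in $H^1(\R)$, observe that since $|a_\mu|<1$ we can write $1+a_\mu\bA(f)=a_\mu(a_\mu^{-1}+\bA(f))=-a_\mu(\lambda-\bA(f))$ with $\lambda:=-a_\mu^{-1}$ satisfying $|\lambda|>1$. Proposition~\ref{P:I2} then yields $1+a_\mu\bA(f)\in\mathrm{Isom}(H^1(\R))$ for every $f\in H^2(\R)$, so \eqref{SP} has a unique solution and it lies in $H^1(\R)$ because $f'\in H^1(\R)$. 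For the analyticity \eqref{reg0} I would combine \eqref{RegA}, which asserts $\bA\in C^\omega(H^2(\R),\kL(H^1(\R)))$, with the analyticity of inversion on the open set of isomorphisms of $H^1(\R)$ and with the linearity of $f\mapsto f'$; the composition yields the analytic regularity.

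For the estimates I would differentiate the above representation. Writing $T(f):=1+a_\mu\bA(f)$ and using $\p(T^{-1})=-T^{-1}(\p T)T^{-1}$ together with $T(f_0)^{-1}[f_0']=-c_{\rho,\mu}^{-1}\oo(f_0)$, a direct computation gives
\begin{align}\label{plan:derivative}
\p_f\oo(f_0)[f]\;=\;-T(f_0)^{-1}\bigl[c_{\rho,\mu} f'+a_\mu(\p_f\bA(f_0)[f])[\oo(f_0)]\bigr].
\end{align}
The estimate \eqref{DFF2} is now almost immediate: by Proposition~\ref{P:I2} the operator $T(f_0)^{-1}$ is bounded on $H^1(\R)$, and by \eqref{RegA} the map $\bA$ is analytic $H^2(\R)\to\kL(H^1(\R))$, so $\p_f\bA(f_0)\in\kL(H^2(\R),\kL(H^1(\R)))$. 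Since $\oo(f_0)\in H^1(\R)$ and $f'\in H^1(\R)$, the right-hand side of \eqref{plan:derivative} is bounded in $H^1(\R)$ by $C\|f\|_{H^2}$.

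For \eqref{DFF1} the plan is to expand $\p_f\bA(f_0)[f]$ via the formula \eqref{FOF} and the chain rule (the derivative of $B_{n,m}$ in its $a$-variables is read off from \eqref{rell}). This produces a sum of terms of the form $f' B_{0,1}(f_0)[\oo(f_0)]$, $f_0' B_{2,2}(f_0,f_0)[f_0,f,\oo(f_0)]$, $B_{1,1}(f_0)[f,\oo(f_0)]$, and $B_{3,2}(f_0,f_0)[f,f_0,f_0,\oo(f_0)]$. Lemma~\ref{L:99a}~(i) bounds each such term in $L_2(\R)$ by a constant times $\|f'\|_\infty\|\oo(f_0)\|_2$; invoking the Sobolev embedding $H^\tau(\R)\hookrightarrow L_\infty(\R)$ for $\tau>1/2$ gives $\|f'\|_\infty\leq C\|f\|_{H^{1+\tau}}$, and $T(f_0)^{-1}$ is bounded on $L_2(\R)$ by Theorem~\ref{T:I1}, yielding \eqref{DFF1}. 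Finally, \eqref{DFF3} follows from \eqref{DFF1} and \eqref{DFF2} by complex interpolation using \eqref{CI}: since $[L_2(\R),H^1(\R)]_\tau=H^\tau(\R)$ and
\begin{align*}
[H^{1+\tau}(\R),H^2(\R)]_\tau=H^{(1-\tau)(1+\tau)+2\tau}(\R)=H^{1+2\tau-\tau^2}(\R),
\end{align*}
the claim is immediate. The only mildly delicate point is the bookkeeping of the chain-rule expansion for $\p_f\bA(f_0)[f]$; once that is in place every estimate reduces to an application of Lemma~\ref{L:99a}.
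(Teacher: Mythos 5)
Your proposal is correct and follows essentially the same route as the paper: the explicit representation $\oo(f)=-c_{\rho,\mu}(1+a_\mu\bA(f))^{-1}[f']$ with invertibility from Proposition~\ref{P:I2} and analyticity from \eqref{RegA}, the implicit-differentiation identity for $\p_f\oo(f_0)[f]$ expanded via Lemma~\ref{L:99a}, the $L_2$-bound \eqref{DI} plus the embedding $H^\tau(\R)\hookrightarrow L_\infty(\R)$ for \eqref{DFF1}, and complex interpolation \eqref{CI} for \eqref{DFF3}. (The only cosmetic remark is that your reduction $1+a_\mu\bA(f)=-a_\mu(\lambda-\bA(f))$ presupposes $a_\mu\neq0$; the case $a_\mu=0$ is trivial.)
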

\begin{proof} Since $|a_\mu|<1$, it follows from Proposition~\ref{P:I2} that 
\begin{align*}
 \oo(f):= -c_{\rho,\mu}(1+a_\mu\bA(f))^{-1}[f']\in H^1(\R)
\end{align*}
is the unique solution to \eqref{SP}.
The regularity property is a consequence of \eqref{RegA}.

Let now $f_0\in H^2(\R)$ be fixed. Using the chain rule, we find that   $\p_f\oo(f_0)[f]$ solves the equation
\begin{align}\label{FD2}
 (1+a_\mu \bA(f_0))[\p_f\oo(f_0)[f]]=-c_{\rho,\mu}f'-a_\mu\p\bA(f_0)[f][\oo(f_0)],
\end{align}
where, taking advantage of Lemma~\ref{L:99a}, we find that
\begin{align}
  \pi\p\bA(f_0)[f][\oo]&=f'B_{0,1}(f_0)[\oo]-2f_0'B_{2,2}(f_0,f_0)[f_0,f,\oo]-B_{1,1}(f_0)[f,\oo]\nonumber\\[1ex]
  &\hspace{0.424cm}+2B_{3,2}(f_0,f_0)[f_0,f_0,f,\oo]\label{FD3}
\end{align}
for all $f\in H^2(\R)$ and $\oo\in H^1(\R) $.
The estimate \eqref{DFF1} is a consequence of Lemma~\ref{L:99a}~$(i)$ and \eqref{DI}, while \eqref{DFF2}  simply states that $\p_f\oo(f_0)\in\kL(H^2(\R), H^1(\R))$.
 Finally, \eqref{DFF3} follows from \eqref{DFF1}-\eqref{DFF2} via complex interpolation, cf. \eqref{CI}.  
\end{proof}

Appealing to Proposition~\ref{P:88a}, we may reformulate \eqref{P}, after rescaling the time,  as an autonomous evolution problem  
\begin{equation}\label{Pest2}
  \p_t f= \Phi(f),\quad t\geq 0,\qquad f(0)=f_0,
\end{equation}
where $\Phi:H^2(\R)\to H^1(\R)$ is the fully nonlinear and nonlocal operator
\begin{align*}
 \Phi(f):=\bB(f)[\oo(f)],\qquad f\in H^2(\R).
\end{align*}
The regularity properties  \eqref{regB} and \eqref{reg0} ensure that  
\begin{align}\label{reg22}
  \Phi\in C^{\omega}  (H^2(\R), H^1(\R)).
\end{align} 

In the analysis of \eqref{Pest2} we have to differentiate between the cases  $\Theta=0$ and $\Theta\neq 0$.
The case when  $\Theta=0$ is special, because  for this choice  $c_{\rho,\mu}=0$ and  therewith $\oo(f)=0$ for all $f\in H^r(\R)$ with $r>3/2$, cf. Theorem~\ref{T:I1}.
Hence,  the problem \eqref{P} possesses for each $f\in H^r(\R)$ with $r>3/2$ a unique global solution $f(t)=f_0$ for all $t\in\R$.
In the remaining part of the paper we  address the nondegenerate case when 
$\Theta\neq0$.
The next task it to determine the Fr\'echet derivative $\p\Phi(f_0)$,   $f_0\in H^2(\R),$   and to investigate whether  this derivative is the generator of a strongly continuous and analytic semigroup in $\kL(H^1(\R))$.
Our analysis below shows that   the operator $\p\Phi(f_0)$ has the desired generator property, provided that   $f_0$ is chosen such that the Rayleigh-Taylor condition holds.\medskip

\paragraph{\bf The Rayleigh-Taylor condition} 
Given $f_0\in H^2(\R)$,  the  Rayleigh-Taylor condition may be reexpressed, in view of \eqref{RT} and of the formulas \eqref{eq:S1} and \eqref{V2}, as
  \begin{equation}\label{RTT2}
   a_{\text{\tiny RT}}:=c_{\rho,\mu} +\frac{a_\mu}{\pi }B_{0,1}(f_0)[\oo_0] +\frac{a_\mu}{\pi }f_0'B_{1,1}(f_0)[f_0,\oo_0]>0 ,
  \end{equation}
where, to keep the notation short, we have set
\begin{align}\label{000} 
 \oo_0:=\oo(f_0)\in H^1(\R),
\end{align}
 cf. Proposition~\ref{P:88a}. If the fluids have equal viscosities, then $a_\mu=0$ and the Rayleigh-Taylor condition is equivalent to the relation $\Theta=g(\rho_--\rho_+)>0.$ 
Since $\Theta\neq0$ and  $\bB(f_0)[\oo_0]\in H^1(\R)$ vanishes at infinity,    the Rayleigh-Taylor condition  implies also for $\mu_-\neq\mu_+$ that    $\Theta>0$, and  due to this fact we restriction 
our analysis to this case. 
Finally, we remark that  \eqref{RTT2} is equivalent to
\begin{equation}\label{RTT3}
  \inf_\R a_{\text{\tiny RT}}>0,
  \end{equation}
  and therefore the set $\cO$ of  initial data that satisfy the Rayleigh-Taylor condition and that was  introduced in Section \ref{Sec0} can be reexpressed as
\[
 \cO=\Big\{f_0\in H^2(\R)\,:\,\inf_\R a_{\text{\tiny RT}}>0 \Big\}.
 \]
 Since $\oo(0)=0$, it follows that $ 0\in\cO$ and therewith all $f_0\in H^2(\R)$ that are sufficiently small belong to this set. 
We emphasize that this set may  actually be very large, for example it is easy to infer from \eqref{RTT2} that $\cO=H^2(\R)$ if $\mu_-=\mu_+.$
\medskip

\paragraph{\bf The Fr\'echet derivative} 
Let   $f_0\in \cO$. 
Keeping \eqref{000} in mind, we compute that    
\begin{align}\label{FD}
 \p\Phi(f_0)[f]=\p\bB(f_0)[f][\oo_0]+\bB(f_0)[\p_f\oo(f_0)[f]]\qquad\text{for $f\in H^2(\R)$,}
\end{align}
where
\begin{align} 
 \p\bB(f_0)[f][\oo_0]&=-2B_{2,2}(f_0,f_0)[f_0,f,\oo_0]+f'B_{1,1}(f_0)[f_0,\oo_0]+f'_0B_{1,1}(f_0)[f,\oo_0]\nonumber\\[1ex]
 &\hspace{0.424cm}-2f_0'B_{3,2}(f_0,f_0)[f,f_0,f_0,\oo_0].\label{FD1}
\end{align}
While \eqref{FD}  follows from the chain rule, the relation  \eqref{FD1}  can be easily derived with the help of Lemma~\ref{L:99a}.
In order to establish the generator property for $\p\Phi(f_0)$, we proceed in the same way as in Section \ref{Sec3}, but now the situation is much more involved. 
To begin, we consider a continuous path
\begin{align*}
 [\tau\mapsto \Psi(\tau)]:[0,1]\to\kL(H^2(\R),H^1(\R)),
\end{align*}
with
\begin{align*}
 \Psi(\tau)[f]:=\tau\p\bB(f_0)[f][\oo_0]+\bB(\tau f_0)[w(\tau)[f]] \qquad\text{for $f\in H^2(\R)$},
\end{align*}
and where $w:[0,1]\to\kL(H^2(\R),H^1(\R))$ is a further continuous path
\begin{align}
 w(\tau)[f]:=&(1+a_\mu \bA(\tau f_0))^{-1}\Big[ -c_{\rho,\mu}f'-\frac{a_\mu}{\pi}f'B_{0,1}(f_0)[\oo_0]  -\frac{a_\mu}{\pi}(1-\tau)f'f_0'B_{1,1}(f_0)[f_0,\oo_0]\nonumber\\[1ex]
 &\hspace{3.08cm}+2\frac{\tau a_\mu}{\pi}f_0'B_{2,2}(f_0,f_0)[f_0,f,\oo_0]+\frac{\tau a_\mu}{\pi}B_{1,1}(f_0)[f,\oo_0]\nonumber\\[1ex]
 &\hspace{3.08cm}-2\frac{\tau a_\mu}{\pi}B_{3,2}(f_0,f_0)[f_0, f_0,f,\oo_0]  \Big].\label{wee}
\end{align}
 The relations \eqref{FD2}-\eqref{FD3} and \eqref{FD} show that  $\Psi(1)=\p\Phi(f_0),$ while, in view of $\bA(0)=0$ and of  \eqref{RTT2}, it holds that
 \begin{align}\label{DC}
  \Psi(0)[f]=\bB(0)[w(0)[f]]=- \bB(0)[ a_{\text{\tiny RT}}f']=- \pi H[ a_{\text{\tiny RT}}f'],
 \end{align}
 with $H$ denoting   the Hilbert transform again.
 The term on the right-hand side of \eqref{wee} which has $(1-\tau )$ as a multiplying factor has been introduced artificially. 
Due to this trick, we are able for example to write, when setting $\tau=0$, the  function $a_{\text{\tiny RT}}$ as a multiplicative term in the argument of $\bB(0)$ in \eqref{DC}.
Moreover, this artificial term  provides some useful cancellations in the proof of Theorem \ref{T2} which are, together  with our assumption  \eqref{RTT3}, 
an important ingredient  when establishing the generator property for $\Psi(1)$, see  Lemma~\ref{L:Isom} and Theorem~\ref{TK3}.
 
 We note    that the operator $w$ defined in \eqref{wee} can be estimated  in a  similar manner as the Fr\'echet derivative $\p_f\oo(f_0)$, that is
 there exists a constant $C $ such that
\begin{align}
 \|w(\tau)[f]\|_2&\leq C\|f\|_{H^{7/4}},\label{DFF1*}\\[1ex]
 \|w(\tau)[f]\|_{H^{3/4}}&\leq C\|f\|_{H^{31/16}},\label{DFF3*}\\[1ex]
   \|w(\tau)[f]\|_{H^1 }&\leq C\|f\|_{H^2}\label{DFF2*}
\end{align}
for all $\tau\in [0,1]$ and all $f\in H^2(\R)$.
Additionally, recalling \eqref{DERR} and \eqref{0lote}, the Lemmas~\ref{L:99a}-\ref{L:99d}, in particular the estimate \eqref{REF2},  lead us to the following relation 
\begin{align}
 (1+a_\mu \bA(\tau f_0))[( w(\tau)[f])']
 &=-c_{\rho,\mu}f''-\frac{a_\mu}{\pi}f''B_{0,1}(f_0)[\oo_0]-\frac{a_\mu}{\pi}(1-\tau)f''f_0'B_{1,1}(f_0)[f_0,\oo_0]\nonumber\\[1ex]
 &\hspace{0.424cm}+2\frac{\tau a_\mu}{\pi}f_0'B_{1,2}(f_0,f_0)[f_0,f''\oo_0]+\frac{\tau a_\mu}{\pi}B_{0,1}(f_0)[f''\oo_0]\nonumber\\[1ex]
 &\hspace{0.424cm}-2\frac{\tau a_\mu}{\pi}B_{2,2}(f_0,f_0)[f_0, f_0,f''\oo_0]+T_{2,{\rm lot}}(\tau)[f],\label{oto0-}
\end{align}
where   the lower order terms are encompassed by the term $T_{2,{\rm lot}}(\tau)[f]$ and
\begin{align}
   \|T_{2,{\rm lot}}(\tau)[f]\|_2\leq C\|f\|_{H^{31/16}} \label{oto0}
 \end{align}
 for all $\tau\in [0,1]$ and all $f\in H^2(\R)$.
 
 The following theorem lies at the core of our generator result in Theorem~\ref{TK3}, and its assertion is independent of whether  \eqref{RTT3} holds or not. 
 Before stating the result, we point out that $B_{0,1}(0)\circ\p_x=\pi(-\p_x^2)^{1/2}, $ where   $(-\p_x^2)^{1/2}$ is the Fourier multiplier with symbol $[\xi\mapsto |\xi|].$

 \begin{thm}\label{T2} 
Let $f\in H^2(\R)$ and  $\mu>0$ be given. 
Then, there exist $\e\in(0,1)$, a finite $\e$-locali\-za\-tion family  $\{\pi_j^\e\,:\, -N+1\leq j\leq N\} $, a   constant $K=K(\e)$, 
and for each  $ j\in\{-N+1,\ldots,N\}$ and $\tau\in[0,1]$ there 
exist bounded operators $$\bA_{ j,\tau}\in\kL(H^2(\R), H^1(\R))$$
 such that 
 \begin{equation}\label{D1}
  \|\pi_j^\e \Psi(\tau) [f]-\bA_{j,\tau}[\pi^\e_j f]\|_{H^1}\leq \mu \|\pi_j^\e f\|_{H^2}+K\|  f\|_{H^{31/16}}
 \end{equation}
 for all $ j\in\{-N+1,\ldots,N\}$, $\tau\in[0,1],$ and  $f\in H^2(\R)$. The operators $\bA_{j,\tau}$ are defined  by 
  \begin{align*} 
 \bA_{j,\tau }:=- \alpha_\tau(x_j^\e) (-\p_x^2 )^{1/2}+\beta_\tau (x_j^\e)\p_x, \qquad |j|\leq N-1, 
 \end{align*}
 where  $x_j^\e\in \supp  \pi_j^\e$, 
 \begin{align*}
 \alpha_\tau:=\pi{\Big(1-\frac{\tau f_0'^2}{1+f_0'^2}\Big)} a_{\text{\tiny{\em RT}}}, \qquad  \beta_\tau:=\tau\Big(B_{1,1}(f_0)[f_0,\oo_0]-a_\mu\pi\frac{\oo_0}{1+f_0'^2}\Big),   
 \end{align*}
 and $\oo_0:=\oo(f_0)$, respectively
 \begin{align*} 
 \bA_{N,\tau }:= -  \pi c_{\rho,\mu} (-\p_x^2)^{1/2}. 
 \end{align*}
\end{thm}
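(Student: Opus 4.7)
The plan is to adapt the localization-and-freezing strategy of Theorem~\ref{TK1} to the present $H^1$-level estimate. Fix a finite $\e$-localization family $\{\pi_j^\e\,:\,-N+1\leq j\leq N\}$ and an associated cutoff family $\{\chi_j^\e\}$ as in \eqref{i}--\eqref{c3}, with $\e\in(0,1)$ to be chosen small at the end of the argument. Since $H^2(\R)\hookrightarrow{\rm BC}^1(\R)$ and $\oo_0=\oo(f_0)\in H^1(\R)\hookrightarrow C_0(\R)$ by \eqref{000}, for $|j|\leq N-1$ the oscillations of $f_0'$, $\oo_0$, and the lower-order auxiliaries $B_{0,1}(f_0)[\oo_0]$, $B_{1,1}(f_0)[f_0,\oo_0]$ over $\supp\chi_j^\e$ can be made arbitrarily small, while on $\supp\chi_N^\e\subset[|x|\geq x_N-\e]$ the quantities $f_0$, $f_0'$, $\oo_0$ themselves are small because they vanish at infinity.

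Split $\Psi(\tau)[f]=\tau\p\bB(f_0)[f][\oo_0]+\bB(\tau f_0)[w(\tau)[f]]$ and estimate the $H^1$-norm of $\pi_j^\e\Psi(\tau)[f]-\bA_{j,\tau}[\pi_j^\e f]$ by treating the $L_2$-norm and the $L_2$-norm of its derivative separately. For the $L_2$-part the explicit formulas \eqref{FD1} and \eqref{wee} express both pieces as sums of $B_{n,m}$-integrals containing at most an $f'$, so Lemma~\ref{L:99a} together with Proposition~\ref{P:I2} yield a direct bound by $\|f\|_{H^{31/16}}$. For the $L_2$-norm of the derivative, apply \eqref{derB} to $\bB(\tau f_0)[w(\tau)[f]]$, reducing the problem to controlling $\bB(\tau f_0)[(w(\tau)[f])']$; the formula \eqref{oto0-} then exhibits the $f''$-dependence linearly and absorbs the remainder via \eqref{oto0} into the $K\|f\|_{H^{31/16}}$ term.

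The heart of the argument is the coefficient-freezing step, executed in the same spirit as the treatment of $T_1$, $T_2$, $T_3$ in the proof of Theorem~\ref{TK1}. Commute $\pi_j^\e$ past each singular integral, using integration by parts together with the commutator estimates \eqref{REF1}--\eqref{REF2} and Lemma~\ref{L:99a}~$(iii)$, and then replace $f_0$, $f_0'$, $\oo_0$, and the scalar quantities $B_{0,1}(f_0)[\oo_0](x)$, $B_{1,1}(f_0)[f_0,\oo_0](x)$ by their values at $x_j^\e$. Every resulting error is either an oscillation factor like $\|\chi_j^\e(f_0'-f_0'(x_j^\e))\|_\infty$ (absorbed into $\mu\|\pi_j^\e f\|_{H^2}$ for $\e$ small) or a lower-order $\bA$-commutator (absorbed into $K\|f\|_{H^{31/16}}$). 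After freezing, $\bA$ evaluated at the linear function $\tau f_0'(x_j^\e){\rm id}_\R$ is the zero operator (the numerator $yf'-\delta_{[x,y]}f$ in \eqref{DL1} vanishes on linear arguments) so $(1+a_\mu\bA(\tau f_0'(x_j^\e){\rm id}_\R))^{-1}=\mathrm{id}$, while $\bB$ evaluated at the same linear function equals $\pi H$. Combined with $H\p_x=(-\p_x^2)^{1/2}$ and $H^2=-\mathrm{id}$, the frozen operator reduces to an explicit linear combination of $(-\p_x^2)^{1/2}[\pi_j^\e f]$ and $(\pi_j^\e f)'$. The coefficient of $(\pi_j^\e f)'$ matches $\beta_\tau(x_j^\e)$ immediately; matching the coefficient of $(-\p_x^2)^{1/2}[\pi_j^\e f]$ with $-\alpha_\tau(x_j^\e)$ requires invoking the defining equation \eqref{SP} for $\oo_0$ evaluated at $x_j^\e$, which supplies the needed algebraic identity relating $\oo_0(x_j^\e)$, $B_{0,1}(f_0)[\oo_0](x_j^\e)$, and $B_{1,1}(f_0)[f_0,\oo_0](x_j^\e)$.

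The remaining case $j=N$ is handled by the device used for $S_{22}$ in the proof of Theorem~\ref{TK1}: replace $f_0$ on $\supp\chi_N^\e$ by a Lipschitz extension $F_N$ with $\|F_N'\|_\infty\to 0$ as $\e\to 0$, and exploit that $f_0$, $f_0'$, $\oo_0$, and each auxiliary $B_{n,m}$-quantity vanish at infinity. Consequently every contribution except the $-c_{\rho,\mu}f'$ term in $w(\tau)[f]$ (and the matching $\bB$-action on it) becomes arbitrarily small, leaving precisely $\bA_{N,\tau}=-\pi c_{\rho,\mu}(-\p_x^2)^{1/2}$. The main obstacle throughout is bookkeeping: in particular, ensuring that the artificial $(1-\tau)$-term deliberately introduced in \eqref{wee} produces exactly the cancellation that, combined with the contribution from $\tau\p\bB(f_0)[f][\oo_0]$, yields the factor $1-\tau f_0'^2/(1+f_0'^2)$ in $\alpha_\tau$; checking this cancellation while routing every commutator and oscillation error either into $\mu\|\pi_j^\e f\|_{H^2}$ (via small $\e$) or into $K\|f\|_{H^{31/16}}$ (via the lower-order multilinear estimates) is the central technical difficulty.
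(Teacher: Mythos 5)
Your proposal is correct and follows essentially the same route as the paper: localize and freeze coefficients as in Theorem~\ref{TK1}, reduce to the derivative-level estimate via \eqref{derB} and \eqref{oto0-}--\eqref{oto0}, control $\pi_j^\e(w(\tau)[f])'$ through the resolvent equation and commutators with \eqref{DI}, use $H^2=-\mathrm{id}$ to extract the drift term, invoke the defining equation \eqref{SP} for $\oo_0$ to assemble $\alpha_\tau$ and $\beta_\tau$, and treat $j=N$ with the Lipschitz extension $F_N$ and decay at infinity. The only informality is phrasing the frozen resolvent as literally the identity, but since you route this through the commutator mechanism of Theorem~\ref{TK1} it amounts to the paper's argument.
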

\begin{proof} Let $\{\pi_j^\e\,:\, -N+1\leq j\leq N\} $ be a finite $\e$-localization family   and $\{\chi_j^\e\,:\, -N+1\leq j\leq N\} $ be an associated family, 
with $\e\in(0,1)$ to be fixed later on. As before, we denote by $C$ constants which are
independent of $\e$ (and, of course, of $f\in H^2(\R)$, $\tau\in [0,1]$, and $j \in \{-N+1, \ldots, N\}$), and the constants  denoted by $K$ may depend only upon $\e.$ \medskip
 
\noindent{\em Step 1: The lower order terms.}  
Recalling Lemma~\ref{L:99a},  \eqref{DI}, \eqref{FD1},  \eqref{DFF1*}, and exploiting the embedding $H^{7/4}(\R)\hookrightarrow W^1_\infty(\R)$ we find that
 \begin{align}
  \|\pi_j^\e \Psi(\tau) [f]-\bA_{j,\tau}[\pi^\e_j f]\|_{H^1}&\leq \|\pi_j^\e \Psi(\tau) [f]-\bA_{j,\tau}[\pi^\e_j f]\|_{2} +\|(\pi_j^\e \Psi(\tau) [f]-\bA_{j,\tau}[\pi^\e_j f])'\|_{2}\nonumber\\[1ex]
  &\leq K\|f\|_{H^{7/4}} +\|\pi_j^\e (\Psi(\tau) [f])'-\bA_{j,\tau}[(\pi^\e_j f)']\|_{2}.\label{oto1}
 \end{align}
Moreover, differentiating the relations \eqref{FD1} and \eqref{wee} once, it follows from \eqref{derB}, \eqref{DERR}, \eqref{DFF3*} and the Lemmas~\ref{L:99a}-\ref{L:99d} that  
\begin{align*}
 (\Psi(\tau) [f])'&=-2\tau  B_{1,2}( f_0, f_0)[ f_0,f''\oo_0]+\tau  f''B_{1,1}( f_0)[f_0,\oo_0] +\tau  f_0' B_{0,1}( f_0 )[ f''\oo_0]\\[1ex] 
 &\hspace{0.424cm}-2\tau f_0' B_{2,2}( f_0,f_0)[ f_0,f_0,f''\oo_0]  + \bB(\tau f_0)[(w(\tau)[f])'] +T_{3,{\rm lot}}(\tau)[f],
\end{align*}
 with 
\begin{align}
   \|T_{3,{\rm lot}}(\tau)[f]\|_2\leq C(\|f\|_{H^{7/4}}+\|w(\tau)[f]\|_{H^{3/4}})\leq C\|f\|_{H^{31/16}}.  \label{oto2}
 \end{align}
 Hence, we are left to estimate the $L_2$-norm of the leading order term
 \begin{align*}
&-2\tau  B_{1,2}( f_0, f_0)[ f_0,f''\oo_0]+\tau  f''B_{1,1}( f_0)[f_0,\oo_0] +\tau  f_0' B_{0,1}( f_0 )[ f''\oo_0]\\[1ex] 
 &\hspace{0.924cm}-2\tau f_0' B_{2,2}( f_0,f_0)[ f_0,f_0,f''\oo_0]  + \bB(\tau f_0)[(w(\tau)[f])'] -\bA_{j,\tau}[(\pi^\e_j f)',
\end{align*}
 and this is performed below in several steps.
 \medskip
 
\noindent {\em Step 2.}
Given   $|j|\leq N-1$, we  let $ \bA_{j}^{k}\in\kL(H^2(\R),H^1(\R)$), $1\leq k\leq 4,$ be the operators defined by
\begin{equation}\label{Aj}
\begin{aligned}
 & \bA_{j}^1:=\frac{ f_0'(x_j^\e)\oo_0(x_j^\e)}{(1+f_0'^2(x_j^\e))^2}B_{0,1}(0)\circ\p_x,\qquad \bA_{j}^2:=B_{1,1}( f_0)[ f_0,\oo_0](x_j^\e)\p_x,\\[1ex]
 & \bA_{j}^3:=\frac{ f_0'(x_j^\e)\oo_0(x_j^\e)}{ 1+f_0'^2(x_j^\e) }B_{0,1}(0)\circ\p_x,\qquad \bA_{j}^4:=\frac{ f_0'^3(x_j^\e)\oo_0(x_j^\e)}{(1+f_0'^2(x_j^\e))^2}B_{0,1}(0)\circ\p_x,
\end{aligned}
\end{equation}
with $x_j^\e\in \supp \pi_j^\e.$ 
We   prove in this step  that 
\begin{align}
 &2\|\pi_j^\e B_{1,2}( f_0, f_0)[ f_0,f''\oo_0]-\bA_{j}^1[(\pi^\e_j f)']\|_{2}+\|\pi_j^\e f''B_{1,1}( f_0)[f_0,\oo_0]-\bA_{j}^2[(\pi^\e_j f)']\|_{2}\nonumber\\[1ex]
 &+\|\pi_j^\e f_0'B_{0,1}(  f_0)[ f''\oo_0]-\bA_{j}^3[(\pi^\e_j f)']\|_{2}+2\|\pi_j^\e f_0' B_{2,2}( f_0, f_0)[  f_0, f_0,f''\oo_0]-\bA_{j}^4[(\pi^\e_j f)']\|_{2}\nonumber\\[1ex]
 &\hspace{1cm}\leq \frac{\mu}{2} \|\pi_j^\e f\|_{H^2}+K\|f\|_{H^{7/4}}\label{oto3}
\end{align}
for all   $ |j|\leq N-1$ and  $f\in H^2(\R)$, provided that $\e\in(0,1)$ is sufficiently  small.

Since $B_{1,1}( f_0)[ f_0,\oo_0]\in H^1(\R)\subset {\rm BC}^{1/2}(\R)$, for $\e$ sufficiently small  we have 
\begin{align}
 &\hspace{-1cm}\|\pi_j^\e f''B_{1,1}( f_0)[f_0,\oo_0]-\bA_{j}^2[(\pi^\e_j f)']\|_{2}\nonumber\\[1ex]
 &=\|\pi_j^\e f''B_{1,1}( f_0)[f_0,\oo_0]- B_{1,1}( f_0)[ f_0,\oo_0](x_j^\e)(\pi^\e_j f)''\|_2\nonumber\\[1ex]
 &\leq   \|(\pi_j^\e f)''\|_2\|\chi_j^\e\big(B_{1,1}(f_0)[ f_0,\oo_0]- B_{1,1}(f_0)[f_0,\oo_0](x_j^\e)\big)\|_\infty+K\|  f\|_{H^{1}}\nonumber\\[1ex] 
 &\leq \frac{\mu}{8} \|\pi_j^\e f\|_{H^2}+K\|  f\|_{H^{1}}. \label{ca01}
\end{align}

The arguments used to estimate the remaining three terms in \eqref{oto3}  are similar, and therefore we only present in detail those for the first term.
To begin, we write
 \begin{align*}
  \pi_j^\e B_{1,2}( f_0, f_0)[f_0,f''\oo_0]-\bA_{j}^1[\pi_j^\e f]=T_{1}[f]+T_{2}[f]+T_{3}[f],
 \end{align*}
where
\begin{align*}
 T_{1}[f]&:= \pi_j^\e B_{1,2}( f_0, f_0)[ f_0,f''\oo_0]-  B_{1,2}(f_0, f_0)[ f_0,\pi_j^\e f''\oo_0],\\[1ex]
 T_{2}[f]&:=    B_{1,2}( f_0, f_0)[f_0,\pi_j^\e f''\oo_0]- \frac{f_0'(x_j^\e)}{(1+f_0'^2(x_j^\e))^2}B_{0,1}(0)[\pi_j^\e f''\oo_0],\\[1ex]
 T_{3}[f]&:=      \frac{ f_0'(x_j^\e)}{(1+f_0'^2(x_j^\e))^2}\big(B_{0,1}(0)[\pi_j^\e f''\oo_0]-\oo_0(x_j^\e)B_{0,1}(0)[(\pi_j^\e f)'']\big).
\end{align*}
The term $T_{1}[f] $ can be estimate in the same way as  the term $T_{11}[h]$  in the proof of Theorem~\ref{TK1}. Indeed, integration by parts  together with Lemma~\ref{L:99a} yields
\begin{align}\label{ca11}
 \|T_{1}[f]\|_2\leq K\|f\|_{H^{7/4}} 
\end{align}
 for all $-N+1\leq j\leq N$.
 
 Concerning $T_{2}[f],$ we appeal to  \eqref{rell} and   write
 \begin{align*}
  T_{2}[f]&=B_{1,2}( f_0, f_0)[ f_0,\pi_j^\e f''\oo_0]- B_{1,2}( f_0'(x_j^\e){\rm id}_{\R}, f_0'(x_j^\e){\rm id}_{\R})[ f_0'(x_j^\e){\rm id}_{\R},\pi_j^\e f''\oo_0]\\[1ex]
  &= B_{1,2}( f_0, f_0)[f_0- f_0'(x_j^\e){\rm id}_{\R},\pi_j^\e f''\oo_0]\\[1ex]
  &\hspace{0.424cm}-B_{3,3}( f_0, f_0,   f_0'(x_j^\e){\rm id}_{\R}) [f_0'(x_j^\e){\rm id}_{\R}, f_0- f_0'(x_j^\e){\rm id}_{\R},f_0+ f_0'(x_j^\e){\rm id}_{\R},\pi_j^\e f''\oo_0]\\[1ex]
  &\hspace{0.424cm} - B_{3,3}( f_0,   f_0'(x_j^\e){\rm id}_{\R},   f_0'(x_j^\e){\rm id}_{\R}) [f_0'(x_j^\e){\rm id}_{\R},f_0- f_0'(x_j^\e){\rm id}_{\R},f_0+ f_0'(x_j^\e){\rm id}_{\R}, \pi_j^\e f''\oo_0],
 \end{align*}
and, since $\chi_j^\e=1$ on $\supp \pi_j^\e$, we further have $T_{2}[f]=T_{21}[f]-T_{22}[f],$ where
  \begin{align*}
  T_{21}[f] &:= \chi_j^\e B_{1,2}( f_0, f_0)[f_0- f_0'(x_j^\e){\rm id}_{\R},\pi_j^\e f''\oo_0]\\[1ex]
  & \hspace{0.424cm}\phantom{:}-\chi_j^\e B_{3,3}( f_0, f_0,   f_0'(x_j^\e){\rm id}_{\R}) [f_0'(x_j^\e){\rm id}_{\R},f_0- f_0'(x_j^\e){\rm id}_{\R},f_0+ f_0'(x_j^\e){\rm id}_{\R},\pi_j^\e f''\oo_0]\\[1ex]
  &\hspace{0.424cm} \phantom{:}- \chi_j^\e B_{3,3}( f_0, f_0'(x_j^\e){\rm id}_{\R},   f_0'(x_j^\e){\rm id}_{\R}) [f_0'(x_j^\e){\rm id}_{\R}, f_0- f_0'(x_j^\e){\rm id}_{\R},f_0+ f_0'(x_j^\e){\rm id}_{\R},\pi_j^\e f''\oo_0],
 \end{align*}
 and 
  \begin{align*}
  T_{22}[f]&:=\PV\int_\R\frac{\big[\delta_{[\cdot,y]}(f_0- f_0'(x_j^\e){\rm id}_{\R})/y\big]\big[\delta_{[\cdot,y]}\chi_j^\e/y \big]}{\big[1+\big(\delta_{[\cdot,y]}f_0/y\big)^2\big]^2} \tau_y (\pi_j^\e f''\oo_0) \, dy\\[1ex]
  &\hspace{0.424cm} \phantom{:}-f_0'(x_j^\e)\PV\int_\R\frac{\big[\delta_{[\cdot,y]}(f_0- f_0'(x_j^\e){\rm id}_{\R})/y\big]\!\big[\delta_{[\cdot,y]}(f_0+f_0'(x_j^\e){\rm id}_{\R})/y\big]\!
  \big[\delta_{[\cdot,y]}\chi_j^\e/y \big]}{\big[1+\big(\delta_{[\cdot,y]}(f_0'(x_j^\e){\rm id}_{\R})/y\big)^2\big]
  \big[1+\big(\delta_{[\cdot,y]}f_0/y\big)^2\big]^2}  \tau_y (\pi_j^\e f''\oo_0) \, dy\\[1ex]
  &\hspace{0.424cm} \phantom{:}-f_0'(x_j^\e)\PV\int_\R\frac{\big[\delta_{[\cdot,y]}(f_0- f_0'(x_j^\e){\rm id}_{\R})/y\big]\!\big[\delta_{[\cdot,y]}(f_0+f_0'(x_j^\e){\rm id}_{\R})/y\big]\!
  \big[\delta_{[\cdot,y]}\chi_j^\e/y \big]}{\big[1+\big(\delta_{[\cdot,y]}(f_0'(x_j^\e){\rm id}_{\R})/y\big)^2\big]^2
  \big[1+\big(\delta_{[\cdot,y]}f_0/y\big)^2\big]} \tau_y (\pi_j^\e f''\oo_0) \, dy.
 \end{align*}
 In this formula we denote by $\{\tau_y\}_{y\in\R}$ the   translation $C_0$-group on $L_2(\R)$ introduced in Lemma \ref{L:99e}.  
 Integrating by parts, we get
 \begin{align*}
 \|T_{22}[f]\|_2\leq K\|f\|_{H^{7/4}}.
\end{align*}
Furthermore,   the arguments used to derive  estimate \eqref{LL2aaa} lead us to 
 \begin{align*} 
 \|T_{21}[f]\|_2\leq C\|f'_0-f'_0(x_j^\e)\|_{L_\infty(\supp\chi_j^\e)}\|\pi_j^\e f''\oo_0\|_2\leq \frac{\mu}{16}  \|\pi_j^\e f\|_{H^2}+K\|  f\|_{H^{1}},
\end{align*} 
provided that $\e$ is sufficiently small.
Hence,  for $\e$ sufficiently small and $|j|\leq N-1,$ it holds that
 \begin{align}\label{ca12}
 \|T_{2}[f]\|_2\leq   \frac{\mu}{32}  \|\pi_j^\e f\|_{H^2}+K\|  f\|_{H^{7/4}}.
\end{align}
Finally, since $\oo_0\in H^1(\R)\subset {\rm BC}^{1/2}(\R)$  we get 
\begin{align*}
 \|T_3[f]\|_2&\leq \|B_{0,1}(0)[(\pi_j^\e f)''(\oo_0-\oo_0(x_j^\e))]\|_2+K\|f\|_{H^1}\nonumber\\[1ex]
 &\leq C\|(\pi_j^\e f)''(\oo_0-\oo_0(x_j^\e))\|_2+K\|f\|_{H^1}\nonumber\\[1ex]
  &\leq C \|\chi_j^\e(\oo_0-\oo_0(x_j^\e))\|_\infty\|\pi_j^\e f\|_{H^2}+K\|f\|_{H^1}\nonumber\\[1ex]
  &\leq  \frac{\mu}{32}\|\pi_j^\e f\|_{H^2}+K\|f\|_{H^1},
\end{align*}
provided that  $\e$ is sufficiently small, and together with  \eqref{ca11} and  \eqref{ca12} we obtain 
\begin{align*}
 2\|\pi_j^\e B_{1,2}( f_0, f_0)[ f_0,f''\oo_0]-\bA_{j}^1[(\pi^\e_j f)']\|_{2}\leq \frac{\mu}{8} \|\pi_j^\e f\|_{H^2}+K\|f\|_{H^{7/4}}.
\end{align*}
Since the other two terms in \eqref{oto3} can be estimated in the same way, we conclude, in view of \eqref{ca01}, that  \eqref{oto3} is satisfied. 
\medskip

 \noindent{\em Step 3.} Given $\tau\in[0,1]$, we let
\begin{align*} 
 c_\tau:=a_{\text{\tiny RT}}-\frac{\tau a_\mu}{\pi}f_0'B_{1,1}(f_0)[f_0,\oo_0]\in H^1(\R).
\end{align*}
Moreover, for each  $l\in\{0,1\}$, $\tau\in[0,1],$ and $|j|\leq N-1,$ we let  $ \bA_{j,\tau}^{5+l}\in\kL(H^2(\R), H^1(\R))$ denote  the operator  
\begin{align*} 
 \bA_{j,\tau}^{5+l}:=\frac{ f_{l,\tau}'^2(x_j^\e)}{1+\tau^2f_0'^2(x_j^\e)}\Big[ - c_\tau(x_j^\e) B_{0,1}(0)\circ\p_x- \tau a_\mu\pi\frac{\oo_0(x_j^\e)}{ 1+ f_0'^2(x_j^\e) } \p_x\Big],
\end{align*}
where $x_j^\e\in \supp \pi_j^\e$ and $f_{l,\tau} :=(1-l)\tau f_0+l\, {\rm id}_\R. $
 We next prove  that
\begin{align}\label{T4T}
 \Big\|\pi_j^\e \bB(\tau f_0)[(w(\tau)[f])']-\sum_{l=0}^1\bA_{j,\tau}^{5+l}[(\pi^\e_j f)']\Big\|_{2}\leq \frac{\mu}{2} \|\pi_j^\e f\|_{H^2}+K\|  f\|_{H^{31/16}}
\end{align}
 for all $ |j|\leq N-1$, $\tau\in[0,1],$ and  $f\in H^2(\R)$, provided that $\e$ is chosen sufficiently small.
 We begin by observing that
 \[
  \bB(\tau f_0)=\sum_{l=0}^1f_{l,\tau}'B_{1,1}(\tau f_0)[f_{l,\tau},\,\cdot\,],
 \]
and therefore
 \begin{align*}
  \pi_j^\e \bB(\tau f_0)[(w(\tau)[f])']-\sum_{l=0}^1\bA_{j,\tau}^{5+l}[(\pi^\e_j f)']=\sum_{l=0}^1\Big(\pi_j^\e f_{l,\tau}'B_{1,1}(\tau f_0)[f_{l,\tau}, (w(\tau)[f])']-\bA_{j,\tau}^{5+l}[(\pi^\e_j f)']\Big).
 \end{align*}
We now decompose
\[\pi_j^\e f_{l,\tau}'B_{1,1}(\tau f_0)[f_{l,\tau}, (w(\tau)[f])']-\bA_{j,\tau}^{5+l}[(\pi^\e_j f)']=T_{4}[f]+T_{5}[f]+T_{6}[f],\]
where
\begin{align*}
 T_{4}[f]&:=   \pi_j^\e f_{l,\tau}'B_{1,1}(\tau f_0)[f_{l,\tau}, (w(\tau)[f])']-   f_{l,\tau}' (x_j^\e)B_{1,1}(\tau f_0)[f_{l,\tau},\pi_j^\e (w(\tau)[f])'],\\[1ex]
 T_{5}[f]&:=  f_{l,\tau}' (x_j^\e)B_{1,1}(\tau f_0)[f_{l,\tau},\pi_j^\e (w(\tau)[f])']-  \frac{f_{l,\tau}'^2(x_j^\e)}{1+\tau^2f_0'^2(x_j^\e)}  B_{0,1}(0)[ \pi_j^\e (w(\tau)[f])'],\\[1ex]
 T_{6}[f]&:=  \frac{f_{l,\tau}'^2(x_j^\e)}{1+\tau^2f_0'^2(x_j^\e)}  B_{0,1}(0)[ \pi_j^\e (w(\tau)[f])']- \bA_{j,\tau}^{5+l}[(\pi_j^\e f)'].
\end{align*}
The arguments used to derive  \eqref{LL2c}  combined with \eqref{DFF1*} imply that
\begin{align}\label{ca41a}
 \|T_{4}[f]\|_2\leq C\|\chi_j^\e(f_{l,\tau}'-f_{l,\tau}'(x_j^\e))  \|_\infty\|\pi^\e_j(w(\tau)[f])' \|_2 +K\|f\|_{H^{7/4}},
\end{align}
  and we are left to estimate $\|\pi^\e_j(w(\tau)[f])' \|_2.$
To this end we compute for $j\in\{-N+1,\ldots, N\}$ that 
\begin{align}
 (1+a_\mu\bA(\tau f_0))[\pi_j^\e(w(\tau)[f])' ]=\pi_j^\e(1+a_\mu\bA(\tau f_0))[(w(\tau)[f])']-\frac{  a_\mu}{\pi}T_{1j, {\rm lot}}(\tau f_0)[w(\tau)[f]],\label{ERG1}
\end{align}
where $T_{1j, {\rm lot}}$ are defined in \eqref{T1j}.
Moreover, recalling \eqref{oto0-}, it follows  that 
\begin{align}
 \pi_j^\e(1+a_\mu\bA(\tau f_0))[(w(\tau)[f])'] &=-c_\tau \pi_j^\e f''+2\frac{\tau a_\mu}{\pi}f_0'B_{1,2}(f_0,f_0)[f_0,\pi_j^\e f''\oo_0]\nonumber\\[1ex]
 &\hspace{0.424cm}+\frac{\tau a_\mu}{\pi}B_{0,1}(f_0)[\pi_j^\e f''\oo_0]-2\frac{\tau a_\mu}{\pi}B_{2,2}(f_0,f_0)[f_0, f_0,\pi_j^\e f''\oo_0]\nonumber\\[1ex]
 &\hspace{0.424cm}+\pi_j^\e T_{2,{\rm lot}}(\tau)[f] +T_{2j, {\rm lot}}(\tau)[f],\label{ERG3}
\end{align}
for $j\in\{-N+1,\ldots,N\} $, where
\begin{align*}
T_{2j, {\rm lot}}(\tau)[f]&:=2\frac{\tau a_\mu}{\pi}f_0'\big(\pi_j^\e B_{1,2}(f_0,f_0)[f_0, f''\oo_0]-B_{1,2}(f_0,f_0)[f_0,\pi_j^\e f''\oo_0]\big)\\[1ex]
 &\hspace{0.424cm}\phantom{:}+\frac{\tau a_\mu}{\pi}\big(\pi_j^\e B_{0,1}(f_0)[ f''\oo_0]-B_{0,1}(f_0)[\pi_j^\e f''\oo_0]\big)\\[1ex]
 &\hspace{0.424cm}\phantom{:}-2\frac{\tau a_\mu}{\pi}\big(\pi_j^\e B_{2,2}(f_0,f_0)[f_0, f_0, f''\oo_0]-B_{2,2}(f_0,f_0)[f_0, f_0,\pi_j^\e f''\oo_0]\big).
\end{align*}
Integrating by parts,   Lemma~\ref{L:99a}  leads us to
\begin{align}\label{ERG4}
 \|T_{2j, {\rm lot}}(\tau)[f]\|_2\leq K\|f\|_{H^{7/4}}.
\end{align}
In view of \eqref{DI}, we deduce from  \eqref{DFF1*}, \eqref{oto0}, and \eqref{ERG1}-\eqref{ERG4} that 
\begin{align}\label{Ij}
 \max_{-N+1\leq j\leq N}\|\pi_j^\e (w(\tau)[f])'\|_2\leq C\|\pi_j^\e f\|_{H^2}+K\|f\|_{H^{31/16}},
\end{align}
and   \eqref{ca41a}  yields, for $\e$  sufficiently small, that 
\begin{align}\label{ca41}
 \|T_{4}[f]\|_2\leq \frac{\mu}{6}\|\pi_j^\e f\|_{H^2} +K\|f\|_{H^{31/16}}.
\end{align}

Invoking \eqref{Ij},  the arguments used to deduce \eqref{LL2aaa} show that 
\begin{align}\label{ca42}
 \|T_{5}[f]\|_2\leq \frac{\mu}{6}\|\pi_j^\e f\|_{H^2} +K\|f\|_{H^{31/16}}
\end{align}
for all $|j|\leq N-1$, provided that $\e$ is  sufficiently small.

Concerning the last term $T_{6}[f]$, we first recall that $H^2=-{\rm id}_{L_2(\R)}$, cf. \cite{St93},  and therewith we get
$$\big(B_{0,1}(0)\big)^2=\pi^2 H^2=-\pi^2{\rm id}_{L_2(\R)}.$$

Using the latter relation together with the definition of $\bA_{j,\tau}^{5+l}$, it follows that
\begin{align}
 \|T_6[f]\|_2&\leq C\Big\|\pi_j^\e(w(\tau)[f])'+c_\tau(x_j^\e)(\pi_j^\e f)''-\frac{\tau a_\mu}{\pi}\frac{\oo_0(x_j^\e)}{ 1+ f_0'^2(x_j^\e) }B_{0,1}(0)[(\pi_j^\e f)'']\Big\|_2\nonumber\\[1ex]
 &\leq C\Big\|\pi_j^\e(w(\tau)[f])'+c_\tau(x_j^\e)\pi_j^\e f''-\frac{\tau a_\mu}{\pi}\frac{\oo_0(x_j^\e)}{ 1+ f_0'^2(x_j^\e) }B_{0,1}(0)[\pi_j^\e f'']\Big\|_2+K\|f\|_{H^1}.\label{GFG1}
\end{align}
Furthermore,   \eqref{FOF} and \eqref{ERG3} lead us to the following identity
\begin{align}
 &\hspace{-1cm}\pi_j^\e(w(\tau)[f])'+c_\tau(x_j^\e)\pi_j^\e f''-\frac{\tau a_\mu}{\pi}\frac{\oo_0(x_j^\e)}{ 1+ f_0'^2(x_j^\e) }B_{0,1}(0)[\pi_j^\e f'']\nonumber\\[1ex]
 &= (c_\tau (x_j^\e)-c_\tau)\pi_j^\e f''+2\frac{\tau a_\mu}{\pi}\Big[f_0'B_{1,2}(f_0,f_0)[f_0,\pi_j^\e f''\oo_0] -\frac{ f_0'^2(x_j^\e)\oo_0(x_j^\e)}{(1+ f_0'^2(x_j^\e))^2}B_{0,1}[\pi_j^\e f'']\Big]\nonumber\\[1ex]
 &\hspace{0.424cm}+\frac{\tau a_\mu}{\pi}\Big[ B_{0,1}(f_0)[\pi_j^\e f''\oo_0]- \frac{\oo_0(x_j^\e)}{ 1+ f_0'^2(x_j^\e) }B_{0,1}[\pi_j^\e f'']\Big]\nonumber\\[1ex]
 &\hspace{0.424cm}-2\frac{\tau a_\mu}{\pi}\Big[  B_{2,2}(f_0,f_0)[f_0, f_0,\pi_j^\e f''\oo_0]-\frac{\tau f_0'^2(x_j^\e)\oo_0(x_j^\e)}{(1+ f_0'^2(x_j^\e))^2}B_{0,1}[\pi_j^\e f'']\Big]\nonumber\\[1ex]
 &\hspace{0.424cm}+\pi_j^\e T_{2,{\rm lot}}(\tau)[f] +T_{2j, {\rm lot}}(\tau)[f]\nonumber\\[1ex]
 &\hspace{0.424cm}-\frac{a_\mu}{\pi}\Big[\tau f_0'\pi_j^\e B_{0,1}(\tau f_0)[(w(\tau)[f])'] -\frac{ \tau f_0'(x_j^\e) }{ 1+ \tau^2 f_0'^2(x_j^\e) }B_{0,1}[\pi_j^\e (w(\tau)[f])']\Big] \nonumber\\[1ex] 
 &\hspace{0.424cm}+\frac{a_\mu}{\pi}\Big[\pi_j^\e B_{1,1}(\tau f_0)[\tau f_0,(w(\tau)[f])'] -\frac{ \tau f_0'(x_j^\e) }{ 1+ \tau^2 f_0'^2(x_j^\e) }B_{0,1}[\pi_j^\e (w(\tau)[f])']\Big]. \label{GFG2}
\end{align}
In order to  estimate the first  term   on the right-hand side of \eqref{GFG2}  we rely on the property that $c_\tau\in {\rm BC}^{1/2}(\R).$
The next three terms  are of the same type as those estimated in  {\em Step 2} above,  while each of  the last two expressions can be written as a sum of two terms for
which we can use the arguments that led to \eqref{ca41a} and \eqref{ca42}.
Altogether, we obtain 
\begin{align} 
 \Big\|\pi_j^\e(w(\tau)[f])'+c_\tau(x_j^\e)\pi_j^\e f''-\frac{\tau a_\mu}{\pi}\frac{\oo_0(x_j^\e)}{ 1+ f_0'^2(x_j^\e) }B_{0,1}(0)[\pi_j^\e f'']\Big\|_{2}\leq \frac{\mu}{6C}\|\pi_j^\e f\|_{H^2} +K\|f\|_{H^{31/16}}\label{oto4}
\end{align}
for all $|j|\leq N-1$, provided that $\e$ is sufficiently small. 
In \eqref{oto4} we denote  by $C$ the positive constant which appears in \eqref{GFG1}. 
Hence, recalling \eqref{GFG1}, we get 
\begin{align}
 \|T_6[f]\|_2\leq \frac{\mu}{6}\|\pi_j^\e f\|_{H^2} +K\|f\|_{H^{31/16}}\label{oto5}.
\end{align} 
The estimate \eqref{T4T} follows now from \eqref{ca41a}, \eqref{ca42}, and \eqref{oto5}.\medskip

 \noindent{\em Step 4.} Gathering \eqref{oto1}, \eqref{oto2},  \eqref{oto3}, and \eqref{T4T}, we conclude that the estimate \eqref{D1} holds for $|j|\leq N-1$ provided that
\begin{align}\label{LR}
 \bA_{j,\tau} =-2\tau \bA_{j}^1+\tau\bA_{j}^2+\tau \bA_{j}^3-2\tau \bA_{j }^4+\sum_{l=0}^1 \bA_{j,\tau}^{5+l}.
\end{align}
Exploiting the fact that $\oo_0$  is the solution to the   equation $\oo_0=-c_{\rho,\mu}f_0'-a_\mu\bA(f_0)[\oo_0]$,  it follows at once that \eqref{LR} is satisfied. 
This completes the proof of \eqref{D1} for $|j|\leq N-1.$\medskip

 \noindent{\em Step 5.} We are left to prove \eqref{D1} for $j=N.$ This estimate  follows   by combining arguments from the previous steps with those presented in the second part of the proof of Theorem~\ref{TK1}, 
 and therefore we omit    the lengthy details.
\end{proof}

We now reconsider  the Fourier multipliers $\bA_{\tau,j}$ found in Theorem~\ref{T2} and  we notice that if $f_0$ is chosen such that the Rayleigh-Taylor condition \eqref{RTT3} holds, then there exists a constant $\eta>0$  with the property  that 
\[\eta\leq \alpha_\tau\leq1/\eta\quad\text{and}\quad |\beta_\tau|\leq 1/\eta \qquad\text{for all $\tau\in[0,1]$.} \]
Moreover, letting $\bA_{\alpha,\beta}$, with $\alpha\in[\eta,1/\eta]$ and $|\beta|\leq 1/\eta$, denote the Fourier multiplier
\[
\bA_{\alpha,\beta}:=-\alpha (-\p_x^2)^{1/2}+\beta\p_x,
\]
we may find, similarly as in \cite[Proposition 4.3]{M16x}, a constant  $\kappa_0\geq1$  such that  
  \begin{align}
 \kappa_0\|(\lambda-\bA_{\alpha,\beta})[f]\|_{H^1}\geq  |\lambda|\cdot\|f\|_{H^1}+\|f\|_{H^2}\label{14K**}
\end{align}
for all $\alpha\in[\eta,1/\eta]$, $|\beta|\leq 1/\eta,$   $\lambda\in\C$ with $\re \lambda\geq 1$, and $f\in H^2(\R)$.

In order to establish the desired generation result for $\p\Phi(f_0)=\Psi(1)$, we next show that the operator $\omega-\Psi(0)$, with $\Psi(0)$ defined in \eqref{DC}, is invertible  for large $\omega>0$. 
In contrast to the analysis in Section \ref{Sec3}, where the invertibility of  the translation $\omega-\Phi_{\sigma,1}(0)$, with $\omega>0$, follows easily from the fact that this operator is a Fourier multiplier, cf. Theorem~\ref{TK2},
a more involved analysis is required in order to establish the invertibility of $\omega-\Psi(0)$.

\begin{lemma}\label{L:Isom}
Given $\delta>0$ and $h\in H^1(\R)$, let $a:=\delta+h $ and assume that  
\[\inf_\R a>0.\] 
Then, there exists $\omega_0>0$ such that  
\begin{align*}
 \lambda+H[ a\p_x]\in{\rm Isom }(H^2(\R),H^1(\R))
 \end{align*}
  for all $\lambda\in[\omega_0,\infty)$.
\end{lemma}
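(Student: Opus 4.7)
The plan is to establish a uniform a priori estimate of the form $\|f\|_{H^2} + |\lambda|\,\|f\|_{H^1} \leq C\,\|(\lambda+H[a\p_x])f\|_{H^1}$ for $\lambda$ large, and then conclude invertibility by the method of continuity along the affine homotopy $a_\tau:=\delta+\tau h$, $\tau\in[0,1]$. The endpoint $\tau=0$ is harmless: since $H\p_x=(-\p_x^2)^{1/2}$, the operator $\lambda+\delta(-\p_x^2)^{1/2}$ is a Fourier multiplier with strictly positive symbol $\lambda+\delta|\xi|$, hence an isomorphism $H^2(\R)\to H^1(\R)$ for every $\lambda>0$. Moreover the path stays uniformly elliptic: because $\inf_\R a>0$ and $\delta>0$, one checks from $a_\tau(x)=(1-\tau)\delta+\tau a(x)$ that $\inf_{\tau,x} a_\tau(x)\geq\min(\delta,\inf_\R a)>0$, while $\sup_{\tau}\|a_\tau-\delta\|_{H^1}\leq\|h\|_{H^1}$. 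Granted the uniform a priori estimate, the method of continuity (as in Theorem~\ref{T:I1}) delivers invertibility at $\tau=1$.

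The a priori estimate I would obtain by freezing coefficients, following the strategy of Theorems \ref{TK1} and~\ref{T2}. Choose a finite $\e$-localization family $\{\pi_j^\e\,:\,-N+1\leq j\leq N\}$ with $\e\in(0,1)$ to be fixed, and an associated family $\{\chi_j^\e\}$. For $|j|\leq N-1$ pick $x_j^\e\in\supp\pi_j^\e$, and for $j=N$ use the constant $a_\tau^\infty:=\delta$ (exploiting that $h\in H^1(\R)\hookrightarrow C_0(\R)$ vanishes at infinity). Using $H\p_x=(-\p_x^2)^{1/2}$, one decomposes
\begin{align*}
\pi_j^\e H[a_\tau\p_x]f - a_\tau(x_j^\e)(-\p_x^2)^{1/2}[\pi_j^\e f]
&= \pi_j^\e H\bigl[(a_\tau-a_\tau(x_j^\e))f'\bigr] + [\pi_j^\e,H]\bigl[a_\tau f'\bigr] \\
&\quad - a_\tau(x_j^\e)\,[(-\p_x^2)^{1/2},\pi_j^\e]f.
\end{align*}
For $\e$ small, $\|\chi_j^\e(a_\tau-a_\tau(x_j^\e))\|_\infty$ is uniformly small in $\tau$ by the uniform continuity of $h$ (and, in the far patch, by $h(x)\to0$ as $|x|\to\infty$), so the first term is bounded in $H^1$ by $\mu\|\pi_j^\e f\|_{H^2}+K(\e)\|f\|_{H^1}$. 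The two commutator terms are smoothing of order one (after integration by parts they are of the type handled by Lemma~\ref{L:99a}) and contribute only $K(\e)\|f\|_{H^1}$.

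Combining this localization with the Fourier-multiplier bound
\begin{equation*}
\kappa_0\,\|(\lambda+a_\tau(x_j^\e)(-\p_x^2)^{1/2})g\|_{H^1}\geq |\lambda|\,\|g\|_{H^1}+\|g\|_{H^2},\qquad \re\lambda\geq 1,
\end{equation*}
which holds uniformly in $j,\tau$ since $a_\tau(x_j^\e)$ is bounded above and below, applied to $g=\pi_j^\e f$, then summing over $j$ and invoking Remark~\ref{R:3} to recover the ambient $H^1$- and $H^2$-norms, yields
\begin{equation*}
\|f\|_{H^2}+|\lambda|\,\|f\|_{H^1}\leq C\,\|(\lambda+H[a_\tau\p_x])f\|_{H^1}+C\mu\,\|f\|_{H^2}+K(\e)\,\|f\|_{H^1}.
\end{equation*}
Fixing $\mu$ small enough to absorb the $H^2$-contribution into the left-hand side, and then taking $\lambda\geq\omega_0$ large to absorb the $K(\e)\|f\|_{H^1}$-term, gives the desired uniform estimate. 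The lemma then follows from the method of continuity together with the invertibility of the endpoint $\lambda+\delta(-\p_x^2)^{1/2}$.

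The main obstacle I anticipate is the bookkeeping in the far-field patch $j=N$: one cannot freeze at a finite point, and must instead replace $a_\tau$ on $\supp\pi_N^\e$ by the constant $\delta$ up to a term whose $L_\infty$-norm is controlled by $\sup_{|x|\geq x_N-\e}|h(x)|\to 0$ as $\e\to 0$. This is exactly the role played by the auxiliary function $F_N$ in the proof of Theorem~\ref{TK1}, and a parallel construction is required here. All intermediate commutator estimates must be tracked to depend only on $\|h\|_{H^1}$ and $\inf_{\tau,x}a_\tau$, so that constants are genuinely uniform in $\tau\in[0,1]$.
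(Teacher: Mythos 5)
Your proposal is correct and follows essentially the same route as the paper's proof: the homotopy $a_\tau=(1-\tau)\delta+\tau a$, a frozen-coefficient localization estimate with the far-field patch treated via the decay of $h$, the uniform Fourier-multiplier bound for $\lambda+\alpha(-\p_x^2)^{1/2}$, summation over the localization family with Remark~\ref{R:3}, and the method of continuity from the invertible endpoint $\lambda+\delta(-\p_x^2)^{1/2}$. The only cosmetic differences are that the paper first commutes $\pi_j^\e$ past the Hilbert transform (so the coefficient oscillation multiplies the localized $\pi_j^\e f''$, which is what makes the $\chi_j^\e$-smallness argument literally applicable) and records the lower-order remainder in $H^{7/4}$ rather than $H^1$, both of which your argument absorbs in the same way.
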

\begin{proof} 
We introduce the continuous path $[\tau\mapsto B(\tau)]:[0,1]\to \kL(H^2(\R), H^1(\R))$, where
\[
B(\tau):=  H[ a_{\tau,\delta}\p_x]\qquad \text{and}\qquad a_{\tau,\delta}:=(1-\tau)\delta+\tau a,
\]
 and we prove that there exist constants $\omega_0>0$ and  $C>0$ with the property that 
 \begin{align}\label{EDY}
  \|(\lambda+B(\tau))[f]\|_{ H^1(\R)}\geq C\|f\|_{H^2(\R)}
 \end{align}
 for all $\tau\in[0,1]$, $\lambda\in[\omega_0,\infty),$ and $f\in H^2(\R)$.
Having established \eqref{EDY},   the method of continuity  together with  $B(1)= H[ a\p_x]$  and the observation that  $\lambda+B(0)$ is an invertible Fourier multiplier (the symbol of
$\lambda+B(0)$ is $m_\lambda(\xi):= \lambda+\delta |\xi|$, $\xi \in\R$) yields the desired claim.

We first prove that, given $\mu>0$, there exists a finite $\e$-localization family  $\{\pi_j^\e\,:\, -N+1\leq j\leq N\}$, a constant $K=K(\e)$, 
and for each  $ j\in\{-N+1,\ldots,N\}$ and $\tau\in[0,1]$ there 
exist bounded operators $$\bB_{ j,\tau}\in\kL(H^2(\R), H^1(\R))$$
 such that 
 \begin{equation}\label{D1+}
  \|\pi_j^\e B(\tau) [f]-\bB_{j,\tau}[\pi^\e_j f]\|_{H^1}\leq \mu \|\pi_j^\e f\|_{H^2}+K\|  f\|_{H^{7/4}}
 \end{equation}
 for all $ j\in\{-N+1,\ldots,N\}$, $\tau\in[0,1],$ and  $f\in H^2(\R)$. 
 The operators $\bB_{j,\tau}$ are defined  by 
  \begin{align*} 
 \bB_{j,\tau }:= a_{\tau,\delta}(x_j^\e)H\circ \p_x, \qquad |j|\leq N-1, 
 \end{align*}
 with $x_j^\e\in\supp  \pi_j^\e$, respectively
\begin{align*} 
 \bB_{N,\tau }:= \delta H\circ \p_x. 
 \end{align*}
 Indeed, for $|j|\leq N-1$ we obtain by using integration by parts that
 \begin{align*}
    \|\pi_j^\e B(\tau) [f]-\bB_{j,\tau}[\pi^\e_j f]\|_{H^1}&\leq K\|f\|_{H^{7/4}}+\|\pi_j^\e H[ a_{\tau,\delta} f'']-a_{\tau,\delta}(x_j^\e)H[\pi_j^\e f'']\|_2\\[1ex]
    &\leq K\|f\|_{H^{7/4}}+\|\pi_j^\e H[ a_{\tau,\delta} f'']- H[ \pi_j^\e a_{\tau,\delta} f'']\|_2\\[1ex]
    &\hspace{0.424cm}+\|H[( a_{\tau,\delta}  -a_{\tau,\delta}(x_j^\e))\pi_j^\e f'']\|_2\\[1ex]
    &   \leq K\|f\|_{H^{7/4}}+\|\chi_j^\e( a_{\tau,\delta}  -a_{\tau,\delta}(x_j^\e))\|_\infty \|(\pi_j^\e f)''\|_2,
 \end{align*}
and \eqref{D1+} holds for $|j|\leq N-1$, if $\e$ is sufficiently small. Similarly,
\begin{align*}
    \|\pi_N^\e B(\tau) [f]-\bB_{N,\tau}[\pi^\e_N f]\|_{H^1}&\leq K\|f\|_{H^{7/4}}+\|\pi_N^\e H[ a_{\tau,\delta} f'']-\delta H[\pi_N^\e f'']\|_2\\[1ex]
    &\leq K\|f\|_{H^{7/4}}+\|H[( a_{\tau,\delta}  -\delta )\pi_N^\e f'']\|_2\\[1ex]
     &  \leq K\|f\|_{H^{7/4}}+\|\chi_N^\e h\|_\infty \|(\pi_N^\e f)''\|_2
 \end{align*}
 and, since $h$ vanishes at infinity, we see that \eqref{D1+} holds also for $j=N$, provided that  $\e$ is sufficiently small.
 
 Since   $\cup_{\tau\in[0,1]} \overline{a_{\tau,\delta}(\R)}\subset[\eta,1/\eta]$ for some $\eta>0$, we may find a constant  $\kappa_0\geq1$  such that  
  \begin{align}
\label{14K+}
\kappa_0\|(\lambda+\alpha H\circ \p_x)[f]\|_{H^1}\geq  \lambda\|f\|_{H^1}+    \|f\|_{H^2}
\end{align}
for all $\alpha\in \cup_{\tau\in[0,1]} \overline{a_{\tau,\delta}(\R)}$,    $\lambda\in [1,\infty)$, and $f\in H^2(\R)$.
Choosing $\mu=1/2\kappa_0$ in \eqref{D1+}, it follows from \eqref{14K+} that for all $j\in\{-N+1,\ldots,N\}$, $\tau\in[0,1]$, $\lambda\in[1,\infty),$ and $f\in H^2(\R)$ we have
 \begin{align*}
  \kappa_0 \|\pi_j^\e (\lambda+B(\tau)) [ f]\|_{H^1}&\geq \kappa_0\|(\lambda+\bB_{j,\tau})[\pi^\e_j f]\|_{H^1}-   \|\pi_j^\e B(\tau) [ f]-\bB_{j,\tau}[\pi^\e_j f]\|_{H^1}\\[1ex]
  &\geq \frac{1}{2 } \|\pi_j^\e f\|_{H^2}+\lambda\|\pi_j^\e f\|_{H^1}-\kappa_0K\|  f\|_{H^{7/4}}
 \end{align*}
and, summing over $j\in\{-N+1,\ldots,N\}$,  we conclude together with Young's inequality and Remark~\ref{R:3}   that there exist positive constants $\omega_0$ and $C$ such that \eqref{EDY} holds for all
$\tau\in[0,1]$, $\lambda\in[\omega_0,\infty),$ and $f\in H^2(\R)$.
\end{proof}

We are now in the position to establish the desired generator property.
To this end we recall from \eqref{RTT3} that  the set $\cO$ introduced  in Section \ref{Sec0} is actually given by
\[
\cO=\Big\{f_0\in H^2(\R)\,:\, \inf_\R \Big[c_{\rho,\mu} +\frac{a_\mu}{\pi }B_{0,1}(f_0)[\oo_0] +\frac{a_\mu}{\pi }f_0'B_{1,1}(f_0)[f_0,\oo_0]\Big]>0\Big\},
\]
and we note that the continuity of the mappings 
\[\big[f\mapsto  B_{0,1}(f)[\oo(f)]\big],\big[f\mapsto f'B_{1,1}(f)[f,\oo(f)]\big]:H^2(\R)\to H^1(\R)\] 
ensure that $\cO$ is an open subset of $H^2(\R).$

\begin{thm}\label{TK3}
Given $f_0\in \cO$, it holds that  
 \begin{align*} 
  -\p\Phi(f_0)\in \kH(H^2(\R), H^1(\R)).
 \end{align*}
\end{thm}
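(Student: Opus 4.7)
The plan is to follow the blueprint of Theorem~\ref{TK2}: freeze coefficients via Theorem~\ref{T2}, compare with the Fourier-multiplier resolvent estimate \eqref{14K**}, and then close the argument by the method of continuity along the path $[\tau\mapsto \Psi(\tau)]:[0,1]\to\kL(H^2(\R),H^1(\R))$ that connects $\Psi(1)=\p\Phi(f_0)$ to the model operator $\Psi(0)=-\pi H[a_{\text{\tiny RT}}\p_x]$ of \eqref{DC}.

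First, I would use the Rayleigh-Taylor condition \eqref{RTT3} together with the embeddings $H^1(\R)\hookrightarrow L_\infty(\R)$ and $f_0'\in C_0(\R)$ to extract a constant $\eta>0$ such that $\eta\leq\alpha_\tau(x)\leq 1/\eta$ and $|\beta_\tau(x)|\leq 1/\eta$ uniformly in $x\in\R$ and $\tau\in[0,1]$. This places the frozen operators $\bA_{j,\tau}$ for $|j|\leq N-1$ in the class $\bA_{\alpha,\beta}$ covered by \eqref{14K**}; the boundary operator $\bA_{N,\tau}=-\pi c_{\rho,\mu}(-\p_x^2)^{1/2}$ is covered as well, since \eqref{RTT3} forces $c_{\rho,\mu}>0$ (the $H^1$-tail in $a_{\text{\tiny RT}}$ vanishes at infinity). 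Choosing $\mu=1/(2\kappa_0)$ in Theorem~\ref{T2}, combining \eqref{D1} with \eqref{14K**}, summing over $j$ via Remark~\ref{R:3}, and absorbing the remainder $K\|f\|_{H^{31/16}}$ by complex interpolation between $H^1(\R)$ and $H^2(\R)$ together with Young's inequality, I expect constants $\omega>1$ and $C>0$, independent of $\tau$, satisfying
\begin{equation*}
C\,\|(\lambda-\Psi(\tau))[f]\|_{H^1}\geq |\lambda|\,\|f\|_{H^1}+\|f\|_{H^2}
\end{equation*}
for all $\tau\in[0,1]$, $\re\lambda\geq\omega$, and $f\in H^2(\R)$.

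Second, I would anchor the invertibility at $\tau=0$. Writing $\pi a_{\text{\tiny RT}}=\pi c_{\rho,\mu}+h$ with $h\in H^1(\R)$ and $\inf_\R \pi a_{\text{\tiny RT}}>0$ by \eqref{RTT3}, Lemma~\ref{L:Isom} (applied to $\lambda-\Psi(0)=\lambda+\pi H[a_{\text{\tiny RT}}\p_x]$) provides, possibly after enlarging $\omega$, that $\omega-\Psi(0)\in {\rm Isom}(H^2(\R),H^1(\R))$. The continuity of $\tau\mapsto \Psi(\tau)$ together with the uniform resolvent bound above then permits the method of continuity, cf.\ \cite[Proposition~1.1.1]{Am95}, to propagate invertibility along the whole interval $\tau\in[0,1]$, and, in particular, to produce $\omega-\p\Phi(f_0)\in{\rm Isom}(H^2(\R),H^1(\R))$. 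Combined with the resolvent estimate at $\tau=1$ on the half-plane $\re\lambda\geq\omega$, a standard characterization of generators of strongly continuous analytic semigroups, cf.\ \cite{Am95,L95}, then yields $-\p\Phi(f_0)\in\kH(H^2(\R),H^1(\R))$.

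The hard part is the uniform resolvent bound. Unlike in the surface-tension case, where the leading frozen operator is a purely dissipative Fourier multiplier of order $3/2$, here the frozen operators $\bA_{j,\tau}$ have the mixed first-order symbol $-\alpha_\tau(x_j^\e)|\xi|+i\beta_\tau(x_j^\e)\xi$, so the drift $\beta_\tau\p_x$ is of the \emph{same} order as the dissipative part $-\alpha_\tau(-\p_x^2)^{1/2}$ and cannot be treated as a lower-order perturbation; it must be absorbed directly at the Fourier-multiplier level through \eqref{14K**}. This is precisely why Theorem~\ref{T2} needs to track both $\alpha_\tau$ and $\beta_\tau$ simultaneously, and why establishing its localization estimate is by far the most delicate step on which everything in this proof rests.
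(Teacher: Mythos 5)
Your proposal is correct and follows essentially the same route as the paper: the paper's proof of this theorem is exactly the combination of Theorem~\ref{T2}, the frozen-coefficient resolvent bound \eqref{14K**}, Remark~\ref{R:3}, and Lemma~\ref{L:Isom} (applied to $\omega-\Psi(0)=\omega+H[\pi a_{\text{\tiny RT}}\p_x]$ as the anchor for the method of continuity), arguing as in Theorem~\ref{TK2}. Your additional details — uniform bounds $\eta\leq\alpha_\tau\leq 1/\eta$, $|\beta_\tau|\leq 1/\eta$ from \eqref{RTT3}, positivity of $c_{\rho,\mu}$ for the $j=N$ operator, and absorption of the $K\|f\|_{H^{31/16}}$ remainder via interpolation and Young's inequality — are precisely the steps the paper leaves implicit.
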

\begin{proof}
 The proof follows from Theorem \ref{T2}, the relation \eqref{14K**}, Remark~\ref{R:3}, and Lemma~\ref{L:Isom}, by arguing as in the proof of Theorem~\ref{TK2}.
\end{proof}

Finally, we arrive at the  proof our second main result.
\begin{proof}[Proof of Theorem \ref{MT2}] 
Since  for $\sigma=0$ the problem \eqref{P} is equivalent to \eqref{Pest2}, the existence and uniqueness of a maximal solution to \eqref{Pest2}, for each $f_0\in \cO$, follows from \cite[Theorem~8.1.1]{L95}. 
We note that the relation \eqref{reg22} together with Theorem~\ref{TK3} ensures that
all the assumptions of \cite[Theorem 8.1.1]{L95} are satisfied. 
Furthermore,  $(iii)$  follows  from  \cite[Proposition~8.2.3 and Theorem~8.3.9]{L95}, and $(iv)$ from \cite[Proposition~8.2.1]{L95}.

In order to prove $(v)$,  we assume that $f=f(\,\cdot\,; f_0)\in B((0,T), H^{2+\e}(\R))$ for some $T<T_+(f_0)$ and $\e\in(0,1).$
Given $(\lambda_1,\lambda_2)\in(0,\infty)^2$, which we view below as  parameters in an associated nonlinear evolution problem, we define    the function 
\[
f_{\lambda_1,\lambda_2}(t,x):=f(\lambda_1 t,x+\lambda_2 t), \qquad   x\in\R, \, 0\leq t\leq T_{\lambda_1}:=T/\lambda_1,
\]
Since $f\in C([0,T],\cO)\cap C^1([0,T], H^1(\R))\cap C^\e_\e((0,T], H^2(\R))$, cf. Theorem~\ref{MT2}~$(ii)$, our assumption together with the translation invariance of $\cO$ implies that
\begin{align}\label{IQ1}
 f_{\lambda_1,\lambda_2}\in C([0,T_{\lambda_1}],\cO)\cap C^1([0,T_{\lambda_1}], H^1(\R))\cap C^\e_\e((0,T_{\lambda_1}], H^2(\R)). 
\end{align}
Therefore,  the  function $u:=f_{\lambda_1,\lambda_2}$
is a solution to  the nonlinear  evolution problem  
\begin{align}\label{QC}
\p_tu= \Psi(u,\lambda_1,\lambda_2) ,\quad t\geq0,\qquad u(0)=u_0,
\end{align}
for the initial data $u_0:=f_0$, where $ \Psi:\cO\times (0,\infty)^2\subset H^2(\R)\times \R^2\to  H^1(\R)$ denotes the    operator  
\begin{align*}
  \Psi(u,\lambda_1,\lambda_2):=\lambda_1\Phi(u)+\lambda_2\p_x u.
\end{align*}
Recalling \eqref{reg22}, we get $\Psi\in C^\omega(\cO\times (0,\infty)^2,  H^1(\R))$.
Moreover, given $(u_0,\lambda_1,\lambda_2)\in \cO\times (0,\infty)^2$,
the Fr\'echet derivative of $\Psi$ with respect to $u$ is given by
\[
\p_u\Psi(u_0,\lambda_1,\lambda_2)=\lambda_1\p\Phi(u_0)+\lambda_2\p_x.
\]
Since $\lambda_2\p_x$ is a  Fourier multiplier of first order with purely imaginary symbol,
we may revisit the   computations in Theorem~\ref{T2}, Lemma~\ref{L:Isom}, and Theorem~\ref{TK3}, to deduce that   $-\p_u\Psi((u_0,\lambda_1,\lambda_2))$ belongs to $\kH(H^2(\R), H^1(\R))$
for all $(u_0,\lambda_1,\lambda_2)\in \cO\times (0,\infty)^2.$
According to \cite[Theorem~8.1.1 and Theorem~8.3.9]{L95}, the  problem \eqref{QC} possesses for each $(u_0,\lambda_1,\lambda_2)\in \cO\times (0,\infty)^2$ a unique maximal solution $u=u(\,\cdot\,; u_0,\lambda_1,\lambda_2)$ 
(that satisfies similar properties as in Theorem~\ref{MT2}~$(i)-(ii)$), the  set 
$$\Omega:=\{(t,u_0,\lambda_1,\lambda_2)\,:\, (u_0,\lambda_1,\lambda_2)\in \cO\times (0,\infty)^2,\, 0<t<T_+((u_0,\lambda_1,\lambda_2))\}$$
is open and 
\[
[(t,u_0,\lambda_1,\lambda_2)\mapsto u(t; u_0,\lambda_1,\lambda_2)]:\0\to \cO\quad \text{is real-analytic.}
\]
Hence, for our special initial data $f_0$, it follows due to \eqref{IQ1} that $T_+(f_0,\lambda_1,\lambda_2)>T_{\lambda_1}$, and moreover $u(t; u_0,\lambda_1,\lambda_2)=f_{\lambda_1,\lambda_2}(t)$ for all $t\leq T_{\lambda_1}$.
Given  $t_0\in(0,T)$, we choose $\delta>0$ such that  $t_0<T_{\lambda_1}$ for all $(\lambda_1,\lambda_2)$ belonging to the disc $ D_\delta((1,1))$.
Hence, we    conclude  that in particular
\begin{align*}
[(\lambda_1,\lambda_2)\mapsto f_{\lambda_1,\lambda_2}(t_0)]:D_\delta((1,1))\to H^1(\R)
\end{align*}
is  a real-analytic map.
The conclusion is now immediate, see e.g. the proof of \cite[Theorem~1.3]{M16x}.
\end{proof}

 \vspace{0.5cm}
\hspace{-0.5cm}{ \bf Acknowledgements} 
The authors thank the anonymous referee for  the valuable suggestions which have improved the quality of the article.

\bibliographystyle{siam}
\bibliography{B}

\end{document}